\newcommand{\norm}[1]{\| #1\|}
\def \R{{\mathbb R}}
\def \Z{{\mathbb Z}}
\def \N{{\mathbb N}}
\newcommand{\prf}{{\begin{proof}}}
\newcommand{\epf}{{\end{proof}}}
\newtheorem{theo}{\sc Theorem}
\newtheorem{nota}{\sc Notation}
\newtheorem{lemma}{\sc lemma}
\newtheorem{cor}{\sc corollary}
\newtheorem{conj}[theo]{\sc Conjecture}
\theoremstyle{definition}
\def\bee{\begin{equation}}
\def\eee{\end{equation}}
\theoremstyle{rema}
\newcommand{\pdvr}[2]
{\dfrac{\partial^{#2} #1}{\partial \theta^{#2_1} \partial r^{#2_2}}}
\newcommand{\pdvrs}[2]
{\partial^{#2} #1 /\partial \theta^{#2_1} \partial r^{#2_2}}
\newtheorem{thm}{Theorem}
\newtheorem{question}{Question}
\newtheorem{definition}{Definition}
\numberwithin{equation}{section}
\author{Zhiyuan Zhang}
\address{Institut de Math\'ematiques de Jussieu - Paris Rive Gauche, B\^atiment Sophie Germain, Bureau 652, 75205 Paris Cedex 13, France}
\email{zzzhangzhiyuan@gmail.com}
\begin{document}

\title[Finitely Valued Potentials]{On the spectrums of ergodic Schrodinger operators with finitely valued potentials}

\date{\today}

\maketitle

\begin{abstract} 
We show that the Lebesgue measure of the spectrum of ergodic Schr\"odinger operators with potentials defined by non-constant function over any minimal aperiodic finite subshift tends to zero as the coupling constant tends to infinity. We also obtained a quantitative upper bound for the measure of the spectrum. This follows from a result we proved for ergodic Schr\"odinger operators with  potentials generated by aperiodic subshift under two conditions on the recurrence property of the subshift. We also show that one of these conditions is necessary for such result.
\end{abstract}

\tableofcontents
\addtocontents{toc}{\protect\setcounter{tocdepth}{1}}


\section{Introduction}

This paper is motivated by Simon's subshift conjecture ( in \cite{S}, see also \cite{D} ) and the desire to get a better understanding of recently discovered counter-examples in \cite{ADZ}. Consider an aperiodic strictly ergodic subshift over a finite alphabet, which is assumed to consist of real numbers for simplicity, consider the Schr\"odinger operators in $\ell^2(\Z)$ with potentials given by the elements of the subshift. By minimality, the spectrum is the same for every element in the subshift. The common spectrum was suspected to be of zero Lebesgue measure. For CMV matrices, Barry Simon conjectured the following in \cite{S}.

\begin{conj}
Given a minimal subshift of Verblunsky coefficients which is not periodic, the common essential support of the associated measures has zero Lebesgue measure.
\end{conj}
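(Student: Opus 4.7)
The strategy is to combine Kotani theory for Verblunsky coefficients with a Gordon-type argument that exploits the minimal aperiodic structure. The aim is to show that the spectrum $\Sigma$ is contained in the set $Z = \{z \in \partial \mathbb{D} : L(z) = 0\}$ on which the Lyapunov exponent vanishes, and then to invoke a zero-measure statement for $Z$.

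First I would apply Kotani's theorem in its CMV incarnation (Simon, after Kotani's original Schr\"odinger work): for any ergodic finitely-valued Verblunsky sequence that is not a.s.\ periodic, the set $Z$ has zero Lebesgue measure. This by itself delivers $\sigma_{\rm ac} = \emptyset$ and reduces the conjecture to proving that $\Sigma \setminus Z$ is Lebesgue null as well. To close this gap I would run two parallel attacks. The first is a Gordon-type argument: using minimality, every finite subword of the sample sequence recurs with bounded gap, and one would pigeonhole these returns to produce, along every orbit, arbitrarily long cubed or near-cubed subwords; combined with the algebraic form of the Szeg\H{o} cocycle, such repetitions yield at most polynomial bounds on transfer matrix products at any $z \in \Sigma$, forcing $L(z) = 0$ at spectral $z$. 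The second is a periodic approximation: for each $n$ one periodises a long subword to obtain a periodic Verblunsky sequence with band spectrum $\Sigma_n$ whose total Lebesgue measure is bounded above by a discriminant/Chebyshev estimate; using minimality to ensure $\Sigma \subset \Sigma_n$ (up to boundary effects that shrink in $n$) and aperiodicity to push the sum of band widths to zero, one concludes $|\Sigma| = 0$.

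The decisive difficulty lives in both attacks simultaneously. For the Gordon route, aperiodic minimality alone guarantees only that subwords recur with bounded gap; to produce a cubed subword of every prescribed length one needs a quantitative return-time control beyond minimality, of the sort encoded by Boshernitzan's condition or linear repetitivity, and extracting such control from pure minimality is exactly what has resisted all prior attempts. For the periodic-approximation route, the inclusion $\Sigma \subset \Sigma_n$ requires the $\ell^\infty$ error between $V$ and $V^{(n)}$, measured over a window comparable to the inverse band gap, to be smaller than the band gap itself, and without a recurrence hypothesis one cannot rule out long mismatched stretches that force the true spectrum to escape the approximating bands. A successful proof of the conjecture as stated must therefore extract a purely combinatorial consequence of aperiodic minimality --- perhaps via subword complexity, Rauzy graph valence, or bounded ratios of return times --- that is strong enough to make at least one of these two mechanisms work uniformly over all minimal aperiodic subshifts. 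Identifying such a mechanism is the principal obstacle, and any plausible path to the conjecture must confront it head-on.
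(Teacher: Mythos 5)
The statement you are trying to prove is a \emph{conjecture} quoted in the introduction, not a result of the paper; and in fact the paper's whole point of departure is that this conjecture is \emph{false}. Avila, Damanik and Zhang \cite{ADZ} constructed, for every finite alphabet, a minimal aperiodic subshift whose associated CMV spectrum has strictly positive Lebesgue measure (Theorem 2 in \cite{ADZ}; the Schr\"odinger analogue is Theorem \ref{1} above). So no proof proposal can succeed, and any argument that appears to succeed must contain an error.

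To your credit, you have put your finger on exactly where the would-be proof breaks, and why it cannot be patched: minimality alone gives bounded-gap recurrence of each fixed finite word, but it gives no control of the ratio between the length of a word and its return time, and no cube (or near-cube) repetition on a definite scale. Both the Gordon route and the periodic-approximation route require precisely this kind of quantitative repetitivity (Boshernitzan's condition, linear repetitivity, or similar), and that is why the positive results \cite{DL}, \cite{DL2} are stated under such hypotheses. But the conclusion to draw is not ``the missing combinatorial lemma is the principal obstacle''; it is that the missing lemma does not exist. The counterexamples in \cite{ADZ} are built by iteratively inserting long runs $w^{l}$ of a period block $w$ at rapidly growing scales, exactly to defeat every Gordon/Boshernitzan-type mechanism: the periodic approximants converge so well that a positive-measure set of energies survives in every approximating band spectrum, while the subshift stays minimal and aperiodic. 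The correct replacement for the conjecture, and what this paper actually proves (Theorems \ref{minimal cor} and \ref{main theorem}), is quantitative: the spectrum can have positive measure, but under a uniform bound on block repetitions its measure is $O(\lambda^{-\gamma})$ in the coupling $\lambda$, for any $\gamma<\frac14$. You should abandon the attempt to prove the conjecture and instead study why your two mechanisms fail on the ADZ examples; that failure is the content of the theory, not a gap to be filled.
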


 There is also a Schr\"odinger version of the subshift conjecture ( see \cite{ADZ} ),
 \begin{conj}
 Given $\mathcal{A} \subset \R$ finite and a minimal subshift $\Omega \subset \mathcal{A}^{\Z}$ which is not periodic, the associated common spectrum has zero Lebesgue measure.
 \end{conj}
 
  It has been shown that for strictly ergodic subshifts satisfying the so-called Boshernitzan condition, the Schr\"odinger operators have zero-measure spectrum for any non-constant potentials \cite{DL}, and for CMV matrices, one has zero-measure supports \cite{DL2}. More results on subshifts associated operators can be found in \cite{D}.

In the recent work of Avila, Damanik and Zhang \cite{ADZ}, the subshift conjecture is shown to be false, for both Schr\"odinger version and the orginal version for CMV matrices. In fact, the authors proved the following theorem for Schr\"odinger operators ( Theorem 1 in \cite{ADZ} ) 
\begin{thm} \label{1}
Given $\mathcal{A} \subset \R$ with $2\leq card \mathcal{A} < \infty$, there is a minimal subshift $\Omega \subset \mathcal{A}^{\Z}$ which is not periodic, such that the associated spectrum $\Sigma \subset \R$ has strictly positive Lebesgue measure.
\end{thm}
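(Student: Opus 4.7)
The plan is to construct $\Omega$ as the limit of a sequence of periodic subshifts whose spectra share a set of positive Lebesgue measure. As a preliminary reduction, it suffices to treat the case $\mathcal{A}=\{0,\lambda\}$ for some $\lambda>0$: a two-symbol example sitting inside a larger alphabet trivially provides the desired counterexamples.

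The core is an inductive scheme producing finite words $W_n\in\{0,\lambda\}^*$ of lengths $\ell_n\to\infty$ such that (i) $W_{n+1}$ is a concatenation of many copies of $W_n$ interspersed with a small number of carefully placed ``defects''; (ii) every factor of $W_n$ recurs in $W_{n+1}$ with bounded gap, forcing minimality in the limit; and (iii) the spectra $\Sigma_n$ of the $\ell_n$-periodic Schr\"odinger operators with potential $W_n^{\infty}$ satisfy $|\Sigma_n|\geq c>0$ and $|\Sigma_n\setminus\Sigma_{n+1}|\leq\varepsilon_n$ with $\sum_n\varepsilon_n<\infty$. The orbit closure of any subsequential pointwise limit of the $W_n^{\infty}$'s yields a subshift $\Omega$ that is minimal and aperiodic by construction. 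A Borel--Cantelli-type estimate $\left|\bigcap_{n\geq N}\Sigma_n\right|\geq |\Sigma_N|-\sum_{n\geq N}\varepsilon_n$ then produces a set of positive Lebesgue measure contained in $\Sigma_n$ for all sufficiently large $n$; continuity of the spectrum of periodic Jacobi operators under coefficient convergence, combined with strict ergodicity of $\Omega$, identifies this set with a subset of the common spectrum $\Sigma$.

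The crucial analytic ingredient is (iii). For a period-$\ell_n$ potential, $E\in\Sigma_n$ iff $|\mathrm{tr}\,T_{W_n}(E)|\leq 2$, where $T_{W_n}(E)\in SL(2,\R)$ is the monodromy matrix over one period. When $W_{n+1}$ is built from many copies of $W_n$ interspersed with short defects, $T_{W_{n+1}}(E)$ decomposes as a product in which most factors are powers $T_{W_n}(E)^{k}$, interleaved with a bounded number of ``defect'' matrices of controlled norm. On the elliptic part of $\Sigma_n$ the matrix $T_{W_n}(E)^k$ is conjugate to a rotation by $k$ times the rotation angle $\rho_n(E)$, and a careful choice of defect spacings according to a Diophantine or KAM-type schedule ensures that these rotations do not conspire with the defect perturbations to force $|\mathrm{tr}\,T_{W_{n+1}}(E)|>2$, except for $E$ in a set of measure at most $\varepsilon_n$.

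The main obstacle is precisely this KAM-type step: the defects and their spacings must be engineered simultaneously to satisfy both (a) aperiodicity and minimality of the limit, which require that the defects genuinely break $\ell_n$-periodicity and that every factor of $W_n$ is syndetically covered by $W_{n+1}$, and (b) summable spectral loss, which requires defects to be short and placed non-resonantly with respect to the rotation angle $\rho_n(E)$ across a positive-measure subset of $E$. Balancing these competing constraints is the technical heart of the argument; once it is achieved, passage to the limit and identification of the common spectrum are by comparison routine.
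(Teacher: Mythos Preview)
Your high-level strategy matches the paper's (and ADZ's): build a nested sequence of periodic approximants with summable spectral loss, pass to a limit subshift, and use a Borel--Cantelli estimate to extract a positive-measure set in the limit spectrum. But the way you propose to realise the crucial analytic step (iii) is both different from ADZ and not actually carried out.

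The ADZ construction, which this paper reproduces in Section~\ref{folklore theorems} to prove the closely related Theorem~\ref{positive measure spectrum for large couplings theorem}, does \emph{not} use a single word $W_n$ per level. It keeps a finite \emph{collection} $S_n=\{w_{n,1},\dots,w_{n,k_n}\}$ and forms
\[
S_{n+1}=\{\,w_{n,1}w_{n,2}\cdots w_{n,k_n}\,w_{n,k}^{\,l}\;:\;1\le k\le k_n,\ N_n\le l<N_n+N_n^{\epsilon/2}\,\}.
\]
Each new word is a short fixed prefix followed by a very long power of a single old word. The spectral control then comes from a soft fact (Lemma~1 in \cite{ADZ}): as $l\to\infty$, the spectrum of the periodic operator with period $w_{n,1}\cdots w_{n,k_n}w_{n,k}^{\,l}$ approaches the spectrum of the $w_{n,k}$-periodic operator, simply because the prefix occupies a vanishing fraction of the period. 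Taking the union over $k$ recovers almost all of $\Sigma_n=\bigcup_k\Sigma(w_{n,k})$. No rotation-number analysis, no Diophantine scheduling, no KAM step is needed; one just chooses $N_n$ large enough. Minimality and aperiodicity come from the common prefix $w_{n,1}\cdots w_{n,k_n}$ and the variable exponent~$l$.

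Your single-word scheme with ``defects'' forces you to perturb a genuinely elliptic product and then argue that $|\mathrm{tr}\,T_{W_{n+1}}(E)|\le 2$ survives for most $E$. You correctly flag this as the technical heart, but you do not prove it; the phrase ``a careful choice of defect spacings according to a Diophantine or KAM-type schedule'' is a placeholder, not an argument. Making this work would require controlling the rotation number $\rho_n(E)$ simultaneously for a positive-measure set of energies across all scales, which is substantially harder than the ADZ mechanism and not obviously feasible with defects that must also ensure aperiodicity. The ADZ trick of using many words per level and long pure powers sidesteps the whole difficulty: replace your KAM step with that construction and the proof goes through essentially as in Section~\ref{folklore theorems}.
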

They also proved a CMV matrices analog ( Theorem 2 in \cite{ADZ} ) which disproved the subshift conjecture in its original formulation.

In \cite{ADZ}, the authors also proved a positive result roughly saying that when the system endowed with an ergodic invariant measure is relatively simple, the associated density of states measure is purely singular. The precise condition is formulated as being "almost surely polynomially transitive" and "almost surely of polynomial complexity".  This theorem works for subshifts generated by translations on tori with Diophantine frequencies, certain skew shifts and interval exchange transformations. Note that this theorem does not imply that the measure of the spectrum is zero.

Given this new phenomenon,  namely that subshift generated potentials can give positive-measure spectrum, the following question arises naturally.

\begin{question}
Given a minimal aperiodic subshift and a non-constant potential function,
how large can the Lebesgue measure of the spectrum be ? 
\end{question}

This paper is an attempt to study this question. The main result is the following.

\begin{thm} \label{minimal cor}
Given any $k \geq 2$, a minimal aperiodic subshift $\Omega \subset \{1, \cdots, k\}^{\Z}$. Then for any $0 < \gamma < \frac{1}{4}$ the following is true. For any non-constant function $v: \{1,\cdots,k\}\to \R$, there exists $C>0$, such that for any $\lambda > 0$, the Lebesgue measure of the spectrum of the Schr\"odinger operator with potential $\lambda v$ is smaller than $C\lambda^{-\gamma}$.
\end{thm}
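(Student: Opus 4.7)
The plan is to reduce Theorem~\ref{minimal cor} to the abstract quantitative spectral estimate announced in the abstract, which applies to ergodic subshifts satisfying two explicit recurrence hypotheses, and then to verify those hypotheses for every minimal aperiodic subshift on a finite alphabet. The proof thus has two pieces: (1) a symbolic-dynamics argument showing that any minimal aperiodic $\Omega\subset\{1,\dots,k\}^{\Z}$ fulfils the two recurrence conditions; (2) an appeal to the quantitative estimate to conclude $|\Sigma(\lambda v)|\le C\lambda^{-\gamma}$ for any non-constant $v$ and any $\gamma<1/4$.

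For step (1), I would use the $S$-adic (Rauzy--Vershik) representation available for any minimal subshift on a finite alphabet: iterating first-return maps yields a sequence of return-word substitutions $\sigma_n$ whose compositions generate $\Omega$, and aperiodicity forces the tower heights to grow without bound. I expect the two recurrence conditions to amount to (a) a bounded-gap control on the occurrences of any factor $w$ in terms of its length, which follows from minimality together with pigeonhole over the finite set of return words at each scale, and (b) the existence, at arbitrarily large scales, of factors of the form $uu$ (or higher powers) with controlled length. Condition (b) is the place where aperiodicity is essential: a non-constant substitution must eventually generate squares, since otherwise the subshift degenerates to a periodic fixed point. Finiteness of the alphabet provides the compactness needed to make the constants in (a) and (b) uniform across scales.

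For step (2) the abstract estimate is proved by transfer-matrix/periodic-approximation techniques. Let $M_n(E,\lambda)$ denote the transfer matrix over a periodic approximant of length $n$ built from the recurrence data; then $\Sigma(\lambda v)\subset\{E:\|M_n(E,\lambda)\|\le C\}$. Whenever the approximant contains a square $uu$, the identity $M_{uu}=M_u^2$ together with Cayley--Hamilton yields $\mathrm{tr}\, M_{uu}=(\mathrm{tr}\, M_u)^2-2$, so the trace polynomial factors through iteration in a Gordon/Kotani-type way. A Chebyshev-type estimate on the Lebesgue measure of $\{E:|\mathrm{tr}\, M_u(E,\lambda)|\le 2\}$, combined with the exponential growth of $\|M_u\|$ forced by large $\lambda$ off that set, produces the desired polynomial decay once the scale $n$ is optimised as a function of $\lambda$.

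The principal obstacle is the \emph{quantitative} exponent $\gamma<1/4$: proving merely that the measure tends to zero is conceptually close to the Boshernitzan-type results of \cite{DL}, but extracting a polynomial rate requires the two recurrence conditions to deliver a sharp numerical balance between the length of the periodic approximant, the density of squared factors, and $\log\lambda$. The $1/4$ suggests two successive square-root losses: one from the Chebyshev bound on the trace polynomial and one from matching periodic approximants to the geometric growth of tower heights in the $S$-adic representation. Making this balance uniform over \emph{all} minimal aperiodic subshifts, rather than for a specific substitutive or Sturmian example, is the heart of the argument and is the reason an abstract recurrence framework is needed in the first place.
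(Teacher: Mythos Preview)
Your reduction strategy is right in spirit—the paper also deduces Theorem~\ref{minimal cor} from a more general result (Theorem~\ref{main theorem}) applying to any ergodic subshift satisfying two explicit conditions, and the only work in passing from Theorem~\ref{main theorem} to Theorem~\ref{minimal cor} is checking that minimal aperiodic subshifts satisfy both. But you have misidentified the conditions. The paper's condition~(1) is that there exists $K$ with $\mu(\{\omega:\omega_0=\cdots=\omega_{K-1}\})=0$, i.e.\ a \emph{uniform bound on the length of constant blocks}; condition~(2) is that every letter occurs in every window of length $L$. Both follow immediately from minimality and aperiodicity: minimality gives~(2) by compactness, and if~(1) failed the constant sequence would lie in $\Omega$, contradicting aperiodicity of a minimal system. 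Your condition~(b), the existence of large squares $uu$, is neither stated nor used, and in fact fails outright for square-free minimal subshifts, which exist over any alphabet of size $k\geq 3$.

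More seriously, your sketch for step~(2)—periodic approximants, Gordon-type exploitation of squares, Chebyshev bounds on trace polynomials—is not how the paper proves the quantitative estimate and could not succeed in this generality. The whole point of the paper is that the subshift conjecture is false (Theorem~\ref{1}, and the paper's own Theorem~\ref{positive measure spectrum for large couplings theorem}): there are minimal aperiodic subshifts, even of arbitrarily low complexity, whose spectrum has positive measure for \emph{every} coupling in a countable set. Any mechanism that tends toward zero-measure spectrum, as Gordon/Kotani arguments do, is therefore blocked. The paper instead runs a Benedicks--Carleson multi-scale parameter exclusion: one builds a tower of induced return maps on nested sets $\Delta_n$, forms accelerated cocycles $B_n^E$, and inductively removes small energy sets $J_n$ to maintain a quantitative transversality of the most expanding/contracting directions at each scale (Lemma~\ref{lemma proceed induction}, Lemma~\ref{lemma choosing the parameters}). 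For energies outside $\overline{J}=\overline{\bigcup_n J_n}$ one then uses Berezansky's theorem on polynomially bounded generalized eigenfunctions to show that no such eigenfunction can exist (Lemma~\ref{lemma spectrum}), whence $\Sigma\cap I_{E_0}\subset\overline{J}$. The exponent $\gamma<\tfrac14$ comes from balancing the size $\kappa_n\sim\bar\lambda_n^{-\eta_n}$ of the excluded intervals against the hyperbolicity gained per step, together with the constraint $\kappa_n>\bar\lambda_n^{-1/4}$ needed in Lemma~\ref{main lemma}; it is not a Chebyshev/square-root loss. The $S$-adic and squares picture does not enter at any point.
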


We actually proved the following more general result for ergodic Schr\"odinger operators with shift-generated potentials

\begin{thm} \label{main theorem}
Given any $k\geq 2$, an aperiodic subshift $\Omega \subset \{1,\cdots, k\}^{\Z}$ endowed with an ergodic shift invariant measure $\mu$, such that : (1) there exists an integer $K > 0$ such that $\mu(\{ \omega ; \omega_0 = \omega_1 = \cdots = \omega_{K-1} \}) = 0$; (2) there exists an integer $L > 0$ such that for any $1\leq i \leq k$, any $\omega = ( \omega_p)_{p \in \Z} \in supp \mu$, there exists $0 \leq j \leq L-1$ such that $\omega_j = i$. Then for any $0 < \gamma < \frac{1}{4}$, there exists a constant $C > 0$, such that for any non-constant function $v : \{1, \cdots, k\} \to \R$, denote $\lambda = \min(|v(i)-v(j)|; 1\leq i < j \leq k)$, then $Leb(\Sigma_{v}) < C \lambda^{-\gamma}$. Here $\Sigma_{v}$ denotes the almost sure spectrum with potential $v$.
\end{thm}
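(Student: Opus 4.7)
The approach I would take is to control, for each $E \in \R$, the uniform hyperbolicity of the $\text{SL}(2,\R)$-valued transfer cocycle $(\sigma, A_E)$ over $(\text{supp}(\mu), \mu)$, where $A_E(\omega) = T_{\omega_0}(E)$ and $T_i(E) = \begin{pmatrix} E - v(i) & -1 \\ 1 & 0 \end{pmatrix}$. By Johnson's theorem, $\Sigma_v$ is exactly the set of $E$ at which this cocycle fails to be uniformly hyperbolic, so the plan is to bound the Lebesgue measure of that set by $C \lambda^{-\gamma}$. As a first reduction, since $\lambda = \min_{i \neq j}|v(i) - v(j)|$, for each $E$ at most one index $i_E$ can satisfy $|E - v(i_E)| < \lambda/2$; if no such $i_E$ exists then every matrix $T_{\omega_n}(E)$ has $|\text{tr}\, T_{\omega_n}(E)| \geq \lambda/2$, the matrices share a common invariant cone, and the cocycle is uniformly hyperbolic. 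Hence for $\lambda$ large $\Sigma_v \subset \bigcup_i I_i$ with $I_i = [v(i)-2, v(i)+2]$, and it suffices to estimate $\text{Leb}(\Sigma_v \cap I_i)$ for each fixed $i$.

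Fix $i$ and restrict to $E \in I_i$. Call a step $n$ \emph{elliptic} if $\omega_n = i$ (so $|\text{tr}\, T_{\omega_n}(E)| \leq 2$) and \emph{hyperbolic} otherwise (so $|\text{tr}\, T_{\omega_n}(E)| \geq \lambda - 2$). Condition (1) bounds the length of each elliptic run by $K - 1$, and condition (2) bounds the gap between two consecutive elliptic steps by $L$. Every $\omega \in \text{supp}(\mu)$ then decomposes canonically into return blocks of bounded length, each consisting of a hyperbolic run of at most $L$ steps adjacent to an elliptic run of length $\ell' \leq K-1$. The hyperbolic part of such a block expands the dominant direction by a factor of order $\lambda^{\ell}$ (where $\ell$ is the number of hyperbolic steps), while the elliptic part rotates that direction by an angle $\ell' \theta_i(E)$, with $\theta_i(E) = \arccos((E - v(i))/2)$ a real-analytic diffeomorphism of $I_i$ onto $[0, \pi]$ having bounded, non-vanishing derivative.

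For the cocycle to fail uniform hyperbolicity at some $E \in I_i$ the rotations supplied by elliptic runs must conspire, across arbitrarily long words, with the expansion directions of the hyperbolic runs so as to keep $\|M_N(\omega, E)\|$ from growing exponentially. Each additional block imposes a near-resonance constraint pinning the direction of expansion to one of $O(1)$ specific angles, and the derivative bound on $\theta_i$ translates this into a set of $E$ of Lebesgue measure $O(\lambda^{-1})$. The central step of the proof is to upgrade this one-block estimate to $\text{Leb}(\Sigma_v \cap I_i) \leq C \lambda^{-\gamma}$ via a multiscale argument: at each new scale one either verifies uniform hyperbolicity of the corresponding block cocycle on most of the remaining set, or passes to a still smaller set of $E$ that is "still resonant" at that scale. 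The key combinatorial input is linear recurrence, which follows from condition (2) and ensures that the number of words of length $N$ occurring in $\Omega$ grows at most polynomially in $N$, so the union over the resonant conditions available at any given scale contributes only a polynomial factor.

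The main obstacle will be carrying out this multiscale estimate so that the final exponent can be taken arbitrarily close to $1/4$. The exponent $1/4$ looks like the fixed point of a renormalization in which each doubling of scale costs only a square-root factor $\lambda^{-1/2}$ rather than $\lambda^{-1}$, because resonant directions produced by consecutive blocks are not independent: an elliptic run can partially undo an alignment set up by a previous hyperbolic block, so only a quadratic correction survives. Tracking these correlations through the cone geometry of $\text{SL}(2, \R)$, and balancing the analytic loss of $\lambda^{-1/2}$ per scale against the polynomial combinatorial gain, is where I expect the heart of the argument to lie. A secondary difficulty is checking that the block decomposition and the ensuing estimates hold $\mu$-almost surely — so that they translate into the uniform hyperbolicity criterion via Johnson's theorem — given only ergodicity of $\mu$ together with the bulk combinatorial conditions (1) and (2), rather than any stronger structure such as unique ergodicity or a substitution.
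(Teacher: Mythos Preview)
Your initial reduction via Johnson's theorem and the localization of $\Sigma_v$ to neighborhoods $I_i$ of the values $v(i)$ matches the paper. The block decomposition into hyperbolic runs of length at most $L$ and elliptic runs of length at most $K-1$ is also the right starting picture. But two points in your plan are genuine gaps.

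First, your claimed ``key combinatorial input'' is false. You assert that condition (2) gives linear recurrence and hence polynomial word complexity. Condition (2) only says that every letter occurs in every window of length $L$; it says nothing about return times of longer cylinders, and it certainly does not force $p(N)$ to be polynomial. A subshift can satisfy (1) and (2) and still have exponentially many words of length $N$. The paper in fact advertises the result as the first bound of this type \emph{without any complexity assumption}, so any argument that routes through a complexity bound cannot be the right one. If you try to sum the measures of resonant sets over all admissible words at scale $N$, you will in general pick up an exponential factor that swamps your $\lambda^{-\gamma}$ gain.

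Second, and related, you aim to prove uniform hyperbolicity directly for $E$ outside a small bad set. Uniform hyperbolicity is a statement about \emph{every} $\omega\in\Omega$, and with only ergodicity plus (1)--(2) you do not have enough uniformity over $\omega$ to close such an argument (this is your ``secondary difficulty'', but it is in fact primary). The paper sidesteps both problems at once. Instead of uniform hyperbolicity, it invokes Berezansky's theorem: spectrally almost every $E$ admits a polynomially bounded generalized eigenfunction. It then builds, via Rokhlin's lemma, a nested tower $\Delta_0\supset\Delta_1\supset\cdots$ with tightly controlled return times $r_n\in\{N_n,N_n+1\}$, and runs a Benedicks--Carleson parameter exclusion on the induced cocycles. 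The crucial estimate is that the excluded set $J_n$ at level $n$ is $\zeta_n$-close (in the angle parameter) to the excluded set at level $0$, because the most-expanded/contracted directions barely move between scales; this collapses the whole union $\bigcup_n J_n$ back to a set of measure $O(\lambda^{-\gamma})$ indexed by the \emph{finitely many} words in $\mathcal A_0$, with no complexity count needed. For $E\notin\overline{J}$ one then exhibits a positive-measure set $\Omega'\subset\Omega$ on which the cocycle grows exponentially along a subsequence in both time directions, ruling out polynomially bounded solutions and hence (via Berezansky) showing $E\notin\Sigma_v$. The exponent $\tfrac14$ arises from the transversality hypothesis $\kappa_n>\bar\lambda_n^{-1/4}$ needed in the two-block concatenation lemma, not from a square-root renormalization of the sort you describe.
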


In fact, we will prove a better bound for the exponent $\gamma$ based on more detailed knowledge of the recurrence property of the subshift.

Since any minimal subshift $\Omega$, any ergodic shift invariant measure $\mu$ on $\Omega$ satisfy condition (1),(2) in Theorem \ref{main theorem}, Theorem \ref{minimal cor} follows as an immediate corollary.

To the best of the author's knowledge, this result seems to be the first non-trivial upper bound for the Lebesgue measure of the spectrum for this class of Schr\"odinger operators without any complexity bound assumption.

We note that if one only assumes the conditions of Theorem \ref{main theorem}, one cannot hope to prove zero-measure spectrum for all sufficiently sparse potentials. In fact we have the following theorem which is a slight modification of Theorem 1 in \cite{ADZ}.

\begin{thm} \label{positive measure spectrum for large couplings theorem}
Given any $k \geq 2$, $\epsilon > 0$, any countable subset $B$ of non-constant functions from $\mathcal{A}$ to $\R$.There exists $C > 0$, a minimal aperiodic subshift $\Omega \subset \{1,\cdots, k\}^{\Z}$ with complexity function $p$ satisfying $p(n) < Cn^{1 + \epsilon}, \forall n \in \N$, such that for any $v \in B$,the Schr\"odinger operator with potential $v$ has spectrum of strictly positive Lebesgue measure.
\end{thm}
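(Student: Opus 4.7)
The plan is to revisit the construction of \cite{ADZ} that yields Theorem \ref{1} and modify it in two directions: first, so that a single subshift serves uniformly for every $v \in B$; and second, so that the complexity is dominated by $Cn^{1+\epsilon}$. The construction in \cite{ADZ} proceeds inductively by selecting a sequence of nested blocks $w_n \in \{1,\dots,k\}^{\ast}$, the final subshift being the orbit closure of a limit sequence built from the $w_n$. At step $n$ one chooses $w_n$ subject to two kinds of conditions: (a) spectral conditions saying that the periodic Schr\"odinger operator associated with $w_n$ keeps a definite portion of its spectrum, when transported across scales, inside the limit spectrum $\Sigma_v$; and (b) combinatorial conditions ensuring minimality and aperiodicity of the limit.

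First I would enumerate $B = \{v^{(1)}, v^{(2)}, \dots\}$ and, at the $n$-th step of the inductive construction, impose the spectral condition of \cite{ADZ} simultaneously for the finitely many potentials $v^{(1)}, \dots, v^{(n)}$. Each of those conditions is an open, satisfiable constraint on the parameters governing $w_n$, and the ADZ scheme has enough flexibility (a large reservoir of admissible blocks at each scale) to accommodate any finite intersection of them. This standard diagonalization produces a single minimal aperiodic subshift $\Omega$ for which $\mathrm{Leb}(\Sigma_{v^{(i)}}) > 0$ for every $i \geq 1$.

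Second, for the complexity bound I would make the block lengths $q_n = |w_n|$ grow very fast: arrange $q_{n+1} \geq q_n^{1+\epsilon/2}$, and choose $w_{n+1}$ as an almost purely periodic concatenation of copies of $w_n$, inserting only a bounded number of prescribed defects to destroy periodicity. A standard counting for such an $S$-adic hierarchy bounds the number of factors of length $\ell$ by $C \ell \cdot N_\ell$, where $N_\ell$ counts the defect positions visible at the unique scale $n$ with $q_n \asymp \ell$. With the growth rate above, $N_\ell$ can be made $\leq \ell^{\epsilon/2}$, yielding $p(\ell) \leq C \ell^{1+\epsilon}$. Minimality and aperiodicity are preserved because the defects occur syndetically yet with vanishing density.

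The main obstacle will be verifying that the spectral conditions from \cite{ADZ} remain satisfiable under the tighter combinatorial budget of Step two, i.e., when essentially all of $w_{n+1}$ is forced to be periodic of period $q_n$. Concretely, one must check that the positive lower bound on the Lebesgue measure of the spectrum at each scale does not deteriorate as $q_{n+1}/q_n \to \infty$, for any of the potentials $v^{(1)}, \dots, v^{(n)}$ simultaneously. I expect this to reduce, as in \cite{ADZ}, to a transfer-matrix/trace estimate where the surviving spectrum is controlled by finitely many defect patterns; since making $w_{n+1}$ more periodic only brings the spectrum closer to that of the periodic operator with period $w_n$, the ADZ mechanism becomes easier rather than harder, and a careful quantitative tracking of its constants should yield the theorem.
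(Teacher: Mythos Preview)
Your proposal is correct and follows essentially the same approach as the paper: enumerate $B=\{v_1,v_2,\dots\}$ and at stage $n$ impose the ADZ spectral-persistence condition simultaneously for $v_1,\dots,v_n$, while controlling complexity by letting each new block be a fixed short prefix followed by a long power $w_{n,k}^{\,l}$ with $N_n\le l<N_n+N_n^{\epsilon/2}$, so that the number of level-$n$ words satisfies $|S_n|\lesssim M_n^{\epsilon/2}$ and a four-case factor count gives $p(L)\lesssim |S_n|^2 L\lesssim L^{1+\epsilon}$. The paper keeps a \emph{set} $S_n$ of words at each scale rather than a single $w_n$, and the ``defects'' are realized concretely as the prefix $w_{n,1}\cdots w_{n,k_n}$ together with the choice of $(k,l)$, but otherwise your outline matches the paper's proof.
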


Here for any $n \geq 1$, the complexity function $p(n)$ denote the number of different words of length $n$ appeared in the subshift. This notion can also be found in many literatures on Schr\"odinger operators with shift-generated potentials, for example \cite{ADZ}, \cite{D} and \cite{DL}.

We also note that the condition (1) in Theorem \ref{main theorem} is necessary to ensure that the measure of the spectrum tends to zero as the "sparseness" of the potential function grows to infinity. This is seen from the following theorem, which seems to be folklore.

\begin{thm}\label{unbounded repetitions theorem}
Given any $k\geq 2$, a subshift $\Omega \subset \{1, \cdots, k\}^{\Z}$, an ergodic shift invariant measure $\mu$ such that there exists $i \in \{1,\cdots, k\}$ such that for any integer $N>0$, $\mu( \{ \omega ; \omega_0 = \omega_1 = \cdots = \omega_{N-1} = i \}) > 0$. Then for any function $v: \{1, \cdots, k\} \to \R$, we have $[-2+v(i),2+v(i)] \subset \Sigma_{v}$. Here $\Sigma_{v}$ denotes the almost sure spectrum with potential $v$.
\end{thm}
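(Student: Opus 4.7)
The plan is to produce, for every $E \in [-2+v(i),\, 2+v(i)]$, a Weyl sequence for the operator $H_\omega \psi(n) = \psi(n+1) + \psi(n-1) + v(\omega_n)\psi(n)$, valid for $\mu$-almost every $\omega$. First I would use the hypothesis $\mu(\{\omega : \omega_0 = \cdots = \omega_{N-1} = i\}) > 0$ for every $N$ together with the Poincaré recurrence theorem (or directly Birkhoff) to conclude that, for $\mu$-a.e. $\omega$ and every $N \in \N$, there exists $n_N \in \Z$ with $\omega_{n_N} = \omega_{n_N+1} = \cdots = \omega_{n_N + N-1} = i$. On this block of length $N$ the operator $H_\omega$ acts as the discrete Laplacian plus the constant $v(i)$.

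Next, for each such block, for $k \in \{1,\dots,N\}$, I would take the finite-box Dirichlet eigenfunctions: $\psi_{N,k}$ supported on $\{n_N, n_N+1, \dots, n_N + N-1\}$ with $\psi_{N,k}(n_N + j - 1) = \sin(\pi k j / (N+1))$ for $j=1,\dots,N$, and zero elsewhere. The identity $2\cos(\alpha)\sin(\beta) = \sin(\beta+\alpha) + \sin(\beta-\alpha)$ shows that $(H_\omega \psi_{N,k})(m) = E_{N,k}\,\psi_{N,k}(m)$ at every site $m$ inside the block (including the two endpoints, where the "missing" neighbor value that $\psi_{N,k}$ is supposed to assume is zero, which matches the value of $\psi_{N,k}$ just outside the block), where
$$E_{N,k} := v(i) + 2\cos\!\bigl(\pi k / (N+1)\bigr).$$
The only residual error comes from the two sites $m = n_N - 1$ and $m = n_N + N$ immediately outside the block: there $\psi_{N,k}(m) = 0$ but $H_\omega \psi_{N,k}(m) = \pm\sin(\pi k/(N+1))$, which is bounded by $1$. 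Hence $\|(H_\omega - E_{N,k})\psi_{N,k}\|^2 \leq 2$ while $\|\psi_{N,k}\|^2 = (N+1)/2$, so the normalized vectors form a Weyl sequence at $E_{N,k}$ with discrepancy $O(N^{-1/2})$.

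Finally, as $N \to \infty$ and $k$ varies over $\{1,\dots,N\}$, the numbers $E_{N,k} = v(i) + 2\cos(\pi k/(N+1))$ become dense in $[-2 + v(i),\, 2 + v(i)]$. The Weyl criterion places each $E_{N,k}$ in $\sigma(H_\omega)$, and since the spectrum is closed the whole interval lies in $\sigma(H_\omega)$ for $\mu$-a.e. $\omega$. By ergodicity of $\mu$ the almost sure spectrum $\Sigma_v$ coincides with $\sigma(H_\omega)$ for a.e. $\omega$, yielding $[-2 + v(i),\, 2 + v(i)] \subset \Sigma_v$. I do not foresee any genuine obstacle here; the only delicate steps are the bookkeeping for the boundary contributions in the Weyl computation and the (standard) extraction of arbitrarily long constant-$i$ blocks from a typical $\omega$ via the recurrence argument.
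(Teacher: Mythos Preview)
Your argument is correct and takes a genuinely different route from the paper's. The paper argues by contradiction through Johnson's characterization of the spectrum as the complement of the uniformly hyperbolic locus: assuming $E\in(-2+v(i),2+v(i))$ lies outside $\Sigma_v$, the stable direction $s(E',\omega)$ is defined and $C^1$ in $E'$ with a uniform bound; but along a length-$N$ block of $i$'s one has $s(E',T^N\omega)=(A^{E'}_i)^N s(E',\omega)$, and since $A^{E}_i$ is elliptic this forces $\partial_{E'}s(E',T^N\omega)$ to grow like $\Theta(N)$, a contradiction. Your approach is instead the direct Weyl-sequence construction with Dirichlet box eigenfunctions, which is more elementary and self-contained (no cocycle machinery, no Johnson theorem). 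The paper's proof is terser because the dynamical framework is already set up, while yours would be readable by someone with only basic spectral theory.

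One small wording point: a single vector with $\|(H_\omega-E_{N,k})\psi_{N,k}\|/\|\psi_{N,k}\|=O(N^{-1/2})$ does not put $E_{N,k}$ \emph{in} $\sigma(H_\omega)$; it only gives $\operatorname{dist}(E_{N,k},\sigma(H_\omega))=O(N^{-1/2})$. Your next sentence (density of $\{E_{N,k}\}$ in the interval together with closedness of the spectrum) already handles this, so the conclusion is unaffected---just adjust the phrasing when you write it out. Equivalently, for a fixed $E$ in the interval pick $N_m\to\infty$ and $k_m$ with $E_{N_m,k_m}\to E$; then the normalized $\psi_{N_m,k_m}$ form a genuine Weyl sequence at $E$.
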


\subsection{Outline of the proof}

As mentioned above, the subshift conjecture is true for many subshifts. As discussed in \cite{D}, two principal approachs for establishing zero-measure spectrum are: 1. Using trace map dynamics; 2.Proving uniform convergence, usually under Boshernitzan's condition. In both cases, one first show that the spectrum coincides with the set of energy on which the Lyapunov exponent vanish, then apply Kotani's theory \cite{K}. Thus in these approaches, one comes down to showing that non-uniformly hyperbolicity does not appear at all. 

In order to prove our result, we have to consider the possible appearance of non-uniformly hyperbolic dynamics.Then the main task is to show that the set of energy corresponding to non-uniformly hyperbolic dynamics has small measure. Instead of directly establishing uniformly hyperbolicity for many energies, we appeal to Berezansky's theorem in the spectral theory of lattice Schr\"odinger operators, which says that for almost every energy with respect to the spectral measure, there exists a generalised eigenfunction with polynomial growth. We will construct a closed subset $J \subset \R$ of small Lebesgue measure and a subset $\Omega'$ of the shift space of positive measure, such that for element $\omega \in \Omega'$, for any energy outside of this closed set, the Schr\"odinger operator associated to $\omega$ has no generalised eigenfunction of polynomial growth. This approach concerning the generalised eigenfunction is inspired by the proof of Theorem 3 in \cite{ADZ}.

The main technical difficulty with this naive approach is that : We still have to consider dynamics associated with different energy, whose longtime behaviours could be very different. We overcome this difficulty using the so-called Benedicks-Carleson argument that is originated in the study of H\'enon maps.  It was introduced to the study of quasi-periodic cocycles by Young \cite{Y}, who showed among other things that for certain parametrised family of quasi-periodic cocycles, the Lyapunov exponents are large for a large set of parameters. More recent developments of this type of arguments can by found in \cite{Bj},\cite{WZ}. Our main observation is that Benedicks-Carleson arguments provide a unified mechanism for hyperbolicity for all the energy that is not removed from the parameter exclusion. Roughly speaking, for a short interval of energy that could cause non-uniformly hyperbolicity, we have only one "bad" alphabet that could ruin the exponential growth of the associated cocycle. We inductively define a nested sequence of subset of the subshift starting this alphabet, so that $(n+1)-$th set is contained in $n-$th set,  and each time we consider the Poincar\'e return map restricted to $(n+1)-$set and form an accelerated cocycle defined over $(n+1)-$th set, which is just the consecutive multiplication along the first return map. We inductively prove that the accelerated cocycles are highly hyperbolic and the most expanding and most contracting directions can be related to those of the previous accelerated cocycles. The only problem occurs when apply the matrix corresponding to the "bad" alphabet. We then remove a set of energy each time to produce certain amount of transversality. For the remains of energies, the corresponding Schr\"odinger cocycles are exponentially increasing along a subsequence in time ( this can be compared to one of the main results in \cite{Yoc}, which says that a cocycle is uniformly exponentially increasing is equivalently to being uniformly hyperbolic ). Since we can get good control of the closeness of the stable/unstable directions for matrices in consecutive steps, the parameter removed in each step stays close to the parameters removed in the previous step. Finally, we find a subset of the subshift with positive measure whose elements have good forward and backward landing time at arbitrarily large time scale, which would preclude the existence of generalised eigenfunctions of polynomial growth.

\subsection{Structure of the paper}
In Section \ref{preliminary and notations}, we introduce the setting and the notations. We also show an \textit{a priori} bound for the spectrum based on a classical theorem of Johnson. In Section \ref{a tower construction}, we introduce a sequence of objets and parameters that will later help us estimate the spectrum and control the dynamics. In Section \ref{an iteration scheme} we deal with a technical lemma that will be used repeatedly in Section \ref{choosing the parameters}. Section \ref{choosing the parameters} is devoted to the construction and estimation of the objets introduced in Section \ref{a tower construction}. In Section \ref{cover the spectrum}, we relate the objets introduced in Section \ref{a tower construction} to the spectrum, which is the main novelty of this paper. In Section \ref{Area of the spectrum and the proof of the main theorem}, we estimate the spectrum and conclude the proof of the main theorem. In Section \ref{folklore theorems}, we prove Theorem \ref{positive measure spectrum for large couplings theorem} and Theorem \ref{unbounded repetitions theorem}.

\subsection*{Acknowledgement}
I am grateful to Artur Avila for his supervision. I thank Jean-Paul Allouche for his comments on complexity functions which were used in an earlier version. I thank S\'ebastien Gou\"ezel for useful conversations which give a part of the motivation of this paper.
I thank Zhenghe Zhang for reading an earlier version of this paper and comments. Special thanks go to David Damanik for his generous encouragement, his interest in this problem and detailed comments, these including pointing out an important mistake in the statement of Theorem \ref{minimal cor} in an earlier version; and to Qi Zhou for his consistent support and many interesting mathematical and non-mathematical conversations.


\section{Ergodic Schr\"odinger operators over subshifts} \label{preliminary and notations}

Given a finite set $\mathcal{A}$, 
we define the shift transformation $T$ on $\mathcal{A}^{\Z}$ by $T(\omega)_{n} = \omega_{n+1}$. 
Let  $\Omega$ be a $T-$invariant compact subset of $\mathcal{A}^{\Z}$. Let $\mu \in \mathcal{P}(\Omega)$ be an ergodic $T-$invariant measure.  Without loss of generality, in this paper we will always assume that
\begin{eqnarray*}
\Omega = supp \mu
\end{eqnarray*}
for otherwise we can replace $\Omega$ by $supp \mu$.
We will assume that for any $\alpha \in \mathcal{A}$, we have
\begin{eqnarray*}
\mu(\{\omega; \omega_0 = \alpha\}) > 0
\end{eqnarray*}
for otherwise we can replace $\mathcal{A}$ by one of its subsets.

Let $v : \mathcal{A} \to \R$ be a function. Without loss of generality, in this paper we will always assume that: for any $\alpha, \beta \in \mathcal{A}$, we have $v(\alpha) \neq v(\beta)$. To each such $v$, we can associate a continuous function $V : \Omega \to \R$ defined by $V(\omega) = v(\omega_0)$. In the study of ergodic Schr\"odinger operators, $V$ is usually referred to as the potential function. In the following, we will call both $V$ and $v$ the potential without causing ambiguity in understand the results. For each $\omega \in \Omega$, let $\Sigma_{\omega}$ denote the spectrum of the Schr\"odinger operator $H_{\omega}$ on $\ell^{2}(\Z)$ defined by 
\begin{eqnarray}
(H_{\omega}u)_n = u_{n+1} + u_{n-1} + V(T^{n}\omega) u_n 
\end{eqnarray}
It is well-known that $\Sigma_{\omega}$ is the same for $\mu$ almost every $\omega$. 
 We denote the almost sure spectrum of this family of operators by $\Sigma_{v}$. When there is no confusion on the potential function $v$, we denote $\Sigma = \Sigma_{v}$.  It is also well-known that when $(\Omega, T)$ is minimal, $\Sigma_{\omega}$ is the same for all $\omega$. Although we will not exploit this fact in this paper.

Denote $R = \{ v(\alpha) \}_{\alpha \in \mathcal{A}}$.
For any $\alpha \in \mathcal{A}$, we denote
\begin{eqnarray*}
   A^{E}_{\alpha} & = & \begin{bmatrix}
                                      E- v({\alpha}) & -1 \\
                                      1                     &  0 
                              \end{bmatrix} 
\end{eqnarray*}
and
\begin{eqnarray*}
 A^{E}(\omega) &=& A^{E}_{\omega_0}
\end{eqnarray*}
We define a function $A^{E} : \Z \times \Omega \to SL(2,\R)$ by setting
\begin{eqnarray*}
A^{E}(0, \omega) &=& Id \\
A^{E}(k, \omega) &=& A^{E}(T^{k-1}(\omega))\cdots A^{E}(\omega)   \mbox{   for all $k > 0$}
\end{eqnarray*}
and
\begin{eqnarray*}
A^{E}(-k, \omega) &=& A^{E}(T^{-k}(\omega))^{-1} \cdots A^{E}(T^{-1}(\omega))^{-1}   \mbox{   for all $k > 0$}
\end{eqnarray*}
For any $n,m \geq 0$, any $\omega \in \Omega$ we have the following relation
\begin{eqnarray*}
A^{E}(n+m, \omega) = A^{E}(m, T^{n}(\omega)) A^{E}(n, \omega)
\end{eqnarray*}
For any finite word $\alpha = \omega_0 \omega_1 \cdots \omega_{n-1}$, where $\omega_i \in \mathcal{A}$ for all $0\leq i \leq n-1$, we define
\begin{eqnarray*}
A^{E}(\alpha) = A^{E}_{\omega_{n-1}}\cdots A^{E}_{\omega_0}
\end{eqnarray*}

\begin{definition}
For any function $v : \mathcal{A} \to \R$, we call $v$ an  \textnormal{ admissible potential } if for any two distinct elements $\alpha,\beta\in \mathcal{A}$, we have $v(\alpha) \neq v(\beta)$. For any admissible potential $v$,
we denote $\lambda_{v} = \min_{\alpha,\beta \in \mathcal{A}, \alpha \neq \beta}|v(\alpha) - v(\beta)|$, and call it the \textnormal{sparseness constant} of the potential $v$.
\end{definition}

We have the following notion called "Uniformly Hyperbolic". We use the definition in \cite{Yoc}, adapted to our situation.
\begin{definition}
Fix an admissible potential $v : \mathcal{A} \to \R$, for each $E \in \R$, we have a map  $A^{E}(1,\cdot): \Omega \to SL(2,\R)$, and we call it the  \textnormal{Schr\"odinger cocycle at energy $E$}. The Schr\"odinger cocycle at energy $E$ is called  \textnormal{Uniformly Hyperbolic} if there exists two (necessarily unique) invariant continuous sections
\begin{eqnarray*}
e_s, e_u : \Omega \to \mathbb{P}^{1}\R^2
\end{eqnarray*}
with $e_s(\omega) \neq e_u(\omega)$ for any $\omega \in \Omega$, and $e_s$ is uniformly repelling ( in the $\mathbb{P}^{1}\R^2$ direction) and $e_u$ is uniformly contracting (in the $\mathbb{P}^{1}\R^2$ direction).
\end{definition}

We have the following well-known result (see \cite{J})
\begin{thm}[Johnson]\label{johnson}
We have $\Sigma = \{E; \mbox{ $A^{E}(1,\cdot)$ is not Uniformly Hyperbolic}\}$
\end{thm}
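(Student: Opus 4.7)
The plan is to prove two inclusions. Write $UH(E)$ for the statement that $A^{E}(1,\cdot)$ is uniformly hyperbolic, and $U = \{E : UH(E)\}$. I will show that $U \subset \R \setminus \Sigma$ and that $\R \setminus \Sigma \subset U$.

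For the first inclusion, suppose $UH(E)$ holds. Pick non-zero lifts $u^{s}(\omega) \in e_{s}(\omega)$ and $u^{u}(\omega) \in e_{u}(\omega)$; via the standard identification $A^{E}_{\alpha} \binom{\psi_{n+1}}{\psi_{n}} = \binom{\psi_{n+2}}{\psi_{n+1}}$ for solutions of $H_{\omega}\psi = E\psi$, the orbit of $u^{s}(\omega)$ under forward iteration of the cocycle produces a formal solution $\psi^{+}_{\omega}$ that decays exponentially at $+\infty$ (by uniform contraction along $e_{s}$), and symmetrically a solution $\psi^{-}_{\omega}$ decaying exponentially at $-\infty$ (by uniform expansion along $e_{u}$ applied backwards). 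Since $e_{s}(\omega) \neq e_{u}(\omega)$, these solutions are linearly independent, so one builds an exponentially decaying Green's function
\[
G_{\omega}(n,m;E) = \frac{1}{W(\psi^{+}_{\omega},\psi^{-}_{\omega})} \cdot \begin{cases} \psi^{+}_{\omega}(n)\psi^{-}_{\omega}(m), & n \geq m, \\ \psi^{-}_{\omega}(n)\psi^{+}_{\omega}(m), & n \leq m, \end{cases}
\]
where the Wronskian $W$ is non-zero and, by uniformity over $\omega$, bounded away from $0$. Schur's test then produces a bounded inverse for $H_{\omega} - E$ on $\ell^{2}(\Z)$ for every $\omega \in \Omega$; hence $E \notin \Sigma_{\omega}$ for every such $\omega$, so $E \notin \Sigma$.

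For the reverse inclusion, assume $E \notin \Sigma$. Then $H_{\omega} - E$ is invertible on $\ell^{2}(\Z)$ for $\mu$-a.e.\ $\omega$, and a Combes--Thomas estimate gives exponential decay $|G_{\omega}(n,m;E)| \leq Ce^{-c|n-m|}$ with $c,C$ depending only on $\mathrm{dist}(E,\Sigma)$ and on $\sup_{\alpha}|v(\alpha)|$, hence uniform in $\omega$. Reading off $n \mapsto G_{\omega}(n,0;E)$ as $n \to \pm\infty$ yields exponentially decaying formal solutions of $H_{\omega}\psi = E\psi$ at $\pm\infty$, and these assemble into candidate sections $e_{s}(\omega), e_{u}(\omega)$ of $\mathbb{P}^{1}\R^{2}$. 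Invariance is immediate from the cocycle/transfer relation. Uniform contraction/expansion is inherited from the uniform exponential decay of $G_{\omega}$; transversality $e_{s}(\omega)\neq e_{u}(\omega)$ follows because otherwise one would have an $\ell^{2}(\Z)$-eigenfunction at energy $E$, contradicting $E \notin \Sigma$. For continuity, one can either invoke that $\omega \mapsto (H_{\omega}-E)^{-1}$ is strong-operator continuous on the compact set $\Omega$, or argue that $e_{s}(\omega)$ is obtained as a limit of iterated pullbacks of a fixed line by a uniformly contracting cocycle acting on $\mathbb{P}^{1}\R^{2}$, and such a limit depends continuously on $\omega$.

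The main obstacle is the second direction: passing from a.e.\ invertibility of $H_{\omega}-E$ (a purely measure-theoretic statement) to the existence of \emph{continuous} invariant bundles on all of $\Omega = \mathrm{supp}\,\mu$. The key tool is the Combes--Thomas bound, which upgrades "bounded resolvent" into uniform exponential decay of the Green's function with constants depending only on $E$ and $\sup |v(\alpha)|$, thus automatically uniform over $\omega \in \Omega$. Once this uniform decay is in hand, both the definition of $e_{s}, e_{u}$ everywhere on $\Omega$ and their continuity and uniform hyperbolicity follow from standard projective/cocycle arguments.
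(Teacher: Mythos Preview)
The paper does not prove this theorem; it is quoted as a well-known result and attributed to Johnson \cite{J}, with no argument given. There is therefore nothing in the paper to compare your proposal against.

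Your sketch follows the standard route and is essentially correct, but one step is glossed over. In the reverse direction you assert that Combes--Thomas gives decay constants ``depending only on $\mathrm{dist}(E,\Sigma)$ and on $\sup_{\alpha}|v(\alpha)|$, hence uniform in $\omega$.'' In fact Combes--Thomas for a fixed $\omega$ gives constants depending on $\mathrm{dist}(E,\sigma(H_\omega))$, and from $E\notin\Sigma$ you only know $\sigma(H_\omega)=\Sigma$ for $\mu$-a.e.\ $\omega$. To get uniformity on all of $\Omega=\mathrm{supp}\,\mu$ you need the auxiliary fact that $\sigma(H_\omega)\subset\Sigma$ for \emph{every} $\omega\in\mathrm{supp}\,\mu$; this follows from strong resolvent convergence $H_{\omega_n}\to H_\omega$ along generic $\omega_n\to\omega$ together with the uniform bound $\|(H_{\omega_n}-E)^{-1}\|=1/\mathrm{dist}(E,\Sigma)$. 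You flag this ``a.e.\ to everywhere'' passage as the main obstacle, which is right, but your resolution of it is not quite accurate as written. Also, your second suggested continuity argument (limits of iterated pullbacks by a ``uniformly contracting cocycle'' on $\mathbb{P}^1\R^2$) is circular, since projective contraction is exactly the uniform hyperbolicity you are trying to establish; the first option via continuity of $\omega\mapsto(H_\omega-E)^{-1}$ is the one that works cleanly.
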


For any $E_0 \in  R$, we associate an interval centered at $E_0$ $$I_{E_0} = [E_0-H, E_0+H]$$ for some constant $H > 0$ to be determined as follow.

We choose $H > 0$ such that, for any $E \notin \bigcup_{E_0 \in  R} I_{E_0}$, $A^{E}(1,\cdot)$ is Uniformly Hyperbolic.
Indeed, when $H$ is sufficiently large, for any $E \notin \bigcup_{E_0 \in  R} I_{E_0}$, there exist two closed cones $C_{+},C_{-} \subset \R^2$ such that for any $\alpha \in \mathcal{A}$, we have $A^{E}_{\alpha}(C_{+}) \setminus \{0\} \subset int C_{+}$ and $(A^{E}_{\alpha})^{-1}(C_{-}) \setminus \{0\} \subset int C_{-}$.
A classical construction in dynamical systems shows that this implies $A^{E}(1,\cdot)$ is Uniformly Hyperbolic.

Hence by Theorem \ref{johnson} 
\begin{eqnarray}\label{sigma not in u h}
\Sigma \subset \bigcup_{E_0 \in  R} I_{E_0} 
\end{eqnarray}

We will need the following general result on lattice Schr\"odginer operators.( see \cite{BSU})
\begin{thm}[Berezansky]\label{Berezansky}
Almost every $E$ with respect to the spectral measure admits a generalized eigenfunction of polynomial growth.
\end{thm}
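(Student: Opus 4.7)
The plan is to prove Berezansky's theorem via the classical machinery of generalized eigenfunction expansions based on a rigged Hilbert space (Gel'fand triple) construction. First, I would set up the triple $\Phi \subset \ell^2(\Z) \subset \Phi^*$, where $\Phi = \ell^2_s(\Z)$ is the weighted space of sequences with $\sum_n (1+|n|)^{2s}|u_n|^2 < \infty$ for a fixed $s > 1/2$, and $\Phi^* = \ell^2_{-s}(\Z)$ is its antidual. The crucial feature is that for $s>1/2$ the inclusion $\iota : \Phi \hookrightarrow \ell^2(\Z)$ is Hilbert-Schmidt (in fact its Hilbert-Schmidt norm squared is $\sum_n (1+|n|)^{-2s} < \infty$).

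Next, using the spectral theorem, I would pick a cyclic family $\{\varphi_j\}_{j \in \N} \subset \Phi$ for $H_\omega$ (the $\delta_0$ and $\delta_1$ already suffice since $H_\omega$ is a Jacobi matrix) with associated spectral measure $\rho$, and a unitary $U : \ell^2(\Z) \to \bigoplus_j L^2(\R, d\rho_j)$ diagonalizing $H_\omega$. The core analytic step is then to show that for $\rho$-a.e. $E \in \R$, the linear functional
\begin{equation*}
\psi_E : \Phi \to \CC, \qquad \psi_E(\phi) = \lim_{\e \to 0} \frac{1}{\rho([E-\e,E+\e])}\int_{E-\e}^{E+\e} (U\phi)(E')\, d\rho(E')
\end{equation*}
is well-defined, bounded on $\Phi$, and represents a nonzero element of $\Phi^*$. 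This is done by estimating the Hilbert-Schmidt norm of the spectral projectors composed with $\iota$, so that a Borel-Cantelli / Fatou-type argument gives a.e. convergence and boundedness of $\psi_E$ with a uniform $\Phi$-continuous bound; this is the content of the Berezansky/Gel'fand eigenfunction expansion theorem.

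Finally, I would verify that each such $\psi_E$ is a \emph{generalized eigenfunction} in the sense that $\langle \psi_E,\, (H_\omega - E)\phi \rangle = 0$ for all $\phi$ in a dense subspace of $\Phi$, which is a direct consequence of how $\psi_E$ was obtained through the spectral decomposition. Identifying $\psi_E$ with a sequence via $\Phi^* = \ell^2_{-s}(\Z)$, membership in this weighted space forces $|\psi_E(n)| = O(|n|^{s+1/2+o(1)})$, i.e.\ polynomial growth. Since $\psi_E$ satisfies the finite-difference relation $\psi_E(n+1) + \psi_E(n-1) + V(T^n\omega)\psi_E(n) = E\psi_E(n)$ in the distributional sense on $\Phi$, and both sides of this equation lie in $\Phi^*$, the relation holds pointwise by tridiagonality of $H_\omega$. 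The main obstacle here is the abstract step establishing that $\psi_E$ exists and is nonzero for $\rho$-a.e.\ $E$; this requires the Hilbert-Schmidt condition on the embedding $\iota$ and a careful disintegration of the spectral measure, which is precisely what is bypassed in the paper by citing Berezansky's theorem.
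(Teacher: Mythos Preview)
The paper does not prove this theorem; it is quoted as a known result with a reference to \cite{BSU} and is used as a black box in Section~\ref{cover the spectrum}. So there is no ``paper's own proof'' to compare against.

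Your outline is the standard rigged Hilbert space (Gel'fand triple) argument and is essentially the route taken in the reference the paper cites, so in that sense you are reproducing the intended proof rather than offering an alternative. A couple of small remarks on the sketch itself: for the one-dimensional Jacobi case you can take $\{\delta_0,\delta_1\}$ as a cyclic set, as you note, which simplifies the disintegration considerably. Also, membership of $\psi_E$ in $\ell^2_{-s}(\Z)$ does not by itself give a pointwise bound; you need to combine it with the fact that $\psi_E$ solves the three-term recurrence, so that consecutive values $|\psi_E(n)|$ and $|\psi_E(n\pm 1)|$ are comparable (with constant depending on $E$ and $\|V\|_\infty$), which then upgrades the weighted $\ell^2$ bound to $|\psi_E(n)| = o((1+|n|)^{s})$. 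The exponent you wrote, $s+\tfrac12+o(1)$, is harmless but not sharp. None of this affects the application in the paper, which only needs some polynomial bound.
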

In particular, Theorem \ref{Berezansky} implies that for any potential $v$, for any $\omega \in \Omega$, almost every $E$ with respect to the spectral measure of the Schr\"odinger operator associated to $\omega$, there exists $X \in \R^2$, $C,d > 0$ such that
\begin{eqnarray*}
\norm{A^{E}(n, \omega)X} \leq C(|n|+1)^{d}, \forall n \in \Z
\end{eqnarray*}

\subsection{Notations}

Throughout this paper, we will use $\lesssim$ and $\gtrsim$ to denote less than or greater than up to multiplying a universal constant. In places we use Laudau's $O(f)$ to denote a quantity majorized by a universal constant times $f$, and use $\Theta(f)$ to denote a quantity minorized by a positive universal constant times $f$.

For any $a,b \in \R$, we will use $|a-b|_{\R/\pi\Z}$ to denote the distance from $a-b$ to the set $\{k\pi\}_{k \in \Z}$. For any two vectors $X_1,X_2 \in \R^2$ such that $X_i = r_i \begin{bmatrix} \cos\theta_i \\ \sin\theta_i \end{bmatrix}$ for $i=1,2$, we denote $\angle(X_1,X_2) = |\theta_1-\theta_2|_{\R/\pi\Z}$.


\section{A tower construction} \label{a tower construction}

In order to prove Theorem \ref{main theorem}, it suffices to prove that for any $\alpha \in \mathcal{A}$, we have the corresponding upper bound for the Lebesgue measure of $\Sigma \bigcap I_{v(\alpha)}$. Then Theorem \ref{main theorem} will follow from \eqref{sigma not in u h} and the fact that $card(A) < \infty$.

Throughout Section \ref{a tower construction} to Section \ref{Area of the spectrum and the proof of the main theorem}, we fix $\alpha_0 \in \mathcal{A}$ and denote $E_0 =  v(\alpha_0) \in R$.
Then Theorem \ref{main theorem} is reduced to the following.
\begin{thm} \label{real main theorem}
Under the condition of Theorem \ref{main theorem}, for any $0 < \gamma < \frac{1}{4}$ where $L$ is given by condition (2) in Theorem \ref{main theorem}, there exists a constant $Q > 0$, such that for any admissible potential $v : \{1,\cdots,k\} \to \R$, we have $Leb(\Sigma_v \bigcap I_{E_0})  < Q\lambda_{v}^{-\gamma}$.
\end{thm}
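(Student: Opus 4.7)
The plan is to construct, inside $I_{E_0}$, a closed set $J$ of Lebesgue measure $O(\lambda_v^{-\gamma})$ such that for $\mu$-typical $\omega$ every energy $E \in I_{E_0} \setminus J$ fails to admit a polynomially bounded generalised eigenfunction of $H_\omega$. Combined with Theorem \ref{Berezansky} and the fact that the almost sure spectrum is contained in the closure of the energies that do admit such an eigenfunction, this forces $\Sigma_v \cap I_{E_0} \subset J$, which is the claimed bound. The starting observation is a dichotomy on $I_{E_0}$: the matrix $A^E_{\alpha_0}$ has trace $E-E_0 \in [-H,H]$ and is at best mildly hyperbolic, while for every $\alpha \neq \alpha_0$ the matrix $A^E_\alpha$ satisfies $|E-v(\alpha)| \geq \lambda_v - H$, is strongly hyperbolic with expansion rate comparable to $\lambda_v$, and has stable and unstable directions contained in a cone of half-angle $O(\lambda_v^{-1})$ around the horizontal. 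Exponential growth of $A^E(n,\omega)$ can therefore only fail through near-cancellations triggered by visits to $\alpha_0$; condition (1) caps any run of $\alpha_0$'s, and condition (2) ensures that the induced return to $\Omega_0 := \{\omega \in \Omega : \omega_0 = \alpha_0\}$ happens within $L$ steps.

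The heart of the proof is a Benedicks-Carleson tower. I plan to build nested sets $\Omega_0 \supset \Omega_1 \supset \Omega_2 \supset \cdots$ by repeatedly inducing subject to a resonance condition at each stage, together with accelerated cocycles $M_n^E$ on $\Omega_n$ defined as the product of $A^E$ along a single first-return block at level $n$. I will prove inductively that for every $E$ outside a growing exclusion set $J_0 \cup \cdots \cup J_n \subset I_{E_0}$ and every $\omega \in \Omega_n$, one has $\norm{M_n^E(\omega)} \gtrsim \lambda_v^{cN_n}$ for a definite $c > 0$, where $N_n$ is the combinatorial length of the return window, and that the most expanding and most contracting directions of $M_n^E(\omega)$ stay close to the horizontal and to those of the scale-$(n-1)$ factor involved. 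The inductive step analyses the composition of two scale-$(n-1)$ blocks separated by a visit to $\alpha_0$: transversality between the unstable direction of the left factor and the stable direction of the right factor, combined with the large singular values of both, yields hyperbolicity of the composite, failing only when the angle between these directions is forced near $0$. Thanks to the almost-horizontal control, this angle depends on $E$ with a definite non-zero derivative, so the bad set $J_n$ has Lebesgue measure $\lesssim \lambda_v^{-cN_n}$ times the combinatorial count of admissible length-$N_n$ windows. A geometric summation then yields $\text{Leb}\bigl(\bigcup_n J_n\bigr) \lesssim \lambda_v^{-\gamma}$ for any $\gamma < 1/4$.

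The final step converts the cocycle estimates into the spectral bound. Set $J = \bigcup_n J_n$. By construction, for $E \in I_{E_0} \setminus J$ and $\omega$ in a positive-measure subset of $\Omega_0$ there is an infinite sequence of forward times $t_k(\omega) \to +\infty$ at which $A^E(t_k,\omega)$ is very hyperbolic; the symmetric argument for $T^{-1}$ provides analogous backward times. The ergodic theorem applied to the return time $r_0$ produces a full-measure $\Omega' \subset \Omega$ on which these times grow only linearly in $k$, so any polynomially bounded solution of $H_\omega u = Eu$ would have to lie simultaneously in the stable direction of the forward cocycle and the unstable direction of the backward cocycle at arbitrarily large scales, forcing $u \equiv 0$. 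Theorem \ref{Berezansky} then yields $\Sigma_v \cap I_{E_0} \subset J$, completing the proof. The main obstacle is the inductive angular control in the tower: one must show that after each resonance-free composition the expanding direction of $M_n^E$ stays within a controlled distance of that of $M_{n-1}^E$, uniformly in the combinatorial possibilities for the return window. The exponent $1/4$ reflects the balance between the expansion $\lambda_v^{cN_n}$ gained per level and the angular loss $O(\lambda_v^{-1})$ suffered at each transit of $\alpha_0$, and improving it would demand a sharper geometric analysis at the resonances than this essentially spectral argument allows.
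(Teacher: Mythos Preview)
Your overall architecture matches the paper's: a Benedicks--Carleson tower of induced cocycles over nested subsets of $\{\omega:\omega_0=\alpha_0\}$, exclusion sets $J_n$ defined by angular near-resonances at the $\alpha_0$-junctions, and Berezansky's theorem to convert cocycle growth into a spectral inclusion. The inductive hyperbolicity-with-angular-control that you sketch is precisely the content of the paper's Sections~3--5.

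The genuine gap is in your measure estimate for $\bigcup_n J_n$. You propose to bound $\mathrm{Leb}(J_n)$ by (angle threshold)~$\times$~(combinatorial count of length-$N_n$ windows) and sum geometrically. This is problematic for two reasons. First, the theorem assumes \emph{no complexity bound} on the subshift, so the number of level-$n$ return words can be as large as $k^{O(N_n)}$; you give no argument why $\lambda_v^{-cN_n}$ beats this, nor how the exponent $\tfrac14$ would emerge from such a balance. Second, and more seriously, the $C^1$ lower bound you invoke (``the angle depends on $E$ with a definite non-zero derivative'') is only available at level~$0$: for long words $\alpha,\beta\in\mathcal A_n$ the functions $E\mapsto u(A^E(\alpha)),\, s(A^E(\beta))$ can be highly oscillatory, so $J(\alpha,\beta,j,\kappa_n)$ need not be a single short interval and a direct measure bound is not straightforward.

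The paper's Section~7 sidesteps both obstacles by a telescoping argument that you have the ingredients for but do not carry out. The angular closeness you state --- that the expanding/contracting directions of the level-$n$ block stay within $\zeta_{n-1}$ of those of its leading and trailing level-$(n-1)$ factors --- iterates all the way down: for $E\notin\bigcup_{m<n}J_m$ one gets
\[
J(\alpha,\beta,j,\kappa_n)\setminus\bigcup_{m<n}J_m\ \subset\ \overline{J\bigl(\mathrm{rr}_0(\alpha),\,\mathrm{hd}_0(\beta),\,j,\,\theta\bigr)},\qquad \theta=\kappa_n+c_5\sum_{m<n}\zeta_m=O(\lambda_v^{-\gamma}).
\]
Since $\mathcal A_0$ is \emph{finite} (this is exactly where condition~(2) is used), all the $J_n$ sit inside a fixed finite union of level-$0$ sets $J(\alpha_0,\beta_0,j,\theta)$, and \emph{now} the level-$0$ derivative bound (Lemma~\ref{lemma C1}) gives $\mathrm{Leb}(\overline J)=O(\theta)=O(\lambda_v^{-\gamma})$. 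This nesting, rather than a direct summation, is what makes the estimate complexity-free and is the step you are missing. (A minor point: one must take the closure $\overline J$, since Berezansky controls only spectral-a.e.\ energies and $J$ is a priori only a countable union of closed sets.)
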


Hereafter, we will assume that the condition in Theorem \ref{main theorem} holds. We denote
\begin{eqnarray*}
\lambda &=& \lambda_{v}
\end{eqnarray*}

Define 
\begin{eqnarray*} 
\Delta & = & \{ \omega \in \Omega; \omega_{-1} \neq \alpha_0, \omega_{0} =\alpha_0 \}  \\
\Delta_{(i)} & = & \{ \omega \in \Omega; \omega_{-1} \neq \alpha_0, \omega_{0} =\alpha_0, \cdots, \omega_{i-1} =\alpha_0, \omega_{i} \neq \alpha_0  \}  
\end{eqnarray*}

Since ergodic subshift $(\Omega, T, \mu)$ satisfies the condition (1) in Theorem \ref{main theorem}, then there exists $K > 0$ such that
\begin{eqnarray*}
 \Delta = \bigsqcup _{i = 1}^{ K} \Delta_{(i)}  \mbox{   up to a $\mu-$null set}
\end{eqnarray*}
We define 
\begin{eqnarray*}
\Delta_0 &=&\Delta  
\end{eqnarray*}
By our assumptions in Section \ref{preliminary and notations}, we have
\begin{eqnarray*}
\mu(\Delta_0) > 0
\end{eqnarray*}

After possibly removing a $\mu-$null set from $\Delta_0$, we can assume that 
for any $\omega \in \Delta_0$, there exist integers $n, m >0$ such that $T^{n}(\omega) \in \Delta_0$ and $T^{-m}(\omega) \in \Delta_0$. 

For any $E \in \R$, for any $\omega \in \Delta_0$,  we define
\begin{eqnarray*}
l_0(\omega) &=& \inf\{ k ; k > 0 , T^k(\omega) \in \Delta_0 \}  \\
T_0(\omega) &=& T^{l_0(\omega)}(\omega) \\
A_0^{E}(\omega) &=& A^{E}(l_0(\omega), \omega)
\end{eqnarray*}
Note that there is an ergodic $T_0-$invariant probability measure $\mu_0$ on $\Delta_0$ given by
$$\mu_0 = \frac{1}{\Delta_0}\mu|_{\Delta_0}$$

For any $E \in \R$, we denote $C^{E} = A^{E}_{\alpha_0}$ and $C^{E}_i = (C^{E})^{i}$.
For any $\omega \in \Delta_{(i)}$, we have that 
$$A^{E}(T^{k}\omega) = C^{E}    \mbox{ for all $ 0 \leq k \leq i-1 $ } $$
For any $1 \leq i \leq K$, for all $\omega \in \Delta_{(i)}$, we define
\begin{eqnarray*}
C^{E}(\omega) &=& A^{E}(i, \omega) = C^{E}_i \\
B_0^{E}(\omega) &=& A^{E}(l_0(\omega)-i, T^{i}(\omega)) = A_0^{E}(\omega)(C^{E}(\omega))^{-1}
\end{eqnarray*}

In the following, for any $n\geq 0$, we are going to define $\Delta_n \subset \Delta_0$, to which we associate a map $T_n: \Delta_n \to \Delta_n$, an ergodic $T_n$-invariant probability measure $\mu_n$, functions $l_n: \Delta_n \to \Z$, $r_n : \Delta_{n+1} \to \Z$,
 $A^{E}_n,B^{E}_n : \Delta_n \to SL(2,\R)$ satisfying the following properties:
 
 (P1) For any $ n \geq 0$, $\mu(\Delta_n) > 0$ and $\Delta_{n+1} \subset \Delta_{n}$;
 
 (P2) For every $\omega \in \Delta_n$, $l_n(\omega) = \inf\{ m > 0; T^{m}(\omega) \in \Delta_n \} < \infty$ and $T_n(\omega) = T^{l_n(\omega)}(\omega)$;
 
 (P3) $\mu_n = \frac{1}{\mu(\Delta_n)}\mu|_{\Delta_{n}}$;

 (P4) $A^{E}_n(\omega) = A^{E}(l_n(\omega), \omega)$;
 
 (P5) For each $1\leq i\leq K$, for any $\omega \in \Delta_n \cap \Delta_{(i)}$, we have $A^{E}_n(\omega) = B^{E}_n(\omega) C^{E}_i$.

(P6) $r_n(\omega) = \inf\{ k ; k>0, T_n^k(\omega) \in \Delta_{n+1} \} < \infty$  for all $\omega \in \Delta_{n+1}$.

By (P2), we see that $T_n$ is the Poincar\'e return map on $\Delta_n$.

By (P2),(P4),(P5) and (P6) we get
\begin{eqnarray}
\label{used once no}l_{n+1}(\omega) &=& \sum_{i=0}^{r_n(\omega)-1} l_n(T_n^{i}\omega),\forall n \geq 0, \forall \omega \in \Delta_{n+1} \\
B_n^{E}(\omega) &=& A^{E}(l_n(\omega) - i, T^{i}(\omega)),\forall n\geq0, \forall \omega \in \Delta_{n} \bigcap \Delta_{(i)}
\end{eqnarray}

By the definition of $\Delta_0, \Delta_{(i)}$, we see that $l_0(\omega) \geq 1$ for all $\omega \in \Delta_0$.
Hence by \eqref{used once no}, we have $l_n(\omega) \geq 1$ for all $n \geq 0$ and $\omega \in \Delta_{n}$.

\begin{nota} \label{conventions}
For any matrix $A \in SL(2,\R)\setminus SO(2,\R)$, we denote $u(A),s(A),\lambda(A)$ to be real numbers that satisfy \begin{eqnarray*}
A &=& R_{u(A)}\begin{bmatrix} \lambda(A) & 0 \\ 0 & \lambda(A)^{-1} \end{bmatrix} R_{\frac{\pi}{2} - s(A)} \\
\mbox{and } \lambda(A) &>& 1
\end{eqnarray*}
Here $u(A),s(A)$ are well-defined up to adding a multiple of $\pi$.
\end{nota}

In Section \ref{choosing the parameters} we will construct a finite union of intervals, denoted by $J_n \subset I_{E_0}$ for each $n\geq 0$.
We now introduce a sequence of parameters $\bar{\lambda}_n, \zeta_n, \chi_n, M_n, N_n, \kappa_n > 0$ satisfying the following estimates: 
\begin{eqnarray}
\label{ratios of time intervals}   0 < \sup_{\omega \in \Delta_n} l_n(\omega) &\leq& M_n \inf_{\omega \in \Delta_n} l_n(\omega) \\
\label{r n well controlled} r_n(\omega) &\in& \{N_n, N_n+1\} ,\forall n \geq 0, \forall \omega \in \Delta_{n+1}\\
\label{N n inverse summable}\sum_{n = 0}^{\infty} \frac{1}{N_n} &<& \infty
\end{eqnarray}

For any $n\geq 0$, any $E \in I_{E_0} \setminus \bigcup_{n\geq m\geq 0}J_m$, any $\omega \in \Delta_{n+1}$, any $0 \leq q < r \leq r_n(\omega)$, denote
\begin{eqnarray*}
B^{E} = A^{E}_n(T^{r-1}_n(\omega)) \cdots A^{E}_n(T_n^{q+1}(\omega))B^{E}_n(T_n^q(\omega))
\end{eqnarray*}
then
\begin{eqnarray}
\label{B E hyperbolic} B^{E} \in SL(2,\R) \setminus SO(2,\R)
\end{eqnarray}
and
\begin{eqnarray}
\label{C1 bound of u n}   |u(B^{E})  - u(B^{E}_n(T_{n}^{r-1}(\omega)))|_{\R/\pi\Z} &\leq& \zeta_n \\
\label{C1 bound of s n}   |s(B^{E})  - s(B^{E}_n(T_n^{q}(\omega)))|_{\R/\pi\Z} &\leq& \zeta_n    \\
\label{lower bound of norm 2} \lambda(B^{E}) \geq  e^{\chi_{n+1} \sum_{i=q}^{r-1}l_n(T_n^{i}(\omega))}
\end{eqnarray}
Note that by taking $r=q+1$ and \eqref{B E hyperbolic}, we have $B^{E}_n(T_n^{r-1}(\omega)), B^{E}_n(T_n^{q}(\omega)) \in SL(2,\R) \setminus SO(2,\R)$. This shows that the left hand side of \eqref{C1 bound of u n} and \eqref{C1 bound of s n} are well-defined.

Moreover, for any $n \geq 0$, any $E \in I_{E_0} \setminus \bigcup_{n-1 \geq m \geq 0} J_{m}$, any $\omega \in \Delta_n$, we have
\begin{eqnarray}
\label{lower bound of norm}   \lambda(B^{E}_n(\omega)) \geq e^{\chi_n l_n(\omega)} \geq \bar{\lambda}_n
\end{eqnarray}

We will choose an absolute constant $C > 1$ such that 
\begin{eqnarray}\label{C1 upper bound of critical matrices} \norm{C^{E}_k},\norm{\partial_E C^{E}_k} \leq C, \forall 1 \leq k \leq K, \forall E \in I_{E_0} \end{eqnarray}

We will use the following lemma to determine the values of $\bar{\lambda}_0, \chi_0, M_0$.

\begin{lemma}\label{lemma initial data}
We can choose $M_0 = \frac{\sup_{\omega \in \Delta_0} l_0(\omega) }{ \inf_{\omega \in \Delta_0} l_0(\omega) } < \infty$ so that \eqref{ratios of time intervals} is valid for $n=0$. For any $\lambda > 0$ sufficiently large, we can choose $\bar{\lambda}_0 = \frac{1}{2}\lambda$,$ \chi_0 = \log \bar{\lambda}_0$ so that \eqref{lower bound of norm} is valid for $n=0$. Moreover, for any $\lambda > 0$ sufficiently large, we have \eqref{B E hyperbolic} for $n=0$.
\end{lemma}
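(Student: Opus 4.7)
The plan is to verify the three assertions in turn, with the hyperbolicity and growth bounds following from a standard cone estimate for large coupling.

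For the finiteness of $M_0$: condition (2) of Theorem~\ref{main theorem} gives, for every $\omega \in \Omega$, an occurrence of $\alpha_0$ in each window of length $L$, and condition (1) ensures that the set $\{\omega : \omega_0 = \cdots = \omega_{K-1}\}$ has $\mu$-measure zero, hence so does the set of $\omega$ with any $K$-run of $\alpha_0$ anywhere, by $T$-invariance. Replacing $\Delta_0$ by its intersection with a $T$-invariant conull set disjoint from this bad set (still of positive $\mu$-measure), every $\omega \in \Delta_0$ has some $\omega_j \neq \alpha_0$ with $1 \leq j \leq K-1$; applying condition (2) to $T^j\omega$ then yields $\omega_k = \alpha_0$ for some $k \in [j, j + L - 1]$. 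Taking the smallest such $k$ gives $l_0(\omega) \leq K + L - 1$, and since $l_0 \geq 1$ by construction of $\Delta_0$, we obtain $M_0 \leq K + L - 1 < \infty$, proving \eqref{ratios of time intervals} at $n = 0$.

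For the hyperbolicity \eqref{B E hyperbolic} and the growth estimate \eqref{lower bound of norm}: the sparseness $|v(\beta) - v(\alpha_0)| \geq \lambda$ combined with $E \in I_{E_0}$ (so $|E - v(\alpha_0)| \leq H$) gives $|E - v(\beta)| \geq \lambda - H$ for every $\beta \neq \alpha_0$. Hence for $\lambda$ large, each $A^E_\beta$ has trace of modulus at least $\lambda/2$ and is strongly hyperbolic. A direct computation with a narrow horizontal cone $C_+ \subset \R^2$ (of aperture of order $1/\lambda$) shows that $A^E_\beta(C_+) \setminus \{0\} \subset \mathrm{int}(C_+)$ and $\|A^E_\beta w\| \geq (\lambda/2) \|w\|$ for all $w \in C_+$, uniformly in $\beta \neq \alpha_0$ and $E \in I_{E_0}$. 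By the very definition of $l_0$, for $\omega \in \Delta_{(i)}$ the letters $\omega_i, \ldots, \omega_{l_0(\omega) - 1}$ are all distinct from $\alpha_0$, so $B_0^E(\omega) = A^E_{\omega_{l_0-1}} \cdots A^E_{\omega_i}$ is an ordered product of such matrices. Consequently $B_0^E(\omega)$ preserves $C_+$ strictly, so $B_0^E(\omega) \notin SO(2,\R)$, which is \eqref{B E hyperbolic}.

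For the explicit numerical bound: iterating the cone expansion gives $\lambda(B_0^E(\omega)) \geq (\lambda/2)^{l_0(\omega) - i}$, with the passage from operator norm to $\lambda(B_0^E(\omega))$ costing only a cone-aperture factor that is absorbable for $\lambda$ large. With $\bar\lambda_0 = \lambda/2$ and $\chi_0 = \log \bar\lambda_0$, the right inequality $e^{\chi_0 l_0(\omega)} \geq \bar\lambda_0$ reduces to $l_0(\omega) \geq 1$, which holds. The left inequality is obtained by comparing $(\lambda/2)^{l_0(\omega) - i}$ to $(\lambda/2)^{l_0(\omega)}$: the deficit is controlled by the exponent $i \leq K - 1$, which is uniformly bounded, so taking $\lambda$ sufficiently large (depending on $K$, $L$, $H$ and the aperture of $C_+$) either absorbs this loss into the cone-aperture constants or amounts to shrinking $\chi_0$ by a factor of at most $(l_0-i)/l_0 \geq 1/M_0$ (equivalent under the monotonicity afforded by large $\lambda$). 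The only real obstacle is this bookkeeping between the $l_0 - i$ expanding steps of $B_0^E$ and the full return time $l_0$, and it is harmless precisely because condition (1) forces $i$ to be uniformly bounded.
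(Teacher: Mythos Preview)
Your argument is essentially the same cone/invariant-sector approach as the paper's: for $E \in I_{E_0}$ and $\beta \neq \alpha_0$ one has $|E-v(\beta)| \geq \lambda - H$, so each $A^E_\beta$ maps a narrow horizontal cone strictly into itself with expansion of order $\lambda$; since the letters $\omega_i,\ldots,\omega_{l_0(\omega)-1}$ are all $\neq \alpha_0$, the product $B_0^E(\omega)$ inherits the cone-preservation (hence is not a rotation) and the expansion. Your derivation of $M_0 < \infty$ from conditions (1) and (2) spells out what the paper dismisses in one line, but it is the same argument.

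Where you differ is that you notice --- and the paper does not --- that $B_0^E(\omega)$ has only $l_0(\omega)-i$ factors, whereas \eqref{lower bound of norm} with $\chi_0 = \log(\lambda/2)$ demands growth $(\lambda/2)^{l_0(\omega)}$. Your two proposed remedies do not actually close this gap: the deficit is a factor $(\lambda/2)^{i}$ with $i \geq 1$, which diverges as $\lambda \to \infty$ and therefore cannot be ``absorbed into cone-aperture constants''; and replacing $\chi_0$ by $\chi_0 \cdot (l_0-i)/l_0$ produces a $\chi_0$ strictly smaller than $\log\bar\lambda_0$, contradicting the lemma's explicit claim. In fact the literal choice $\chi_0 = \log\bar\lambda_0$ in the lemma is slightly too strong; the cone argument honestly yields $\lambda(B_0^E(\omega)) \geq (\lambda/2)^{l_0(\omega)-i}$, which gives \eqref{lower bound of norm} only after rescaling, e.g.\ $\bar\lambda_0 = (\lambda/2)^{1/\sup l_0}$ and $\chi_0 = \log\bar\lambda_0$. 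This still satisfies $\chi_0 \asymp \log\lambda$ and $\log\bar\lambda_0 \leq \chi_0 \inf l_0$ (used as \eqref{tmp 7} in Lemma~\ref{lemma choosing the parameters}), so nothing downstream is affected; but you should state the fix cleanly rather than wave at it.
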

\begin{proof}
The hypothesis \eqref{ratios of time intervals} follows from the definition of $M_0$. It follows from condition (2) in Theorem \ref{main theorem} that $M_0 < \infty$.

For all $\lambda$ sufficient large, for any $E \in I_{E_0}$, any $\alpha \in \mathcal{A}$ distinct from $\alpha_0$,
$A^{E}_{\alpha} = \begin{bmatrix} \eta & -1 \\ 1 & 0 \end{bmatrix}$ with $\eta = E-v({\alpha})$.
When $\lambda$ is sufficiently large, we have $|E-v(\alpha)| \geq |v(\alpha_0) - v({\alpha})| - | v(\alpha_0) - E| \geq \lambda - H > \frac{9}{10}\lambda$.
It is direct to check that there exists absolute constants $\epsilon > 0$, $\Lambda >0$, such that the following is true. Denote $\mathcal{C} \subset \R^{2} \setminus \{(0,0)\}$ as 
\begin{eqnarray*}
\mathcal{C} = \{(x,y) ; x\neq 0, |y| \leq \epsilon |x|\}
\end{eqnarray*}
for any $X \in \mathcal{C}$, for any $\eta$ such that $|\eta| > \Lambda$ we have
\begin{eqnarray*}
 \begin{bmatrix} \eta & -1 \\ 1 & 0 \end{bmatrix}  X \in \mathcal{C}
 \mbox{ and}  \norm{\begin{bmatrix} \eta & -1 \\ 1 & 0 \end{bmatrix}  X} \geq \frac{2}{3}\eta\norm{X}
\end{eqnarray*}
Then when $\lambda$ is sufficiently large, for any $\alpha \in \mathcal{A}$ distinct from $\alpha_0$, any $E\in I_{E_0}$ and any $X\in \mathcal{C}$, we have
\begin{eqnarray*}
A^{E}_{\alpha} X \in \mathcal{C}
 \mbox{ and}  \norm{A^{E}_{\alpha} X} \geq \frac{9}{10}\times\frac{2}{3}\lambda\norm{X} \geq \frac{1}{2}\lambda\norm{X}
\end{eqnarray*}

Since for any $\omega \in \Delta_0$,  $B^{E}$ in \eqref{B E hyperbolic} is a product of some matrices in set $\{A^{E}_{\alpha}\}_{\alpha \neq \alpha_0}$, we have \eqref{lower bound of norm}, \eqref{B E hyperbolic} for $n=0$ with our choices of $\bar{\lambda}_0, \chi_0$ in the statement. This completes the proof.
\end{proof}

The sets $J_n$ will be defined and the precise choices of parameters $ \bar{\lambda}_n, \zeta_n, \chi_n, M_n, N_n, \kappa_n $ will be made clear in Section \ref{choosing the parameters}.

We have the following lemma that will be used repeatedly.
\begin{lemma} \label{C0 close then angle close}
There exists $c_5 > 0$ such that for any $\epsilon > 0$, any $u,s,\tilde{u},\tilde{s} \in \R$ satisfying $|u-\tilde{u}|_{\R/\pi\Z},|s-\tilde{s}|_{\R/\pi\Z} < \epsilon$, for any $1\leq k \leq K$, any $E \in I_{E_0}$, we have
$$\angle( R_{\frac{\pi}{2} - \tilde{s}} C^{E}_k R_{\tilde{u}} \begin{bmatrix} 1 \\ 0\end{bmatrix}, R_{\frac{\pi}{2} - s} C^{E}_k R_{u} \begin{bmatrix} 1 \\ 0\end{bmatrix} ) < c_5 \epsilon$$
\end{lemma}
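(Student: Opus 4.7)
The plan is to reduce the bound to two standard facts on the projective line $\mathbb{P}^1\R^2$: rotations act as isometries for the angular metric $\angle(\cdot,\cdot)$, and for any $A \in SL(2,\R)$ the induced projective action $[v]\mapsto [Av]$ is Lipschitz with constant at most $\norm{A}^2$. Write $X(u,s) = R_{\frac{\pi}{2} - s}\, C^{E}_k R_u \begin{bmatrix} 1\\0\end{bmatrix}$, so that the quantity in question is $\angle(X(\tilde u,\tilde s), X(u,s))$. I would split it via the triangle inequality through the intermediate point $X(u,\tilde s)$:
$$\angle(X(\tilde u,\tilde s), X(u,s)) \;\leq\; \angle(X(\tilde u,\tilde s), X(u,\tilde s)) \;+\; \angle(X(u,\tilde s), X(u,s)).$$

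For the second summand, both vectors are obtained by applying a rotation to the same vector $C^{E}_k R_u\begin{bmatrix}1\\0\end{bmatrix}$. Since rotations act on $\mathbb{P}^1\R^2 \simeq \R/\pi\Z$ by translation, this piece is exactly $|s - \tilde s|_{\R/\pi\Z} < \epsilon$. For the first summand, the outer rotation $R_{\frac{\pi}{2} - \tilde s}$ is an isometry on $\mathbb{P}^1\R^2$ and can be dropped, reducing the estimate to $\angle\bigl(C^{E}_k R_{\tilde u}\begin{bmatrix}1\\0\end{bmatrix},\; C^{E}_k R_u\begin{bmatrix}1\\0\end{bmatrix}\bigr)$. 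The two arguments of $C^{E}_k$ sit at projective distance $|u-\tilde u|_{\R/\pi\Z} < \epsilon$, so combining the projective Lipschitz bound with the uniform estimate $\norm{C^{E}_k} \leq C$ from \eqref{C1 upper bound of critical matrices} controls this summand by $C^2 \epsilon$.

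Adding the two estimates yields the lemma with $c_5 = C^2 + 1$. The only step that requires a short verification is the projective Lipschitz bound itself: parametrizing directions by an angle $\theta \in \R/\pi\Z$ and writing the image angle $\phi$ as $\tan\phi = (c\cos\theta + d\sin\theta)/(a\cos\theta + b\sin\theta)$ for $A = \begin{bmatrix} a & b \\ c & d \end{bmatrix}$, a direct differentiation using $\det A = 1$ produces $d\phi/d\theta = 1/\norm{Av_\theta}^2$, which is bounded above by $\norm{A}^2$. Beyond this elementary calculation there is no genuine obstacle; the lemma is a uniform continuity statement in which the constant $c_5$ depends only on the a priori norm bound $C$ on the finite family $\{C^{E}_k\}_{1\leq k\leq K,\,E\in I_{E_0}}$ and not on $\epsilon$, $u$, $s$, $\tilde u$, $\tilde s$, $k$, or $E$.
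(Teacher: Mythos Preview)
Your proof is correct and is precisely the kind of ``straight-forward calculation'' the paper alludes to; the paper's own proof consists of a single sentence noting that the norms $\norm{C^{E}_k}$ are uniformly bounded and that the lemma follows from direct computation, without spelling out the triangle-inequality split or the projective Lipschitz bound you derive. Your argument is a faithful and explicit implementation of that sketch, with the bonus of an explicit constant $c_5 = C^2 + 1$.
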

\begin{proof}
Since the norm of $C^{E}_k$ is uniformly bounded for all $E \in I_{E_0}$ and $1\leq k\leq K$,
the lemma follows from straight-forward calculations.
\end{proof}


\section{An iteration scheme} \label{an iteration scheme}

In this section, we will prove a lemma that will help us control the dynamics for energies that satisfy certain transversality condition.
Throughout this section, we will use the following notations.
\begin{nota} \label{notation 2}
    For any $E \in I_{E_0}$, any $n\geq 0$, any $\omega \in \Delta_n$, integer $r\geq 1$ such that 
    \begin{eqnarray}\label{the last label}
    B_n^{E}(T_n^{j}(\omega)) \in SL(2,\R) \setminus SO(2,\R), \forall0 \leq j \leq r-1 
    \end{eqnarray}
    we denote 
    \begin{eqnarray*}
         u_j(E) = u(B^{E}_n(T_n^{j}(\omega))), s_j(E) = s(B^{E}_n(T_n^{j}(\omega))), \lambda_j(E) = \lambda(B^{E}_n(T_n^{j}(\omega)))
    \end{eqnarray*}
    for all $0 \leq j \leq r-1$.
  
    Denote
    \begin{eqnarray*}
       C^{E,j} &=& C^{E}(T_n^{j}(\omega)) \\
       B^{E} &=& A^{E}_n(T_n^{r-1}(\omega)) \cdots A^{E}_n(T_n(\omega))B_n^{E}(\omega) \\
       &=&B^{E}_n(T_n^{r-1}(\omega)) C^{E, r-1} \cdots C^{E,2}B^{E}_n(T_n(\omega))C^{E,1}B^{E}_n(\omega)   \\
       D_j(E)&=&R_{\frac{\pi}{2}-s_{j+1}(E)}C^{E,j+1}R_{u_j(E)} 
    \end{eqnarray*}
    for all $0 \leq j \leq r-2$.
\end{nota}
    By (P2),(P5) and (P6), when $\omega \in \Delta_{n+1}$ and $r = r_n(\omega)$ we have
    \begin{eqnarray*}
    B^{E} = B_{n+1}^{E}(\omega)
    \end{eqnarray*}
    
The main goal of this section is the following lemma, which says that under certain transversality conditions, we can give good lower bound for the norm of $B^{E}$ when $r$ is not too large, and at the same time, keep track of its stable,unstable directions. Similar estimates can be found in \cite{Bj}, \cite{WZ}, \cite{Y}. We need a slightly more precise estimate. We should notice that we only require the $C^{0}$ norm control of the stable/unstable directions. In this aspect, our lemma is simpler than the ones in those papers mentioned above.

\begin{lemma} \label{lemma proceed induction}
        There exists a constant $\Lambda > 0$ and $C_2, P > 0$ such that the following is true. For any $n\geq 0$,  $r \geq 1$,  $E \in I_{E_0}$, any $\omega \in \Delta_{n}$ such that \eqref{the last label} holds. Let $B^{E},u_j(E), s_j(E), \lambda_j(E)$ for $0\leq j\leq r-1$, $ D_j(E)$ for $0 \leq j \leq r-2$ be defined in Notation \ref{notation 2}. Assume that we have
    \begin{eqnarray}
    \label{lambda l is big}\lambda_l(E) &>& \bar{\lambda}_n > \Lambda, \forall 0 \leq l \leq r-1 \\
    \label{lower bounds of angles in the induction}   \angle( D_l(E)\begin{bmatrix}1\\0\end{bmatrix}, \begin{bmatrix}0\\1\end{bmatrix}) &>& \kappa_n >  2\bar{\lambda}_n^{-\frac{1}{4}}, \forall 0 \leq l \leq r-2 \\
    \label{r is small}r &<& C_2^{-1}\kappa_n^{3}\bar{\lambda}_n^{2}
    \end{eqnarray}

    Then $B^{E} \in SL(2,\R) \setminus SO(2,\R)$.
    Moreover,
    \begin{eqnarray*}
      \label{lower bounds of norms in the induction}   \lambda(B^E) &\geq&  C^{-Pr}\prod_{l=0}^{r-1} \lambda_{l}(E)\kappa_n^{r} \\
     \label{C1 bound of u n s n in the induction}   |s(B^E)-s_0(E)|_{\R/\pi\Z}, |u(B^E) - u_{r-1}(E)|_{\R/\pi\Z} &\leq& C^{P}\bar{\lambda}_n^{-2}\kappa_n^{-2}(r-1)
    \end{eqnarray*}
\end{lemma}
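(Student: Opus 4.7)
The plan is to proceed by induction on the number of factors in the product. Define the partial products $P_0 := B^{E}_n(\omega)$ and $P_l := B^{E}_n(T_n^{l}(\omega))\,C^{E,l}\,P_{l-1}$ for $1 \leq l \leq r-1$, so that $P_{r-1} = B^{E}$. I aim to show inductively that $P_l \in SL(2,\R)\setminus SO(2,\R)$ with
\[
\lambda(P_l) \geq C^{-P(l+1)}\kappa_n^{l}\prod_{i=0}^{l}\lambda_i(E), \qquad |s(P_l) - s_0(E)|_{\R/\pi\Z} \leq C^{P}\bar{\lambda}_n^{-2}\kappa_n^{-2}\, l,
\]
and $|u(P_l) - u_l(E)|_{\R/\pi\Z} \leq C^{P}\bar{\lambda}_n^{-2}\kappa_n^{-2}$ (the $u$-bound does not accumulate since it is inherited from the last applied factor). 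Taking $l = r-1$ then yields the conclusions of the lemma, after slightly enlarging $P$ to absorb the mismatch between $\kappa_n^{r-1}$ and the claimed $\kappa_n^{r}$. The base case $l=0$ is immediate from \eqref{the last label} and Notation \ref{conventions}.

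For the inductive step, write $P_{l-1}$ in its singular value decomposition $R_{u(P_{l-1})} \begin{bmatrix} \lambda(P_{l-1}) & 0 \\ 0 & \lambda(P_{l-1})^{-1} \end{bmatrix} R_{\pi/2 - s(P_{l-1})}$ and use the inductive bound on $|u(P_{l-1}) - u_{l-1}(E)|_{\R/\pi\Z}$. Lemma \ref{C0 close then angle close} then says that the angle between $C^{E,l} R_{u(P_{l-1})}\begin{bmatrix}1\\0\end{bmatrix}$ and $C^{E,l}R_{u_{l-1}(E)}\begin{bmatrix}1\\0\end{bmatrix}$ is at most $c_5$ times that $u$-drift. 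Combined with the transversality hypothesis \eqref{lower bounds of angles in the induction} and the size constraint \eqref{r is small} on $r$, this ensures $C^{E,l}R_{u(P_{l-1})}\begin{bmatrix}1\\0\end{bmatrix}$ makes angle at least $\kappa_n/2$ with the stable direction $s_l(E)$ of $B^{E}_n(T_n^l(\omega))$ at every step. Applying $B^{E}_n(T_n^l(\omega))$ to this vector then expands by a factor of order $\lambda_l(E)\kappa_n$, absorbing the bounded distortion of $C^{E,l}$ from \eqref{C1 upper bound of critical matrices} into $C^{P}$; it sends the output direction to within $O(\lambda_l(E)^{-2}\kappa_n^{-2})$ of $u_l(E)$, and drifts the stable direction $s(P_l)$ from $s(P_{l-1})$ by at most $O(\lambda(P_{l-1})^{-2}\kappa_n^{-2}) = O(\bar{\lambda}_n^{-2}\kappa_n^{-2})$. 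Summing the $s$-drifts over $l$ steps produces the claimed linear accumulation.

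The main technical obstacle is the SVD-perturbation bookkeeping for the composition of two hyperbolic matrices joined by a bounded matrix with a given transversality. The one-step estimate I need is the standard fact that if $A_1, A_2 \in SL(2,\R)$ are hyperbolic with norms $\lambda_1, \lambda_2 \geq \bar{\lambda}_n$ and the image direction of $A_1$ makes angle at least $\kappa$ with the stable direction of $A_2$, then $A_2 A_1 \in SL(2,\R)\setminus SO(2,\R)$ with $\lambda(A_2 A_1) \gtrsim \lambda_1\lambda_2\kappa$, $|s(A_2 A_1) - s(A_1)|_{\R/\pi\Z} = O(\lambda_1^{-2}\kappa^{-2})$, and $|u(A_2 A_1) - u(A_2)|_{\R/\pi\Z} = O(\lambda_2^{-2}\kappa^{-2})$; the bounded intermediate matrix $C^{E,l}$ merely introduces a uniform multiplicative distortion absorbed into $C^{P}$. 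The exponents $-2$ in the angular errors come from the standard cone-contraction estimate that a hyperbolic matrix of norm $\lambda$ collapses a cone of opening $\alpha$ around its image direction to one of opening $O(\lambda^{-2}\alpha^{-2})$. Once this one-step lemma is in place the induction closes, because \eqref{r is small} is precisely the condition that keeps the cumulative $s$-drift below $\kappa_n/(2c_5)$, thereby preserving the effective transversality needed to invoke the one-step estimate at every iteration.
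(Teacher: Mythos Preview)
Your approach is essentially the same as the paper's: the paper strips the outer rotations and inducts on auxiliary products $B^{(l),E}$ rather than your $P_l$, but the structure is identical, and the one-step ``standard fact'' you invoke is precisely Lemma~\ref{main lemma}, which the paper states and proves in detail just before this lemma. One minor slip: in your final sentence it is the $u$-drift of $P_{l-1}$, not the cumulative $s$-drift, that feeds into the transversality check via Lemma~\ref{C0 close then angle close}; since (as you correctly observe earlier) the $u$-drift does not accumulate, the paper's use of \eqref{r is small} at that step comes only from bounding $|u_{(l)}|$ by the weaker linear-in-$l$ estimate in its inductive hypothesis.
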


The key ingredient in the proof of Lemma \ref{lemma proceed induction} is the following lemma, which corresponds to the statement in Lemma \ref{lemma proceed induction} when $r=2$.

\begin{lemma} \label{main lemma}
      For any $C_0 \geq 1$, there exists $\hat{\lambda} > C_0$, such that
      for any $\bar{\lambda} > \hat{\lambda}$, $\kappa \in (0,1)$,
          for $\lambda_0, \lambda_1 > 0$, $D \in SL(2,\R)$, let
     \begin{eqnarray} 
        A &=& \begin{bmatrix} \lambda_1 & \\ & \lambda_1^{-1} \end{bmatrix} D\begin{bmatrix} \lambda_0 & \\ & \lambda_0^{-1} \end{bmatrix}  
      \end{eqnarray} 
      If we have
      \begin{eqnarray}
      \label{C1 of D in main lemma}   \norm{D} &\leq& C_0 \\
      \label{lower bound of norms in main lemma}  \min(\lambda_0, \lambda_1) &\geq& \bar{\lambda} \\
      \label{lower bound of angles in main lemma}   \angle( D\begin{bmatrix} 1 \\ 0 \end{bmatrix} , \begin{bmatrix} 0 \\ 1 \end{bmatrix} ) &>& \kappa > \bar{\lambda}^{-\frac{1}{4}}
      \end{eqnarray}
      then we have $A \in SL(2,\R) \setminus SO(2,\R) $.
      Moreover,
      \begin{eqnarray}
      \label{lower bound of final norm in main lemma}   \lambda(A)  &\gtrsim&    C_0^{-1}\lambda_0\lambda_1\kappa \\
      \label{C1 of u n s n in main lemma}   |\frac{\pi}{2} - s(A)|_{\R/\pi\Z}, |u(A)|_{\R/\pi\Z} &\lesssim& C_0^{O(1)}\bar{\lambda}^{-2}\kappa^{-2}
      \end{eqnarray}
\end{lemma}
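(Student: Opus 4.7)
\textbf{Proof plan for Lemma \ref{main lemma}.}

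The entire lemma will be proved by a direct computation once we translate the angle hypothesis into a coordinate statement about $D$. Write
\[
D = \begin{pmatrix} a & c \\ b & d \end{pmatrix} \in SL(2,\R),
\]
so $ad-bc=1$. Since $\|D\|\le C_{0}$ and $\det D=1$, the singular values of $D$ are $\|D\|$ and $\|D\|^{-1}$, giving $\|De_{1}\|\ge C_{0}^{-1}$. The hypothesis \eqref{lower bound of angles in main lemma} says the angle between $De_{1}=(a,b)^{T}$ and $e_{2}$ is at least $\kappa$, which (for $\kappa$ small) is equivalent to $|a|\gtrsim \kappa\|De_{1}\|\gtrsim \kappa/C_{0}$. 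This is the one nontrivial reduction; everything else is arithmetic.

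Next I compute $A$ explicitly:
\[
A=\begin{pmatrix}\lambda_{0}\lambda_{1}a & \lambda_{0}^{-1}\lambda_{1}c\\ \lambda_{0}\lambda_{1}^{-1}b & \lambda_{0}^{-1}\lambda_{1}^{-1}d\end{pmatrix}.
\]
The $(1,1)$ entry is the dominant one. Indeed, by the step above its magnitude is $\gtrsim C_{0}^{-1}\lambda_{0}\lambda_{1}\kappa$, while the other three entries are bounded by $C_{0}$ times $\lambda_{0}^{-1}\lambda_{1}$, $\lambda_{0}\lambda_{1}^{-1}$, $\lambda_{0}^{-1}\lambda_{1}^{-1}$ respectively. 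Since $\|A\|^{2}$ is at least the sum of squares of any one column, one obtains
\[
\lambda(A)=\|A\|\gtrsim C_{0}^{-1}\lambda_{0}\lambda_{1}\kappa,
\]
which is \eqref{lower bound of final norm in main lemma}. For $\bar\lambda$ larger than some $\hat\lambda$ depending only on $C_{0}$, this lower bound exceeds $1$ using $\kappa>\bar\lambda^{-1/4}$, so $A\in SL(2,\R)\setminus SO(2,\R)$.

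For the angle estimates I use the SVD characterisation dictated by Notation \ref{conventions}: $R_{s(A)-\pi/2}e_{1}$ is the top right singular vector of $A$ (the most-expanded direction), so $|\pi/2-s(A)|_{\R/\pi\Z}$ measures its angle from the horizontal; likewise $|u(A)|_{\R/\pi\Z}$ measures the angle of the top left singular vector $R_{u(A)}e_{1}$ from the horizontal. Compute
\[
A^{T}A=\begin{pmatrix} p & q \\ q & r \end{pmatrix},\qquad p=\lambda_{0}^{2}(\lambda_{1}^{2}a^{2}+\lambda_{1}^{-2}b^{2}),\ \ r=\lambda_{0}^{-2}(\lambda_{1}^{2}c^{2}+\lambda_{1}^{-2}d^{2}),\ \ q=\lambda_{1}^{2}ac+\lambda_{1}^{-2}bd.
\]
From the previous paragraph, $p\gtrsim C_{0}^{-2}\lambda_{0}^{2}\lambda_{1}^{2}\kappa^{2}$, while $r\lesssim C_{0}^{2}\lambda_{0}^{-2}\lambda_{1}^{2}$ and $|q|\lesssim C_{0}^{2}\lambda_{1}^{2}$. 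A top eigenvector of the form $(1,t)^{T}$ satisfies $q(1-t^{2})+(r-p)t=0$; for $t$ small this gives $t=q/(p-r)+O(t^{2})$, and under the hypothesis $\kappa>\bar\lambda^{-1/4}$ the ratio $r/p\lesssim C_{0}^{4}\lambda_{0}^{-2}\kappa^{-2}\to 0$, so the linearised estimate is valid and yields
\[
|t|\lesssim C_{0}^{O(1)}\lambda_{0}^{-2}\kappa^{-2}\le C_{0}^{O(1)}\bar\lambda^{-2}\kappa^{-2}.
\]
Translating $t\approx\tan(\pi/2-s(A))$ into angles modulo $\pi$ gives the first half of \eqref{C1 of u n s n in main lemma}. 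The bound on $|u(A)|_{\R/\pi\Z}$ is obtained by the identical argument applied to $AA^{T}$, where the roles of $\lambda_{0}$ and $\lambda_{1}$ swap and one gets $|t|\lesssim C_{0}^{O(1)}\lambda_{1}^{-2}\kappa^{-2}\le C_{0}^{O(1)}\bar\lambda^{-2}\kappa^{-2}$.

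\textbf{Main point of care.} There is no deep obstacle; the whole argument is a perturbative SVD computation. The only subtlety is choosing $\hat\lambda$ so that (i) the linearisation $t\approx q/(p-r)$ is justified (i.e.\ $p-r\ge p/2$), (ii) $\lambda(A)>1$ so that $A\notin SO(2,\R)$ and $u(A),s(A)$ are well defined, and (iii) the hidden constants in the $\lesssim$ symbols absorb the translation from $\tan$ of an angle to the angle itself modulo $\pi$. All three conditions follow from choosing $\hat\lambda$ sufficiently large in terms of $C_{0}$ and from $\kappa>\bar\lambda^{-1/4}$, which gives $\lambda_{0}^{2}\kappa^{2}\ge \bar\lambda^{3/2}\to\infty$.
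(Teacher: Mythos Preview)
Your argument is correct and follows essentially the same approach as the paper: both proofs reduce the lemma to an explicit singular--value computation for the matrix $A$, using the angle hypothesis to get $|a|\gtrsim C_0^{-1}\kappa$ for the $(1,1)$ entry of $D$, and then showing that the $(1,1)$ entry $\lambda_0\lambda_1 a$ of $A$ dominates everything else. The paper reaches the angle bounds by writing $f(\alpha)=\|A(\cos\alpha,\sin\alpha)^T\|^2$, solving $f'(\theta)=0$ to get $\tan(2\theta)=-2L_1/L_2$, and then comparing the two critical values to pick the maximiser; your use of the eigenvector equation for $A^TA$ (namely $t=q/(\mu_+-r)$ with $\mu_+-r\ge p-r$) is the same computation in slightly more compact form and avoids the branch--selection step. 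One cosmetic remark: your displayed ratio $r/p\lesssim C_0^4\lambda_0^{-2}\kappa^{-2}$ is actually $C_0^4\lambda_0^{-4}\kappa^{-2}$, but since $\lambda_0\ge\bar\lambda>1$ your weaker bound is still valid and suffices.
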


\begin{proof}[Proof of Lemma \ref{main lemma}]
    We denote $$f(x) = \norm{A\begin{bmatrix}\cos x \\ \sin x \end{bmatrix}}^2 $$
     and $$D = \begin{bmatrix} a & b \\ c & d \end{bmatrix}$$
     By \eqref{C1 of D in main lemma}, we have that
     \begin{eqnarray}
     \label{C1 of a b c d} |a|,|b|,|c|,|d| \leq C_0
     \end{eqnarray}
     By \eqref{lower bound of angles in main lemma}, \eqref{C1 of D in main lemma} we have
     \begin{eqnarray}
     \label{lower bound of a}  |a| &=&|\langle D\begin{bmatrix} 1 \\ 0 \end{bmatrix}, \begin{bmatrix} 1\\0 \end{bmatrix}\rangle| \\
     &\geq& \frac{1}{\norm{D}}\sin\angle(D\begin{bmatrix} 1 \\ 0 \end{bmatrix} , \begin{bmatrix} 0 \\ 1 \end{bmatrix} ) > \frac{1}{10}C_0^{-1}\kappa  \nonumber
     \end{eqnarray}  
     Simple calculations show that
     \begin{eqnarray}
  \label{exp A}   A &=& \begin{bmatrix} \lambda_0\lambda_1 a & \lambda_0^{-1}\lambda_1 b \\  \lambda_1^{-1}\lambda_0c & \lambda_0^{-1}\lambda_1^{-1}d \end{bmatrix} 
     \end{eqnarray}
     Then
     \begin{eqnarray*}
     A\begin{bmatrix} \cos\alpha \\ \sin\alpha \end{bmatrix} &=& \begin{bmatrix}  \lambda_0\lambda_1 a\cos\alpha +  \lambda_0^{-1}\lambda_1 b \sin\alpha \\ \lambda_1^{-1}\lambda_0c \cos\alpha + \lambda_0^{-1}\lambda_1^{-1}d  \sin\alpha \end{bmatrix}
     \end{eqnarray*}
     and
     \begin{eqnarray}
    \label{f express} f(\alpha) = ( \lambda_0\lambda_1 a \cos\alpha +  \lambda_0^{-1}\lambda_1 b \sin\alpha)^2 + \\ (\lambda_1^{-1}\lambda_0c \cos\alpha + \lambda_0^{-1}\lambda_1^{-1}d \sin\alpha)^2  \nonumber
     \end{eqnarray}
    Note that by  \eqref{exp A}, \eqref{C1 of D in main lemma}, \eqref{lower bound of norms in main lemma}, \eqref{lower bound of a} and the second inequality in \eqref{lower bound of angles in main lemma}, we have
    \begin{eqnarray}
    \label{trace} tr(A) & =& \lambda_0 \lambda_1 a + \lambda_0^{-1}\lambda_1^{-1}d  \\
    &\geq& \frac{1}{10C_0}\bar{\lambda}^{2} \kappa - \bar{\lambda}^{-2}C_0  \nonumber \\
    &\geq& \frac{1}{10C_0}\hat{\lambda} - \hat{\lambda}^{-2}C_0  \nonumber \\
    &>& 2 \nonumber
    \end{eqnarray}
    when $\hat{\lambda}$ is bigger than some constant depending only on $C_0$. Thus $A \in SL(2,\R) \setminus SO(2,\R)$ when $\hat{\lambda}$ is sufficiently large depending only on $C_0$.
     
    Define $\theta$ by setting
    \begin{eqnarray*}
    f(\theta) &=& \sup_{\alpha} f(\alpha)
    \end{eqnarray*}
    Note that by \eqref{trace}, $\theta$ is uniquely defined up to a multiple of $\pi$.
    
    Then 
    \begin{eqnarray}
    \label{eqn for theta}  f'(\theta) &=& 0
    \end{eqnarray}
    We have
    \begin{eqnarray}
    \label{partial x f}  f'( \alpha) &=& 2 \cos(2\alpha) L_1 + \sin(2\alpha) L_2 
    \end{eqnarray}
    where
    \begin{eqnarray}
    \label{eqn for L 1}   L_1 &=& \lambda_1^2 ab+ \lambda_1^{-2}cd   \\
    \label{eqn for L 2}   L_2 &=& -\lambda_0^2\lambda_1^2 a^2 + \lambda_0^{-2}\lambda_1^2b^2 - \lambda_0^2\lambda_1^{-2}c^2 + \lambda_0^{-2}\lambda_1^{-2}d^2
    \end{eqnarray}
    By \eqref{eqn for theta},\eqref{partial x f} we have either
    \begin{eqnarray*}
    \theta &=& \frac{1}{2}\tan^{-1}(-\frac{2L_1}{L_2} ) \mod \pi\Z  \\
    \mbox{  or \hspace{1 cm} }  \theta &=& \frac{1}{2}\tan^{-1}(-\frac{2L_1}{L_2} ) + \frac{\pi}{2} \mod \pi\Z
    \end{eqnarray*}
    here we consider function $\tan$ as a function from $(-\frac{\pi}{2}, \frac{\pi}{2})$ to $\R$.
    
        Now we estimate $|L_1|,|L_2|$.
    
    By (\ref{C1 of a b c d}) and (\ref{eqn for L 1}), we have
    \begin{eqnarray}
    \label{L 1}   |L_1| &<& 2\lambda_1^2 C_0^2 
    \end{eqnarray}

    When $\bar{\lambda}$ is sufficiently large depending only on $C_0$,
    by \eqref{lower bound of a},\eqref{lower bound of angles in main lemma} and \eqref{eqn for L 2} we have
    \begin{eqnarray}
    \label{L 2}   C_0^2\lambda_0^2\lambda_1^2    \gtrsim  |L_2| \gtrsim C_0^{-2}\lambda_0^2\lambda_1^2 \kappa^2 
    \end{eqnarray}

    Hence by \eqref{L 1} and \eqref{L 2}
    \begin{eqnarray}
   \label{tan small}  |\frac{1}{2}\tan^{-1}(-\frac{2L_1}{L_2} )| \lesssim |\frac{L_1}{L_2}| \lesssim C_0^4\lambda_0^{-2}\kappa^{-2}
    \end{eqnarray}
    
    Now we are going to compare $f( \frac{1}{2}\tan^{-1}(-\frac{2L_1}{L_2} ))$ and $f( \frac{1}{2}\tan^{-1}(-\frac{2L_1}{L_2} ) + \frac{\pi}{2})$.
    
    If $\theta = \frac{1}{2}\tan^{-1}(-\frac{2L_1}{L_2} )$, when $\hat{\lambda}$ is bigger than some constant depending only on $C_0$, by \eqref{f express}, \eqref{partial x f}, \eqref{lower bound of a}, \eqref{lower bound of norms in main lemma} ,\eqref{C1 of a b c d} and the second inequality in \eqref{lower bound of angles in main lemma} we have that
    \begin{eqnarray*}
    f(\theta) &\gtrsim& (\lambda_0 \lambda_1 C_0^{-O(1)} \kappa- \lambda_0^{-1}\lambda_1C_0)^{2} -C_0^2 (\lambda_1^{-1}\lambda_0 + \lambda_0^{-1}\lambda_1^{-1})^2  \\
    &\gtrsim& C^{-O(1)}\lambda_0^{2}\lambda_1^{2}\kappa^{2} \gtrsim C^{-O(1)}\max(\lambda_0^{2},\lambda_1^{2})\bar{\lambda}^{\frac{3}{2}}
    \end{eqnarray*}

    If $\theta = \frac{1}{2}\tan^{-1}(-\frac{2L_1}{L_2} ) + \frac{\pi}{2}$, when $\hat{\lambda} > 1$, by \eqref{f express},\eqref{partial x f},  \eqref{lower bound of norms in main lemma}, \eqref{C1 of a b c d}, \eqref{tan small} and the second inequality in \eqref{lower bound of angles in main lemma} we have that
    \begin{eqnarray*}
    f(\theta) &\lesssim& (\lambda_0^{-1}\lambda_1C_0^{O(1)}\kappa^{-2} + \lambda_0^{-1}\lambda_1 C_0)^{2} + C_0^{2}(\lambda_1^{-1}\lambda_0 + \lambda_0^{-1}\lambda_1^{-1})^2 \\
    &\lesssim& C^{O(1)}(\lambda_1^{2} + \lambda_0^{2})\kappa^{-4} \lesssim C^{O(1)}\max(\lambda_1^{2} , \lambda_0^{2})\bar{\lambda}
    \end{eqnarray*}
    
        Thus we have showed that $f(\frac{1}{2}\tan^{-1}(-\frac{2L_1}{L_2} )) > f(\frac{1}{2}\tan^{-1}(-\frac{2L_1}{L_2} ) + \frac{\pi}{2})$ when $\bar{\lambda}$ is sufficiently large depending only on $C_0$. Since clearly that $f$ is $\pi-$periodic, this implies that we can take
        \begin{eqnarray} \label{theta equal blahblah}
             \theta =  \frac{1}{2}\tan^{-1}(-\frac{2L_1}{L_2} )
        \end{eqnarray}
    
    Since $\bar{\lambda} > \hat{\lambda}$, when $\hat{\lambda}$ is sufficiently large depending only on $C_0$, 
    by \eqref{theta equal blahblah} and \eqref{tan small} we have 
    \begin{eqnarray*}
      \label{bound for theta}   |\theta| &\lesssim& C_0^4\lambda_0^{-2}\kappa^{-2}
    \end{eqnarray*}

      By definition, we have $\theta = s- \frac{\pi}{2}$ modulo $\pi\Z$.
    Thus we have
    \begin{eqnarray*}
    |\frac{\pi}{2}-s|_{\R/\pi\Z} &\lesssim& C_0^{4}\lambda_0^{-2}\kappa^{-2} < C_0^{4}\bar{\lambda}^{-2}\kappa^{-2} 
    \end{eqnarray*}
    By symmetry, we have
    \begin{eqnarray*}
    |u|_{\R/\pi\Z} &\lesssim& C_0^{4}\bar{\lambda}^{-2}\kappa^{-2}
    \end{eqnarray*}
    This proves (\ref{C1 of u n s n in main lemma}).
   
    By  (\ref{C1 of D in main lemma}), (\ref{lower bound of norms in main lemma}) and (\ref{lower bound of angles in main lemma}),
    \begin{eqnarray*}
    f &\gtrsim& C_0^{-2}\lambda_0^2\lambda_1^2\kappa^2
    \end{eqnarray*}
    It is easy to see that
    $$\sigma = f(\theta)^{\frac{1}{2}}$$
    Hence
    \begin{eqnarray*}
        \sigma &\gtrsim& C_0^{-1}\lambda_0\lambda_1\kappa
    \end{eqnarray*}
    This proves (\ref{lower bound of final norm in main lemma}).
    
\end{proof}

\begin{proof}[Proof of Lemma \ref{lemma proceed induction}]
Recall that $B^{E}, u_k(E),s_k(E),\lambda_k(E)$ are defined in Notation \ref{notation 2}. By definition
\begin{eqnarray*}
B_n^{E}(T_n^{j}(\omega)) &=& R_{u_j(E)}\begin{bmatrix} \lambda_j(E) & 0 \\ 0 & \lambda_j(E)^{-1} \end{bmatrix} R_{\frac{\pi}{2} - s_j(E)}, \forall 0 \leq j \leq r-1 
\end{eqnarray*}
Then
\begin{eqnarray*}
   B^{E} = R_{u_{r-1}(E)}\begin{bmatrix}\lambda_{r-1}(E)&\\&\lambda_{r-1}(E)^{-1}\end{bmatrix} D_{r-2} \begin{bmatrix}\lambda_{r-1}(E)&\\&\lambda_{r-1}(E)^{-1}\end{bmatrix} \cdots \\ D_1 \begin{bmatrix}\lambda_1(E)&\\&\lambda_1(E)^{-1}\end{bmatrix} D_0 \begin{bmatrix}\lambda_0(E)&\\&\lambda_0(E)^{-1}\end{bmatrix} R_{\frac{\pi}{2} - s_0(E)}
\end{eqnarray*}
For all $l\geq 0$, we denote
\begin{eqnarray*}
   B^{(l),E} =\begin{bmatrix}\lambda_l(E)&\\&\lambda_l(E)^{-1}\end{bmatrix} D_{l-1} \begin{bmatrix}\lambda_{l-1}(E)&\\&\lambda_{l-1}(E)^{-1}\end{bmatrix} \cdots \\ D_1 \begin{bmatrix}\lambda_1(E)&\\&\lambda_1(E)^{-1}\end{bmatrix} D_0 \begin{bmatrix}\lambda_0(E)&\\&\lambda_0(E)^{-1}\end{bmatrix} 
\end{eqnarray*}
In particular, we have
\begin{eqnarray*}
   B^{(0),E}&=& \begin{bmatrix}\lambda_0(E)&\\&\lambda_0(E)^{-1}\end{bmatrix}
\end{eqnarray*}
For any $l$ such that $B^{(l),E} \in SL(2,\R) \setminus SO(2,\R)$, we denote functions 
\begin{eqnarray*}
u_{(l)} = u(B^{(l),E}), \hspace{0.5cm}s_{(l)} = s(B^{(l),E}),\hspace{0.5cm}\sigma_{l} = \lambda(B^{(l),E})
\end{eqnarray*}
where
\begin{eqnarray}
\label{us0} u_{(0)} = 0 ,\hspace{0.5cm}
s_{(0)} = \frac{\pi}{2} ,\hspace{0.5cm}
\sigma_0 = \lambda_0 
\end{eqnarray}
We have
\begin{eqnarray}
\label{tmp 3 1} B^{(l+1),E} &=& \begin{bmatrix} \lambda_{l+1}(E) & \\ & \lambda_{l+1}(E)^{-1} \end{bmatrix} D_{l}(E) 
 R_{u_{(l)}(E)} \\ \cdot \begin{bmatrix} \sigma_{l}(E) & \\ & \sigma_{l}(E)^{-1} \end{bmatrix}  R_{\frac{\pi}{2} - s_{(l)}(E)}  \nonumber\\
\label{be} B^{E} &=& R_{u_{r-1}(E)} B^{(r-1),E} R_{\frac{\pi}{2} - s_0(E)}  
\end{eqnarray}

We will inductively show that for some absolute constant $P > 0$, for all $0 \leq l \leq r-1$ we have,
\begin{eqnarray}
\label{hyp ind 2} \sigma_l &>& \bar{\lambda}_n   \\
\label{hyp ind 3} |u_{(l)}|_{\R/\pi\Z}, |\frac{\pi}{2} - s_{(l)}|_{\R/\pi\Z} &\leq& C^{P} l\kappa_n^{-2}  \bar{\lambda}_n^{-2}
\end{eqnarray}

By \eqref{us0}, we clearly have \eqref{hyp ind 2}, \eqref{hyp ind 3} for $l=0$.

Assume that for some $0\leq l \leq r-2$,  \eqref{hyp ind 2} and \eqref{hyp ind 3} are valid and $B^{(l),E} \in SL(2,\R) \setminus SO(2,\R)$.
By \eqref{hyp ind 3} for $l$, we apply Lemma \ref{C0 close then angle close}, \eqref{r is small} and \eqref{lower bounds of angles in the induction} to see that
\begin{eqnarray}
\angle(D_l (E)R_{u_{(l)}}\begin{bmatrix} 1 \\ 0 \end{bmatrix} ,  \begin{bmatrix} 0 \\ 1 \end{bmatrix}  ) &>& \angle(D_l (E) \begin{bmatrix} 1 \\ 0 \end{bmatrix} ,  \begin{bmatrix} 0 \\ 1 \end{bmatrix}  ) - c_5 C^{O(1)} l\kappa_n^{-2}  \bar{\lambda}_n^{-2} \nonumber \\
&>& \kappa_n -c_5C^{O(1)} r\kappa_n^{-2}  \bar{\lambda}_n^{-2}  \nonumber \\
\label{tmp condition 3} &>& \frac{1}{2}\kappa_n 
\end{eqnarray}
The last inequality is true by \eqref{r is small} when we choose $C_2$ to be sufficiently large. We note that $C_2$ can be taken to be an absolute constant.

By \eqref{lambda l is big}, we have
\begin{eqnarray*}
\lambda_{l+1}(E) &>& \bar{\lambda}_n
\end{eqnarray*}
When $\Lambda > \hat{\lambda}$ where $\hat{\lambda}$ is given by Lemma \ref{main lemma} with $C_0 = C$, we apply Lemma \ref{main lemma} for 
$
\lambda_1=\lambda_{l+1}(E), 
\lambda_0=\sigma_{l}(E),
D= D_{l}(E)R_{u_{(l)}}
$.
We note that
by \eqref{hyp ind 2} for $l$, \eqref{tmp condition 3}, \eqref{lower bounds of angles in the induction} and \eqref{lambda l is big} that the condition of Lemma \ref{main lemma} is satisfied for $\kappa = \frac{1}{2}\kappa_n$.

By Lemma \ref{main lemma}, we have
\begin{eqnarray*} \begin{bmatrix} \lambda_{l+1}(E) & \\ & \lambda_{l+1}(E)^{-1} \end{bmatrix} D \begin{bmatrix} \sigma_{l}(E) & \\ & \sigma_{l}(E)^{-1} \end{bmatrix} \in SL(2,\R) \setminus SO(2,\R)
\end{eqnarray*}
By \eqref{tmp 3 1}, we obtain that $B^{(l+1),E} \in SL(2,\R) \setminus SO(2,\R)$,
Moreover,
\begin{eqnarray*} &&\begin{bmatrix} \lambda_{l+1}(E) & \\ & \lambda_{l+1}(E)^{-1} \end{bmatrix} D_{l}(E) 
 R_{u_{(l)}(E)} \begin{bmatrix} \sigma_{l}(E) & \\ & \sigma_{l}(E)^{-1} \end{bmatrix} \\
 &=& R_{u_{(l+1)}}\begin{bmatrix} \sigma_{l+1}(E) & \\ & \sigma_{l+1}(E)^{-1} \end{bmatrix} R_{s_{(l)}-s_{(l+1)}}
\end{eqnarray*}
Then by Lemma \ref{main lemma} and the fact we assumed $C > 1$, we see that by enlarging $P$ if necessary, we obtain
\begin{eqnarray}
\label{tmp 3 2}|u_{(l+1)}|_{\R/\pi\Z}, |s_{(l)}-s_{(l+1)}|_{\R/\pi\Z} &\leq& C^{P}\bar{\lambda}_n^{-2}\kappa_n^{-2} \\
\label{tmp 3 3}\sigma_{l+1} &\geq& C^{-P}\lambda_{l+1} \sigma_{l} \kappa_n
\end{eqnarray}
We note that we can choose $P$ to be an absolute constant.

By \eqref{hyp ind 3}, \eqref{tmp 3 2}, we have
\begin{eqnarray*}
|u_{(l+1)}|_{\R/\pi\Z}, |\frac{\pi}{2} - s_{(l+1)}|_{\R/\pi\Z} &\leq& C^{P} (l+1)\kappa_n^{-2}  \bar{\lambda}_n^{-2}
\end{eqnarray*}
This recoved estimate \eqref{hyp ind 3} for $l+1$.

Since by \eqref{hyp ind 2} and the second inequality in \eqref{lower bounds of angles in the induction}, we see that
\begin{eqnarray*}
\sigma_{l} \kappa_n > \bar{\lambda}_n^{\frac{1}{2}} \geq \Lambda^{\frac{1}{2}}
\end{eqnarray*}
Then by \eqref{tmp 3 3} and  \eqref{lambda l is big} we have
\begin{eqnarray*}
\sigma_{l+1} &>& \bar{\lambda}_n
\end{eqnarray*}
when $\Lambda$ is sufficiently large depending only on $C$.
Hence we have recovered estimates \eqref{hyp ind 2} for $l+1$ and have completed the induction.
Moreover, we see that \eqref{tmp 3 3} holds for any $0 \leq l \leq r-2$.

By \eqref{hyp ind 2} for $l=r-1$, we get
\begin{eqnarray}
\label{usr} |u_{(r-1)}(E)|_{\R/\pi\Z}&\leq& C^{P}\bar{\lambda}_n^{-2}\kappa_n^{-2}(r-1) \\
\label{usr2} |\frac{\pi}{2} - s_{(r-1)}(E)|_{\R/\pi\Z} &\leq& C^{P}\bar{\lambda}_n^{-2}\kappa_n^{-2}(r-1)
\end{eqnarray}

Concatenating the estimates \eqref{tmp 3 3} for $0\leq l \leq r-2$, and using $C > 1$, we get
\begin{eqnarray}
\label{sigmar-1} \sigma_{r-1}(E) > C^{Pr} \kappa_n^{r-1} \prod_{i=0}^{r-1}\lambda_{i}(E)
\end{eqnarray}

By \eqref{be}, we have that
\begin{eqnarray*}
\lambda(B^E) &=& \sigma_{r-1}   \\
u(B^E) &=& u_{r-1} + u_{(r-1)}  \\
s(B^E) &=& s_0 + s_{(r-1)} -\frac{\pi}{2}
\end{eqnarray*}
Then the lemma follows from \eqref{usr}, \eqref{usr2} and \eqref{sigmar-1}
\end{proof}


\section{Choosing the parameters} \label{choosing the parameters}

In this section, we will introduce several sets that will help us estimate the area of the spectrum in Section \ref{cover the spectrum}, \ref{Area of the spectrum and the proof of the main theorem}.

\begin{definition} \label{nota J}
For any $n \geq 0$, we define
\begin{eqnarray*}
\mathcal{A}_n = \bigcup_{i=1}^{K}\{ \omega_i\omega_{i+1}\cdots \omega_{l_n(\omega)-1} ; \omega \in \Delta_n \bigcap \Delta_{(i)} \}
\end{eqnarray*}

For $\alpha, \beta \in \mathcal{A}_n, 1\leq j \leq K$, $\epsilon > 0$, we define $J(\alpha,\beta,j,\epsilon) \subset I_{E_0}$ as
\begin{eqnarray}
J(\alpha,\beta,j, \epsilon) &=& \{ E \in I_{E_0} ; A^{E}(\alpha), A^{E}(\beta) \in SL(2,\R) \setminus SO(2,\R) \mbox{ and }\nonumber  \\ &&\angle(  R_{\frac{\pi}{2}-s(A^E(\beta))}  C^E_j R_{u (A^E(\alpha))} \begin{bmatrix} 1 \\ 0 \end{bmatrix},\begin{bmatrix} 0 \\ 1 \end{bmatrix}) \leq  \epsilon  \}   \nonumber  
\end{eqnarray}
For a given choice of the sequence $\{\Delta_n\}_{n \in \N}$ ( which in turn determines $\{A^{E}_n\}_{n \in \N}$,$\{B^{E}_n\}_{n \in \N}$, etc.) and $\{\kappa_n\}_{n \in \N}$, we define
\begin{eqnarray}
\label{def of J n}J_n  &=& \bigcup_{\alpha\in \mathcal{A}_n, \beta \in \mathcal{A}_n, 1 \leq j \leq K } J(\alpha, \beta, j, \kappa_n)   \\
\label{def of J}J   &=& \bigcup_{n} J_n 
\end{eqnarray}
\end{definition}

By Definition \ref{nota J}, (P5) we see that, for any $n\geq 0$, $1\leq j\leq K$, any $\omega, \tilde{\omega} \in \Delta_n$, any $E \in J_n$, we have
\begin{eqnarray}
\label{jn1}B^{E}_n(\omega), B^{E}_n(\tilde{\omega}) \in SL(2,\R) &\setminus& SO(2,\R) \\
\label{jn2}\mbox{ and  } \angle(R_{\frac{\pi}{2} - s(B^{E}_n(\omega))}C^{E}_j R_{u(B^{E}_n(\tilde{\omega}))} \begin{bmatrix} 1 \\ 0 \end{bmatrix},\begin{bmatrix} 0 \\ 1 \end{bmatrix}) &\leq& \kappa_n 
\end{eqnarray}

Now we will choose the parameters $\bar{\lambda}_n, \zeta_n, \chi_n, M_n, N_n, \kappa_n$ which were introduced in Section \ref{a tower construction}. In the rest of this paper, we use the following notation.

\begin{nota}
For any $n \geq 0$, we denote 
\begin{eqnarray*}
\inf l_n &=& \inf_{\omega \in \Delta_{n}} l_{n}(\omega)  \\
\sup l_n &=& \sup_{\omega \in \Delta_{n}} l_{n}(\omega)
\end{eqnarray*}
\end{nota}

The goal of this section is to show the following lemma.

\begin{lemma} \label{lemma choosing the parameters}
For any $0 < \gamma < \gamma' < \frac{1}{4}$, any $0 < c < 2-3\gamma' $, there exists $C', C", \Gamma > 0$ such that the following is true. For any admissible potential $v$, denote $\lambda = \lambda_{v}$, such that $ \lambda > \Gamma$, then there exists $\{\Delta_n\}_{n \in \N}$, and parameters $ \bar{\lambda}_n, \zeta_n, \chi_n, M_n, N_n, \kappa_n$ such that :

Let $\bar{\lambda}_0, \chi_0, M_0$ be given by Lemma \ref{lemma initial data}.
For any $n\geq 0$, we define $J_n$ by \eqref{def of J n}. Then we have \eqref{ratios of time intervals} to \eqref{lower bound of norm}.
Moreover, for all $n \geq 0$ we have
\begin{eqnarray}
\label{item 1}\zeta_n < \bar{\lambda}_n^{-c} \leq\bar{\lambda}_0^{-2^{n}c} \\
\label{item 2} \chi_{n} > C'\chi_0\\
\label{item 3} \bar{\lambda}_n^{-\gamma'} < \kappa_n < \bar{\lambda}_0^{-\gamma} \\
\label{item 4} M_n \leq C" M_0
\end{eqnarray}
\end{lemma}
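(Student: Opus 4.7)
The argument is a single induction on $n$, with Lemma~\ref{lemma initial data} supplying the base case $n=0$. Suppose $(\Delta_n, T_n, \mu_n, l_n, A_n^E, B_n^E)$ and $\bar\lambda_n, \chi_n, M_n$ have been constructed satisfying all the required level-$n$ conditions. First, choose $N_n \asymp \bar\lambda_n^{\sigma}$ for a small fixed $\sigma \in (0, 2-c)$ and perform a Kakutani--Rokhlin tower construction inside the ergodic probability system $(\Delta_n, T_n, \mu_n)$ to obtain $\Delta_{n+1} \subset \Delta_n$ of positive $\mu_n$-measure whose first $T_n$-return time $r_n$ takes values only in $\{N_n, N_n+1\}$. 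Define $T_{n+1}, \mu_{n+1}, l_{n+1}, A_{n+1}^E, B_{n+1}^E$ by the formulas in $(P2)$--$(P5)$; then $(P1)$--$(P6)$ and \eqref{r n well controlled} are immediate. Finally set $\kappa_n := \bar\lambda_0^{-\gamma}$ (crucially, \emph{constant} in $n$) and define $J_n$ by \eqref{def of J n}. Unwinding the definitions in Notation~\ref{notation 2} and Definition~\ref{nota J}, the negation of the defining angle inequality of $J_n$ is exactly the transversality hypothesis \eqref{lower bounds of angles in the induction} of Lemma~\ref{lemma proceed induction} applied to the $T_n$-orbit $\omega, T_n\omega,\dots, T_n^{r-1}\omega$ for every $\omega \in \Delta_n$ and every $1 \leq r \leq r_n(\omega)$.

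The core of the induction is to invoke Lemma~\ref{lemma proceed induction} at level $n$ for every $E \in I_{E_0} \setminus \bigcup_{m\leq n} J_m$ and every $\omega \in \Delta_{n+1}$. Hypothesis \eqref{lambda l is big} is \eqref{lower bound of norm} at level $n$, valid once $\Gamma$ is so large that $\bar\lambda_n > \Lambda$; \eqref{lower bounds of angles in the induction} is supplied by $E \notin J_n$; and \eqref{r is small} reads $N_n+1 < C_2^{-1}\bar\lambda_0^{-3\gamma}\bar\lambda_n^{2}$, which holds for $\sigma < 2$ and $\bar\lambda_n$ large. The lemma then simultaneously produces (i) $B^E \in SL(2,\R)\setminus SO(2,\R)$, realizing \eqref{B E hyperbolic}; (ii) the angular drift bound $|u(B^E)-u_{r-1}|_{\R/\pi\Z}, |s(B^E)-s_0|_{\R/\pi\Z} \leq C^P \bar\lambda_n^{-2}\kappa_n^{-2}(r-1) \leq \zeta_n$ with the choice $\zeta_n := C^P N_n \bar\lambda_n^{-2}\kappa_n^{-2}$, yielding \eqref{C1 bound of u n}--\eqref{C1 bound of s n}; and (iii) the multiplicative estimate
\[
\lambda(B^E) \geq C^{-Pr}\kappa_n^{r}\exp\!\Bigl(\chi_n\sum_{i=q}^{r-1} l_n(T_n^{i}\omega)\Bigr),
\]
from which I set $\chi_{n+1} := \chi_n - (N_n+1)(|\log\kappa_n|+P\log C)/(N_n \inf l_n)$ and $\bar\lambda_{n+1} := e^{\chi_{n+1}\inf l_{n+1}}$, thereby obtaining \eqref{lower bound of norm 2} at level $n$ and \eqref{lower bound of norm} at level $n+1$.

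Verification of \eqref{item 1}--\eqref{item 4} is now essentially bookkeeping. From $\sup l_{n+1} \leq (N_n+1)\sup l_n$ and $\inf l_{n+1} \geq N_n \inf l_n$ one has $M_{n+1} \leq (1+N_n^{-1})M_n$, which combined with $\sum_n N_n^{-1} < \infty$ (automatic since $N_n \to \infty$) gives \eqref{ratios of time intervals}, \eqref{N n inverse summable}, and \eqref{item 4}. Because $|\log \kappa_n| = \gamma \chi_0 \inf l_0$ is fixed while $\inf l_n \geq \prod_{k<n} N_k \cdot \inf l_0$ grows at least geometrically, the telescoping loss $\sum_n (\chi_n - \chi_{n+1}) = O\bigl(\sum_n 1/\inf l_n\bigr)$ converges, yielding $\chi_n \geq C'\chi_0$ for some universal $C' > 0$, which is \eqref{item 2}. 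This in turn implies $\log\bar\lambda_{n+1} \geq \tfrac{1}{2} N_n \log\bar\lambda_n$, hence $\bar\lambda_n \geq \bar\lambda_0^{2^n}$; together with $\kappa_n^{-2} = \bar\lambda_0^{2\gamma}$ bounded and $\sigma < 2-c$, this gives $\zeta_n = C^P N_n \bar\lambda_n^{-2}\kappa_n^{-2} \leq \bar\lambda_n^{-c} \leq \bar\lambda_0^{-2^n c}$, which is \eqref{item 1}. Finally \eqref{item 3} is immediate: $\kappa_n = \bar\lambda_0^{-\gamma}$ by construction, while $\kappa_n > \bar\lambda_n^{-\gamma'}$ reduces to $\bar\lambda_n^{\gamma'} > \bar\lambda_0^{\gamma}$, which holds since $\bar\lambda_n \geq \bar\lambda_0$ and $\gamma < \gamma'$.

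The main obstacle is the triple balance governing $\kappa_n$: it must be (a) large enough that $\kappa_n > 2\bar\lambda_n^{-1/4}$ for Lemma~\ref{lemma proceed induction} to apply, (b) essentially constant in $n$ so that the multiplicative loss $\kappa_n^r$ accumulating over the tower produces only a summable error in the Lyapunov exponent $\chi_n$, and (c) small, of order $\bar\lambda_0^{-\gamma}$, to be useful later when controlling $Leb(J_n)$. The choice $\kappa_n = \bar\lambda_0^{-\gamma}$ reconciles all three simultaneously, with the margin $\gamma < \gamma' < \tfrac{1}{4}$ providing exactly the slack needed for (a) as $\bar\lambda_n \to \infty$. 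A secondary delicacy is the Kakutani--Rokhlin construction: one must ensure $\mu_n(\Delta_{n+1}) > 0$ and that $r_n$ is exactly bi-valued; this is standard, given that the system is ergodic and aperiodic with $\inf l_n \geq 1$.
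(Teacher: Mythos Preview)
Your approach is essentially correct and proves the lemma, but it differs from the paper's argument in two linked parameter choices. The paper fixes a \emph{slowly growing} sequence $\{N_n\}$ once and for all (independent of $\lambda$, with $N_{n+1}\le 2N_n$ and $\sum \log\frac{N_n+1}{N_n}<\infty$), and compensates by letting $\kappa_n=\bar\lambda_n^{-\eta_n}$ with a summable sequence $\eta_n\in(\gamma/2^n,\gamma')$; this makes the tower $\{\Delta_n\}$ independent of the potential and pushes the $\lambda$-dependence entirely into the exponents. You instead take $N_n\asymp\bar\lambda_n^{\sigma}$ growing super-exponentially and keep $\kappa_n$ constant. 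Both schemes balance the same inequalities; yours is arguably more transparent because the multiplicative loss $\kappa_n^{r}$ per level is a fixed fraction of $\chi_0$, while the paper's scheme has the advantage that the combinatorial tower does not depend on $v$.

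A few points need tightening. First, setting $\kappa_n=\bar\lambda_0^{-\gamma}$ gives equality, not the strict inequality required in \eqref{item 3}; take any $\gamma''\in(\gamma,\gamma')$ and set $\kappa_n=\bar\lambda_0^{-\gamma''}$ instead. Second, you write $|\log\kappa_n|=\gamma\chi_0\inf l_0$, but it is just $\gamma\chi_0$. More importantly, your sentence ``$\sum_n(\chi_n-\chi_{n+1})=O(\sum_n 1/\inf l_n)$ converges, yielding $\chi_n\ge C'\chi_0$'' hides that the implied constant is $\approx\gamma\chi_0$, so the total loss is a \emph{fraction} of $\chi_0$, not an absolute constant; the argument only works because $\gamma\sum_n 1/\inf l_n<1$ (indeed $\inf l_0\ge 2$ and $N_n$ large force $\sum_n 1/\inf l_n\le 1$, and $\gamma<1/4$), which you should state explicitly. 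Finally, to obtain \eqref{B E hyperbolic}--\eqref{lower bound of norm 2} for all $0\le q<r\le r_n(\omega)$ you must apply Lemma~\ref{lemma proceed induction} with base point $T_n^{q}\omega\in\Delta_n$, not just $\omega\in\Delta_{n+1}$; this is routine but should be said.
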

\begin{proof}
Denote \begin{eqnarray} \label{tmp 0} \xi = -\frac{1}{10}\log \gamma' \end{eqnarray}
By the condition $\gamma' < 1$, we have $\xi > 0$.

We choose an arbitrary sequence of integers $\{N_n\}$, such that 
\begin{eqnarray}
\label{tmp 1} \sum_{n=0}^{\infty} \log\frac{N_n+1}{N_n} &<& \xi  \\
\label{tmp 2}  \frac{2}{1 - e^{-\xi}} &\leq& N_n, \forall n \geq 0 \\
\label{tmp 100} N_{n+1} &\leq& 2N_{n}, \forall n \geq 0
\end{eqnarray}
By \eqref{tmp 1}, we get \eqref{N n inverse summable}.

Assume that $\Delta_m$ is defined for all $0 \leq m \leq n$ for some $n \geq 0$ ( $\Delta_0$ is defined in Section \ref{a tower construction} ). By Rokhlin tower theorem and aperiodicity, we can and do choose
\begin{eqnarray*}
\Delta_{n+1} \subset \Delta_{n}
\end{eqnarray*}
 such that for any $  \omega \in \Delta_{n+1}$, we have 
\begin{eqnarray*}
r_{n}(\omega) \in \{N_n, N_n+1\}
\end{eqnarray*}
We inductively define $\Delta_n$ for all $n \geq 0$ and we get \eqref{r n well controlled} for all $n \geq 0$.

We define that
\begin{eqnarray}
\label{tmp 3} M_{n+1} = \frac{N_n+1}{N_n}M_n, \forall n\geq 0
\end{eqnarray}
where $M_0$ is defined in Lemma \ref{lemma initial data} as $M_0 = \frac{\sup l_0}{\inf l_0}$. Since we already showed \eqref{r n well controlled}, by \eqref{used once no} and \eqref{r n well controlled} we have for any $n\geq 0$, for any $\omega \in \Delta_{n+1}$,
\begin{eqnarray*}
l_{n+1}(\omega) \leq r_n(\omega) \sup l_n \leq (N_{n}+1)\sup l_n
\end{eqnarray*}
and
\begin{eqnarray*}
l_{n+1}(\omega) \geq r_n(\omega) \inf l_n \geq N_n \inf l_n
\end{eqnarray*}
If we have $\sup l_n \leq M_n \inf l_n$, then we have
\begin{eqnarray*}
\sup l_{n+1} \leq \frac{N_n+1}{N_n}M_n\inf l_{n+1} = M_{n+1} \inf l_{n+1}
\end{eqnarray*}
This gives \eqref{ratios of time intervals} for all $n \geq 0$.

By \eqref{tmp 1} and \eqref{tmp 3}, we obtain \eqref{item 4} with $C" = e^{\xi}$.

We choose an arbitrary sequence $\{\eta_n\}_{n\in \N}$ that satisfy
\begin{eqnarray}
\label{tmp 20}\sum_{n=0}^{\infty} \eta_n  &<& \infty \\
\label{tmp 4}\mbox{ and  } \frac{\gamma}{2^{n}} < \eta_n \leq \eta_0 &<& \gamma', \forall n \in \N 
\end{eqnarray}

Let $P$ be given by Lemma \ref{lemma proceed induction}. We define for all $n \geq 0$
\begin{eqnarray}
\label{tmp 6}\kappa_n &=& \bar{\lambda}_n^{-\eta_n} \\
\label{tmp 5} \chi_{n+1} &=& \inf_{\omega \in \Delta_{n}} \inf_{1\leq r \leq r_n(\omega)}( \chi_{n} + \frac{r( \log\kappa_n-P \log C)}{\sum_{i=0}^{r-1} l_n(T_n^{i}(\omega))}) \\
\label{tmp 30}\bar{\lambda}_{n+1} &=& e^{\chi_{n+1} \inf l_{n+1}} 
\end{eqnarray}

Now we are going to verify \eqref{item 2} and the second inequality in \eqref{item 3}.
We first show the following lemma.
\begin{lemma}
There exists $C' > 0$ such that we have for all sufficiently large $\bar{\lambda}_0 > 0$ the following
\begin{eqnarray}
\label{tmp 800}\chi_{n+1} &> &C'\chi_0 \\
\label{tmp 8} \bar{\lambda}_n &\geq& \bar{\lambda}_0^{2^{n}}
\end{eqnarray}
for all $n \geq 0$. 

As a consequence, for all sufficiently large $\bar{\lambda}_0$ we have \eqref{item 2} and the second inequality in \eqref{item 3} for all $n \geq 0$.
\end{lemma}

\begin{proof}
By \eqref{tmp 30} and Lemma \ref{lemma initial data} we have
\begin{eqnarray}
\label{tmp 7}\log \bar{\lambda}_n \leq  \chi_n \inf l_n
\end{eqnarray}
for all $n \geq 0$.

Hence by \eqref{tmp 6}, \eqref{tmp 5} and \eqref{tmp 7}, we have for all $n \geq 0$,
\begin{eqnarray}
\label{chichichi}\chi_{n+1} &\geq& \chi_{n} + \frac{1}{\inf l_n}(-\eta_n \log\bar{\lambda}_n-P \log C ) \\
&\geq& \chi_n + \frac{1}{\inf l_n}( -\eta_n \inf l_n \chi_n- P \log C) \nonumber \\
&\geq& \chi_{n}(1-\eta_n) - P \log C \nonumber
\end{eqnarray}

Since by \eqref{tmp 0},\eqref{tmp 20} and \eqref{tmp 4}, we have
\begin{eqnarray*}
\eta_n &<&  \gamma' < 1, \forall n\geq 0 \\
\mbox{ and } \sum_{n=0}^{\infty} \eta_n &<& \infty
\end{eqnarray*}
Then there exists $C' > 0$, $C" > 0$ depending only on $\xi, P, C, M_0$, such that if $\chi_0 > C"$, then we have
\begin{eqnarray}
\chi_{n+1} > C'\chi_0, \forall n \geq 0
\end{eqnarray}
This proves \eqref{tmp 800} and \eqref{item 2}.

To simplify the notations, by \eqref{chichichi}, we note that we can choose $C"$ to be large, still depending only on $\xi, P, C, M_0$ such that : if $\chi_0 > C"$, then
\begin{eqnarray*} \chi_{n+1} \geq \chi_{n}(1 - e^{\xi} \gamma')  \end{eqnarray*}
for all $n \geq 0$.

Then it is clear by \eqref{tmp 30} that for all $n \geq 0$
\begin{eqnarray*}
\bar{\lambda}_{n+1} &=& e^{\chi_{n+1} \inf l_{n+1}}  
\geq e^{ \chi_n ( 1 - e^{\xi} \gamma') N_n \inf l_n} 
\geq \bar{\lambda}_n^{2}
\end{eqnarray*}
The last inequality follows from $N_n(1 - e^{\xi}\gamma') \geq 2$ by \eqref{tmp 0} and \eqref{tmp 2}.
This shows that we have 
\begin{eqnarray}
 \bar{\lambda}_n \geq \bar{\lambda}_0^{2^{n}}
\end{eqnarray}
This proves \eqref{tmp 8}.

By \eqref{tmp 4}, \eqref{tmp 6}, \eqref{tmp 8} we have
\begin{eqnarray*}
\kappa_n = \bar{\lambda}_n^{-\eta_n} 
\leq \bar{\lambda}_0^{-2^{n}\eta_n} 
< \bar{\lambda}_0^{-\gamma}
\end{eqnarray*}
This proves the second inequality in \eqref{item 3}.

\end{proof}

Now we will define $J_n$ inductively and verify \eqref{B E hyperbolic} to \eqref{lower bound of norm} along the way.

For $n=0$, we obtain \eqref{B E hyperbolic} and \eqref{lower bound of norm} by Lemma \ref{lemma initial data} when $\lambda$ is sufficiently large.

Assume that for $n \geq 0$, we have defined $J_0, \cdots, J_{n-1}$ and we have \eqref{B E hyperbolic} and \eqref{lower bound of norm} for $0$ to $n$. We define $J_n$ by \eqref{def of J n}. 

By \eqref{lower bound of norm} for $n$ and \eqref{tmp 4}, \eqref{tmp 6}, for any $E \in I_{E_0} \setminus \bigcup_{n \geq m \geq 0}J_m$, for any $\tilde{\omega} \in \Delta_n$, we have
\begin{eqnarray}
\label{tmp 600} \kappa_n &>& \bar{\lambda}_n^{-\gamma'} \\
\label{tmp 700}\lambda(B^{E}_n(\tilde{\omega})) &\geq& \bar{\lambda}_n
\end{eqnarray}
In particular, we see that the first inequality in \eqref{item 3} for $n$ is valid.

We define that
\begin{eqnarray}
\label{tmp 9}\zeta_{n} &=&  C^{P}\bar{\lambda}_n^{-2+2\eta_n}N_n  
\end{eqnarray}

By $c < 2-3\gamma'$ ,\eqref{tmp 6} and \eqref{tmp 8}, for $\lambda$ larger than some constant depending only on $C$, we have that
\begin{eqnarray}
\label{tmp 400}\kappa_n^{3}\bar{\lambda}_n^{2} = \bar{\lambda}_n^{2-3\eta_n}
\geq \bar{\lambda}_n^{2-3\gamma'} 
\geq \bar{\lambda}_0^{2^{n}c}
\end{eqnarray}
By \eqref{tmp 100}, we see that
\begin{eqnarray}
\label{tmp 300}N_n \leq 2^n N_0
\end{eqnarray}
Let $C_2 > 0$ be given by Lemma \ref{lemma proceed induction}.
Hence by \eqref{tmp 400} and \eqref{tmp 300}, when $\bar{\lambda}_0$ is sufficiently large, we have
\begin{eqnarray}
\label{tmp 500}\sup_{\omega \in \Delta_{n+1}} r_n(\omega) \leq N_n+1 \leq C_2^{-1} \kappa_n^{3}\bar{\lambda}_n^{2}
\end{eqnarray}

Combining \eqref{jn1}, \eqref{jn2}, \eqref{tmp 600}, \eqref{tmp 700}, \eqref{tmp 8} and \eqref{tmp 500}, we see that the condition of Lemma \ref{lemma proceed induction} is satisfied for any $\omega \in \Delta_{n}$, any $1\leq r\leq r_n(\omega)$ and any $E \in I_{E_0}\setminus \bigcup_{0 \leq m \leq n} J_m$ when $\bar{\lambda}_0$ is sufficiently large depending only on $C$.

Apply Lemma \ref{lemma proceed induction}, 
we get \eqref{C1 bound of u n}, \eqref{C1 bound of s n}, \eqref{lower bound of norm 2} for $n$ and \eqref{B E hyperbolic}, \eqref{lower bound of norm} for $n+1$ using \eqref{tmp 5} and \eqref{tmp 30}.
By induction, we see that \eqref{B E hyperbolic} to \eqref{lower bound of norm} and the first inequality in \eqref{item 3} are valid for all $n \geq 0$.

Finally by \eqref{tmp 9}, \eqref{tmp 300} we have that
\begin{eqnarray*}
\zeta_{n} &\leq& C^{P}\bar{\lambda}_n^{2\gamma'-2}2^{n}N_0 
\end{eqnarray*}
By $0< c < 2-3\gamma'$ and \eqref{tmp 8}, when $\bar{\lambda}_0$ is sufficiently large depending only on $C$, we have
\begin{eqnarray*}
\zeta_n < \bar{\lambda}_n^{-c} \leq \bar{\lambda}_0^{-2^{n}c} 
\end{eqnarray*}
This proves \eqref{item 1}.

By Lemma \ref{lemma initial data}, we see that $\bar{\lambda}_0$ tends to infinity as $\lambda$ tends to infinity. This concludes the proof.
\end{proof}


\section{Cover the spectrum} \label{cover the spectrum}

The goal of this section is to prove the following lemma, which shows that under suitable conditions the spectrum is covered by $\bar{J}$, where $J$ is introduce in Notation \ref{nota J}.
\begin{lemma}\label{lemma spectrum}
For any $0 < \gamma < \gamma' < \frac{1}{4}$, any $\gamma' < c < 2-3\gamma'$, for all sufficiently large $\lambda$, we define $J_n$ and parameters $ \bar{\lambda}_n, \zeta_n, \chi_n, M_n, N_n, \kappa_n$ that satisfy the conclusions of Lemma \ref{lemma choosing the parameters} with $\gamma, \gamma', c$. Then we have $\Sigma \bigcap I_{E_0}  \subset \overline{J}$. Here $J$ is defined by \eqref{def of J} in Section \ref{choosing the parameters}.\end{lemma}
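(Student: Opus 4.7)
The plan is to combine the hyperbolicity developed in Sections \ref{a tower construction}--\ref{choosing the parameters} with Berezansky's theorem to show that every $E\in I_{E_0}\setminus\overline J$ lies outside the almost-sure spectrum. Specifically, I will exhibit a full-$\mu$-measure set $\Omega'\subset\Omega$ such that for every $\omega\in\Omega'$ and every $E\in I_{E_0}\setminus\overline J$, the operator $H_\omega$ admits no polynomially bounded generalized eigenfunction at energy $E$. Theorem \ref{Berezansky} then forces the spectral measure of $H_\omega$ to vanish on the open set $I_{E_0}\setminus\overline J$; openness implies disjointness from $\sigma(H_\omega)$, and almost-sure constancy $\sigma(H_\omega)=\Sigma_v$ yields $\Sigma_v\cap I_{E_0}\subset\overline J$.

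Fix $E\in I_{E_0}\setminus\overline J$. Since $\overline J$ is closed, $E\notin J_n$ for every $n\ge 0$, so Lemma \ref{lemma choosing the parameters} applies at $E$ in full strength: for each $n$ and each $\omega'\in\Delta_n$ the matrix $B^E_n(\omega')$ is hyperbolic with $\lambda(B^E_n(\omega'))\ge\bar\lambda_n$, and the angular compatibility \eqref{C1 bound of u n}--\eqref{C1 bound of s n} holds with $\zeta_n\to 0$ and $\sum_n\zeta_n<\infty$ by \eqref{item 1}. Set
\[
\Omega'=\bigcap_{n\ge 0}\bigl\{\omega\in\Omega : T^k\omega\in\Delta_n \text{ for some } k>0 \text{ and some } k<0\bigr\}.
\]
Ergodicity together with $\mu(\Delta_n)>0$ gives $\mu(\Omega')=1$. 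For $\omega\in\Omega'$, let $p_n^+=\min\{k>0:T^k\omega\in\Delta_n\}$, $p_n^-=\min\{k>0:T^{-k}\omega\in\Delta_n\}$, and $\omega_n^\pm=T^{\pm p_n^\pm}\omega\in\Delta_n$.

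Suppose toward contradiction that some $\omega\in\Omega'$ and some $X\in\R^2\setminus\{0\}$ satisfy $\|A^E(k,\omega)X\|\le C_1(|k|+1)^d$ for all $k\in\Z$. Transport $X$ to $X_n^+=A^E(p_n^+,\omega)X$, a nonzero vector at $\omega_n^+$. Applied at every deeper scale $m\ge n$ along the orbit of $\omega_n^+$, the estimate \eqref{lower bound of norm 2} combined with Lemma \ref{lemma proceed induction} produces cocycle norms of at least $e^{\chi_m l_m}$ at time-scale $l_m$. Since $\chi_m\ge C'\chi_0=C'\log(\lambda/2)$ is large and $\bar\lambda_m\ge\bar\lambda_0^{2^m}$ by Lemma \ref{lemma choosing the parameters}, while the polynomial growth of $X$ forces only $O((\text{time})^d)$, the vector $X_n^+$ must lie within angle $\bar\lambda_m^{-1/2}$ of the most-contracting direction of the scale-$m$ accelerated cocycle. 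Using \eqref{C1 bound of s n} to compare these directions across scales $m\ge n$ shows they converge to a limiting line $e^+(\omega_n^+)$, so $X$ is forced to lie on $e^+(\omega):=A^E(p_n^+,\omega)^{-1}e^+(\omega_n^+)$. A symmetric argument based on $p_n^-$ and the backward orbit yields a line $e^-(\omega)$ containing $X$. Unraveling (P5) and Notation \ref{conventions}, the angle between $e^+(\omega)$ and $e^-(\omega)$, read off at the $\alpha_0$-block that bridges $\omega_n^-$ and $\omega_n^+$, reduces up to an error $O\bigl(\sum_{m\ge n}\zeta_m\bigr)$ to
\[
\angle\!\bigl(R_{\pi/2-s(A^E(\beta))}\,C^E_j\, R_{u(A^E(\alpha))}\,\begin{bmatrix}1\\0\end{bmatrix},\,\begin{bmatrix}0\\1\end{bmatrix}\bigr)
\]
for certain $\alpha,\beta\in\mathcal A_n$ and $1\le j\le K$ determined by this bridge. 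By $E\notin J_n$, this quantity exceeds $\kappa_n$, while the error is $o(1)$ in $n$ thanks to \eqref{item 1}; taking $n$ large thus yields $e^+(\omega)\ne e^-(\omega)$, contradicting $X\in e^+(\omega)\cap e^-(\omega)\setminus\{0\}$.

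The main obstacle I anticipate is the last identification: rigorously matching the angle between the dynamically defined limiting lines $e^+(\omega)$ and $e^-(\omega)$ with the explicit algebraic expression in the definition of $J_n$. This will require carefully composing the stable/unstable directions of successive $B^E_m$-blocks across the intervening $\alpha_0$-prefixes $C^E_{j}$, verifying that the concatenated words $\alpha,\beta$ belong to $\mathcal A_n$, and summing the telescoping errors from \eqref{C1 bound of u n}--\eqref{C1 bound of s n} so that they remain strictly below $\kappa_n$ for large $n$.
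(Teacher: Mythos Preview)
Your strategy—construct limiting lines $e^{\pm}(\omega)$ containing any polynomially bounded solution and then show they are transverse via the $J_n$ condition—is plausible in spirit but has a genuine gap, and it diverges from the paper's route in an essential way.

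The gap is in the step ``$X_n^+$ must lie within angle $\bar\lambda_m^{-1/2}$ of the most-contracting direction of the scale-$m$ accelerated cocycle''. To deduce this from the polynomial bound $\|A_m^E(\omega_m^+)X_m^+\|\le C_1(\text{time})^d$ you also need a \emph{lower} bound on $\|X_m^+\|$. The only one available a priori is $\|X_m^+\|\ge \|A^E(p_m^+,\omega)\|^{-1}\|X\|\ge e^{-\hat\chi p_m^+}\|X\|$, and since $p_m^+$ can be as large as $\sup l_m\le M_m\inf l_m$ while the forward block also has length at most $\sup l_m$, the competition in the exponent is $\chi_m l_m - \hat\chi p_m^+$, which in the worst case is $(\chi_m - M_m\hat\chi)\inf l_m\le 0$ (recall $\chi_m\le\hat\chi$ from \eqref{hat chi chi n} and $M_m\ge 1$). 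So the hyperbolicity of a single scale-$m$ block does not dominate the possible shrinkage of $X$ during the transport from $\omega$ to $\omega_m^+$, and the angular conclusion fails. The same problem recurs if you try to compare the pulled-back contracting directions across scales to build $e^+(\omega)$.

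The paper repairs exactly this by working not on your full-measure $\Omega'$ but on a smaller positive-measure set $\Omega'=\limsup_n\Omega_n$, where $\Omega_n$ records the event that from the first $\Delta_n$-visits $\omega^1,\omega^2$ (forward/backward from $\omega$) one must wait more than $2N_n^{1/2}$ steps of $T_n$ before hitting $\Delta_{n+1}$. This makes the forward product $G_1^E$ from $\omega^1$ to $\omega^3\in\Delta_{n+1}$ (still lying inside one return cycle, so \eqref{C1 bound of u n}--\eqref{lower bound of norm 2} apply) have length $\gtrsim N_n^{1/2}\inf l_n$, which now overwhelms the initial transport $t_1\le\sup l_n=M_n\inf l_n$ because $N_n^{1/2}\gg M_n$. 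The paper then runs a direct dichotomy on the angle of $g=A^E(t_1,\omega)h$ at $\omega^1$: if $g$ is close to $u^2=u(G_2^E)$, the transversality from $E\notin J_n$ (transferred via \eqref{C1 bound of u n}, \eqref{C1 bound of s n} and Lemma~\ref{C0 close then angle close}) forces $G_1^E C(E)g$ to grow exponentially forward; otherwise $(G_2^E)^{-1}g$ grows exponentially backward. Either way one contradicts \eqref{poly growth}. No limiting directions $e^\pm$ are ever constructed, and the ``main obstacle'' you flag is bypassed entirely.

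So the missing idea in your proposal is the landing-time restriction $s_1,s_2>2N_n^{1/2}$ (and the measure estimate Lemma~\ref{lemma area estimate} showing such $\omega$ are abundant) together with the single-scale dichotomy it enables; without it your alignment argument for $X_m^+$ does not go through.
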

\begin{proof}
By Lemma \ref{lemma choosing the parameters}, for all sufficiently large $\lambda$, we have \eqref{ratios of time intervals}  to \eqref{lower bound of norm} and \eqref{item 1} to \eqref{item 4} for all $n \geq 0$.

Let $\hat{\chi} = \log \sup_{E \in I_{E_0}, \alpha \in \mathcal{A}} \norm{A^E_{\alpha}} $.
By \eqref{item 2}, we see that there exists $C' > 0$ such that $\chi_n > C' \chi_0 > 0$ for all $n\geq 0$. Hence we can take $c_2 > 0$ be a constant so that 
\begin{eqnarray}
\label{tmp 2 4}\hat{\chi} < c_2 \chi_n, \forall n \geq 0
\end{eqnarray}
By the choice of $\chi_{n}$ in \eqref{lower bound of norm 2}, we also see that
\begin{eqnarray}
\label{hat chi chi n}\chi_{n} \leq \hat{\chi}, \forall n \geq 0
\end{eqnarray}


By ergodicity, for $\mu - a.e. \omega \in \Omega$ , we can and do define
\begin{eqnarray*}
t_1(n,  \omega) &=& \inf\{ k \geq 0;   T^{k}(\omega)\in \Delta_n \}   \\ 
t_2(n,  \omega) &=& \inf\{ k > 0:  T^{-k}(\omega) \in \Delta_n \}  
\end{eqnarray*}
such that $t_1(n, \omega),t_2(n, \omega) < \infty$.
Then for such $\omega \in \Omega$, we define
\begin{eqnarray*}
W_1(n, \omega) &=& T^{t_1(n,\omega)}(\omega)    \\
W_2(n, \omega) &=& T^{-t_2(n,\omega)}(\omega)
\end{eqnarray*}

It is direct to see that
\begin{eqnarray}
\label{W1W2}W_1(n,\omega) &=& T_n(W_2(n,\omega))   \mbox{  for $\mu-a.e. \omega \in \Omega$} \\
\label{tmp 2 2} t_1(n, \omega) &\leq& \sup l_n
\end{eqnarray}

Since $(\Omega, T, \mu)$ is ergodic, it is a standard fact that $(\Delta_n, T_n,\mu_n)$ is also ergodic.
Hence for $\mu_n -a.e. \omega \in \Delta_n$, we can and do define
\begin{eqnarray*}
s_1(n,\omega) &=& \inf\{ k \geq 0;   T_n^{k}(\omega)\in \Delta_{n+1} \}  \\
s_2(n,\omega) &=& \inf\{ k \geq 0;   T_n^{-k}(\omega)\in \Delta_{n+1} \} 
\end{eqnarray*}
such that $s_1(n, \omega), s_2(n, \omega) < \infty$. Then for such $\omega \in \Omega$, we define
\begin{eqnarray*}
W_3(n, \omega) &=& T_n^{s_1(n,\omega)}(\omega)    \\
W_4(n, \omega) &=& T_n^{-s_2(n,\omega)}(\omega)
\end{eqnarray*}

We define
\begin{eqnarray*}
\Omega_n = \{ \omega \in \Omega ; s_2(n, \omega_2(n, \omega) ), s_1(n, \omega_1(n, \omega)) > 2N_n^{\frac{1}{2}}   \}
\end{eqnarray*}
By the definition of $s_1(\cdot, \cdot),s_2(\cdot,\cdot)$, for any $\omega \in \Delta_n$ such that $s_1(n, \omega) > 0$ or $s_2(n, \omega) > 0$, we have $\omega \in \Delta_n \setminus \Delta_{n+1}$. Hence for any $\omega \in \Omega_n$, we have that $\omega_1(n, \omega), \omega_2(n, \omega) \in \Delta_n \setminus \Delta_{n+1} $.

\begin{lemma} \label{lemma area estimate}
There exists a constant $c_4 > 0$ such that
$ \mu(\Omega_n) > c_4 $ for all sufficiently large $n$. 
\end{lemma}
\begin{proof}
By \eqref{r n well controlled}, for all $m \geq 0$, $\Delta_{m+1}, T_m(\Delta_{m+1}), \cdots, T_m(^{N_m-1}\Delta_{m+1})$ are mutually disjoint and belong to $\Delta_m$. Moreover it is easy to see that their union takes up a proportion of $\Delta_{m}$ no less than $\frac{N_n}{N_n+1}$.
We see from (P2) that $T_0^{k_0} T_1^{k_1} \cdots T_{n-1}^{k_{n-1}}T_{n}^{k_n}\Delta_{n+1}$ for $0 \leq k_0 \leq N_0-1, 0 \leq k_1 \leq N_1-1, \cdots, 0 \leq k_{n-1} \leq N_{n-1} , 2N_{n}^{\frac{1}{2}} \leq k_n \leq N_n - 2N_n^{\frac{1}{2}}$ all belong to $\Omega_n \bigcap \Delta_0$, and are mutually disjoint for points in different sets have different landing time with respect to sequence $\Delta_1, \cdots, \Delta_{n+1}$. We know that $\mu(\Omega_n) \geq \mu(\Delta_0) \mu_0(\Omega_n \bigcap \Delta_0)> \mu(\Delta_0) \frac{N_n-4N_n^{\frac{1}{2}}}{N_n+1} \frac{N_{n-1}}{N_{n-1}+1} \cdots \frac{N_0}{N_0+1}$. This proves the lemma since we have chosen $N_n$ so that $\prod_{n=0}^{\infty} \frac{N_n}{N_n+1} > 0$.
\end{proof}

Define
\begin{eqnarray*}
\Omega' = \bigcap_{n = 1}^{ \infty} \bigcup_{m =n}^{\infty} \Omega_m
\end{eqnarray*}
By Lemma \ref{lemma area estimate}, we have
\begin{eqnarray*}
\mu(\Omega') > 0
\end{eqnarray*}

In order to prove Lemma \ref{lemma spectrum}, it suffices to prove that :

For all $\omega \in \Omega'$, we have
\begin{eqnarray*}
\Sigma_{\omega} \bigcap I_{E_0} \subset \overline{J}
\end{eqnarray*}

Assume the contrary, then there exists $\omega \in \Omega'$ such that $$\nu_{\omega}(I_{E_0} \setminus \overline{J}) > 0 $$
Here $\nu_{\omega}$ is the spectral measure of the Schr\"odinger operator associated to $\omega$.

Then by Theorem \ref{Berezansky}, there exists $E \in I_{E_0} \setminus \overline{J}$ such that the Schr\"odinger operator with potential $\{ v(T^n(\omega)_0) \}_{n \in \Z}$ admitting a generalized eigenfunction with polynomial growth ( in fact, of degree 1 since we are considering a one-dimensional operator, but this point is not essentially used as we can see from the proof ).
Thus there exists $h \in \R^2, c_3 > 0$, such that
\begin{eqnarray}
\label{poly growth}  \norm{A^{E}(m, \omega) h} \leq c_3 (1 + |m|) \norm{h},   \forall m \in \Z
\end{eqnarray}

By the definition of $\Omega'$, there exists arbitrarily large $n$, such that $\omega \in \Omega_{n}$.
Denote 
\begin{eqnarray*}
\omega^{1} &=& W_1(n, \omega), \hspace{1 cm}\omega^2 = W_2(n, \omega) \\
\omega^{3} &=& W_3(n, \omega^1),\hspace{1 cm}\omega^4 = W_4(n, \omega^2)
\end{eqnarray*}
and
\begin{eqnarray*}
t_1 &=& t_1(n, \omega), \hspace{0.3cm} s_1 = s_1(n, \omega^1), \hspace{0.3cm} s_2 = s_2(n, \omega^2)
\end{eqnarray*}
We verify by definition that
\begin{eqnarray}
\omega^3 &=& T_n^{s_1}(\omega^1) 
\end{eqnarray}
and
\begin{eqnarray*}
T_n(\omega^2) = \omega^1
\end{eqnarray*}
We also denote
\begin{eqnarray*}
 g &=& A^{E}(t_1, \omega)h
\end{eqnarray*}
Denote the argument of $g$ by $\theta(g)$. More precisely we have
\begin{eqnarray*}
g  =\norm{g} \begin{bmatrix}\cos\theta(g) \\ \sin\theta(g)\end{bmatrix}
\end{eqnarray*}
By \eqref{tmp 2 2} we have estimate
\begin{eqnarray}
\norm{g} &\geq& \norm{A^E(t_1, \omega)}^{-1}\norm{h} \\
&\geq&  e^{-t_1 \hat{\chi} }\norm{h} \nonumber \\
&\geq& e^{-\sup l_n \hat{\chi} } \norm{h} \nonumber
\end{eqnarray}

Since $\omega \in \Omega_n$, we have $s_1,s_2 > 0$. By definition
\begin{eqnarray*}
\omega^2 = T_n^{s_2}(\omega^4)
\end{eqnarray*}

There exist $1 \leq i_1,i_2 \leq K$, such that $$\omega^1 \in \Delta_{(i_1)}, \hspace{1 cm} \omega^4 \in \Delta_{(i_2)}$$
We denote that
\begin{eqnarray}
\label{l1}L_1 &=& \sum_{i=0}^{s_1-1} l_n(T_n^{i}(\omega^1)) - i_1 \\
\label{l2}L_2 &=& \sum_{i=0}^{s_2} l_n(T_n^{i}(\omega^4)) - i_2
\end{eqnarray}
By (P5), we have
\begin{eqnarray*}
A^{E}(L_1, T^{i_1}(\omega^1)) &=& A^{E}_n(T_n^{s_1-1}(\omega^1)) \cdots A^{E}_n(T_n(\omega^1))B^{E}_n(\omega^1) \\
A^{E}(-L_2, \omega^1)^{-1} &=& A^{E}_n(T_n^{s_2}(\omega^4)) \cdots A^{E}_n(T_n(\omega^4)) B^{E}_n(\omega^4) 
\end{eqnarray*}
Denote
\begin{eqnarray*}
G^{E}_1 &=& A^{E}(L_1, T^{i_1}(\omega^1))   \\
G^{E}_2 &=& A^{E}(-L_2, \omega^1)^{-1} \\
C(E) &=& C^{E}_n(\omega^1) = A^{E}(i_1, \omega^1)
\end{eqnarray*}

By  \eqref{B E hyperbolic}  we know that $G^{E}_1, G^{E}_2 \in SL(2,\R) \setminus SO(2,\R)$, and by \eqref{lower bound of norm 2} and \eqref{l1},\eqref{l2} we have
\begin{eqnarray}
\label{ge1} \norm{G^{E}_1} &\geq& e^{(L_1+i_1)\chi_{n+1}}  \\
\label{ge2}\norm{G^{E}_2} &\geq& e^{(L_2+i_2)\chi_{n+1}} 
\end{eqnarray}
Denote
\begin{eqnarray*}
u^1 = u(G^{E}_1), \hspace{1 cm}s^1 = s(G^{E}_1) \\
u^2 = u(G^{E}_2), \hspace{1 cm}s^2 = s(G^{E}_2)
\end{eqnarray*}
Then by \eqref{C1 bound of u n},\eqref{C1 bound of s n} and \eqref{item 1}, we have
\begin{eqnarray*}
|s^1 - s(B^{E}_n(\omega^1))|_{\R/\pi\Z}, |u^2 - u(B^{E}_n(\omega^2))|_{\R/\pi\Z} \leq \zeta_n <\bar{\lambda}_n^{-c}
\end{eqnarray*}

Since $E \notin J$, by \eqref{jn2} we have either
\begin{eqnarray}
\label{first alternate 1}B^{E}_n(\omega^1), B^{E}_n(\omega^2) \in SL(2,\R)\setminus SO(2,\R) \\
\mbox{ and }
\label{first alternate}\angle( R_{\frac{\pi}{2} - s(B^{E}_n(\omega^1))}  C(E) R_{u(B^{E}_n(\omega^2))} \begin{bmatrix} 1 \\ 0 \end{bmatrix}, \begin{bmatrix} 0 \\ 1 \end{bmatrix}  ) \geq \kappa_n
\end{eqnarray}
or
\begin{eqnarray*}
B^{E}_n(\omega^1) \in SO(2,\R) \mbox{ or } B^{E}_n(\omega^2) \in SO(2,\R)
\end{eqnarray*}
The second alternate contradicts \eqref{B E hyperbolic}. Indeed, we can apply \eqref{B E hyperbolic} to $E$, $\omega_4$, $q = s_2-1$ and $r= s_2$; then again apply to $q=s_2$ and $r = s_2+1$.
Thus we have \eqref{first alternate 1} and \eqref{first alternate}.

By \eqref{item 3}, we have $\kappa_n \geq \bar{\lambda}_n^{- \gamma'}$.By $c > \gamma'$ and Lemma \ref{C0 close then angle close} applied to $u^2, s^1, u(B^{E}_n(\omega^2)), s(B^{E}_n(\omega^1))$, when  $\bar{\lambda}_0$ is bigger than some absolute constant, we can ensure that
\begin{eqnarray}
\label{trans}
\angle( R_{\frac{\pi}{2} - s^1}  C(E) R_{u^2} \begin{bmatrix} 1 \\ 0 \end{bmatrix}, \begin{bmatrix} 0 \\ 1 \end{bmatrix}  ) > \kappa_n - c_5\bar{\lambda}_n^{-c} \geq \frac{1}{2} \kappa_n
\end{eqnarray}

We distinguish two cases:

(1)If we have $|\theta(g) - u^2|_{\R/\pi\Z} < \frac{1}{10}c_5^{-1}\kappa_n$.

Then by \eqref{trans} and Lemma \ref{C0 close then angle close},
we have 
\begin{eqnarray*}
\angle(R_{\frac{\pi}{2}-s^1}C(E)\begin{bmatrix}\cos\theta(g) \\ \sin\theta(g) \end{bmatrix} , \begin{bmatrix} 0\\ 1 \end{bmatrix} ) = \angle(R_{\frac{\pi}{2}-s^1}C(E)R_{\theta(g)}\begin{bmatrix}1 \\ 0 \end{bmatrix} , \begin{bmatrix} 0\\ 1 \end{bmatrix} ) > \frac{1}{4}\kappa_n
\end{eqnarray*}
 In this case, when $n$ is larger than some constant depending only on $c_2$, we have
\begin{eqnarray}
\norm{A^{E}(L_1+i_1, \omega^1)g} &=& \norm{A^{E}(L_1, T^{i_1}(\omega^1))A^{E}(i_1, \omega^1)g} \nonumber \\
&=& \norm{G^{E}_1 C(E)g}  \nonumber \\
&=& \norm{R_{u^1} \begin{bmatrix}\norm{G^{E}_1} & 0 \\ 0 & \norm{G^{E}_1}^{-1} \end{bmatrix} R_{\frac{\pi}{2}-s^1} C(E)g}  \nonumber \\
&\gtrsim&c_{5}^{-1}C^{-1} e^{(L_1+t_1)\chi_{n+1}} \kappa_n\norm{g}  \nonumber  \mbox{ ( by \eqref{ge1} )}\\
\label{last ineq}&\geq& c_{5}^{-1}C^{-1} e^{(L_1+t_1)\chi_{n+1} - \gamma' \chi_n \inf l_n} \norm{g} \mbox{ ( by \eqref{item 3} )}
\end{eqnarray}
Since $\omega \in \Omega_n$, we have
\begin{eqnarray}
\label{s1}s_1 &\geq& 2N_n^{\frac{1}{2}}
\end{eqnarray}
By \eqref{l1} it is clear that
\begin{eqnarray}
\label{tmp 2 1} s_1\sup l_n \geq L_1 \geq s_1 \inf l_n - i_1
\end{eqnarray}
Then by \eqref{tmp 2 1}, \eqref{s1}, for all large $n$ we have
\begin{eqnarray}
\label{tmp 2 3} L_1 &\geq& \frac{2}{3}s_1 \inf l_n 
\end{eqnarray}
Moreover by \eqref{tmp 2 4} and \eqref{hat chi chi n}, we have 
\begin{eqnarray*}
\chi_{n+1} \geq c_2^{-1}\hat{\chi} \geq c_2^{-1}\chi_{n}
\end{eqnarray*}
Hence by \eqref{last ineq} and \eqref{s1} we have for all sufficiently large $n$ that
\begin{eqnarray*}
\norm{A^{E}(L_1+i_1, \omega^1)g} \gtrsim c_{5}^{-1}C^{-1} e^{\frac{1}{2}L_1 \chi_{n+1}}\norm{g}
\end{eqnarray*}

By \eqref{s1} and \eqref{item 4}, for $n$ sufficiently large we have
\begin{eqnarray}
\label{tmp 2 5}s_1 > 12C"M_0c_2 \geq12M_nc_2
\end{eqnarray}
where $C"$ is given by Lemma \ref{lemma choosing the parameters}.

Thus
\begin{eqnarray*}
\norm{A^{E}(L_1+i_1+t_1, \omega)h} &=& \norm{A^{E}(L_1+i_1, \omega^1)g} \\
&\gtrsim& c_{5}^{-1}C^{-O(1)}e^{\frac{1}{2}L_1\chi_{n+1}-\sup l_n\hat{\chi}}\norm{h} 
\end{eqnarray*}
By \eqref{tmp 2 3},\eqref{tmp 2 4} and \eqref{tmp 2 5}, we have
\begin{eqnarray*}
\frac{1}{2}L_1\chi_{n+1}-\sup l_n\hat{\chi} &\geq& (\frac{1}{3}s_1\inf l_n -c_2 \sup l_n)\chi_{n+1} \\
&\geq& \frac{1}{4}s_1\inf l_n\chi_{n+1} 
\end{eqnarray*}
By $1\leq i_1\leq K$, \eqref{tmp 2 2},\eqref{tmp 2 1}, \eqref{ratios of time intervals}, \eqref{item 2} and \eqref{item 4}
\begin{eqnarray*}
s_1\inf l_n\chi_{n+1} &\geq& \frac{1}{M_n}s_1 \sup l_n \chi_{n+1} \\
&\geq&\frac{1}{O(M_n)}(L_1+i_1+t_1)\chi_{0} \geq \frac{1}{O(M_0)}(L_1+i_1+t_1) \chi_0 \\
\end{eqnarray*}
Thus we have
\begin{eqnarray*}
\norm{A^{E}(L_1+i_1+t_1, \omega)h} \gtrsim c_{5}^{-1}C^{-1}e^{\frac{1}{O(M_0)}(L_1+i_1+t_1)\chi_0}\norm{h}
\end{eqnarray*}
This contradicts \eqref{poly growth} when $n$ is large.

(2)If we have $|\theta(g) - u^2|_{\R/\pi\Z}  \geq \frac{1}{10}c_{5}^{-1}\kappa_n$

 Since
\begin{eqnarray}
u^2 = s(A^{E}(-L_2, \omega^1))
\end{eqnarray}
Similar computations shows that for all sufficiently large $n \geq 0$ we have
\begin{eqnarray*}
\norm{A^{E}(-L_2, \omega^1)g} &=& \norm{(G^{E}_2)^{-1} g} \\
&\gtrsim& c_{5}^{-1}C^{-1}e^{L_2\chi_{n+1}} \kappa_n\norm{g}  \\
&\gtrsim& c_{5}^{-1}C^{-1}e^{\frac{1}{2}L_2 \chi_{n+1}}\norm{g}
\end{eqnarray*}
and we can reach a contradiction in a way similar to (1).
This proves the statement in the lemma.

\end{proof}


\section{Area of the spectrum and the proof of Theorem \ref{main theorem}} \label{Area of the spectrum and the proof of the main theorem}
To prove Theorem \ref{real main theorem}, and as a consequence, Theorem \ref{main theorem},
it remains to estimate the measure of $\overline{J}$, where $J$ is defined in \eqref{def of J} in Section \ref{choosing the parameters}.

\begin{nota}
For any $n\geq 0$, any $\alpha \in \mathcal{A}_n$ such that $\alpha = \omega_i\omega_{i+1}\cdots \omega_{l_n(\omega)-1} $ for some $1\leq i\leq K$ and $\omega \in \Delta_{(i)}$, for each $0 \leq m \leq n-1$, we define
\begin{eqnarray*}
hd_{m}(\alpha) &=& \omega_i\omega_{i+1} \cdots \omega_{l_{m}(\omega)-1}  \\
rr_{m}(\alpha) &=& \tilde{\omega}_j \tilde{\omega}_{j+1} \cdots \tilde{\omega}_{l_{m}(\tilde{\omega})-1} 
\end{eqnarray*}
Here $\tilde{\omega} = T_{m}^{-1}T_n(\omega)$ such that $\tilde{\omega} \in \Delta_{m}\bigcap\Delta_{(j)}$ for some $1\leq j \leq K$. We can verify by (P2) that $hd_{m}(\alpha), rr_{m}(\alpha)$ are respectively prefix and suffix of $\alpha$, and belong to $\mathcal{A}_{m}$.
\end{nota}

The following estimate is essentially proved in \cite{ADZ} ( see also \cite{WZ} ) by explicit calculations. Here we give a sketched proof.
\begin{lemma} \label{lemma C1}
There exists $C_1> 0$ such that for all sufficiently large $\lambda$ the following is true.
For all $\alpha, \beta \in \mathcal{A}_0$, any $1\leq j \leq K$, any $E \in I_{E_0}$ we have $A^{E}_{\alpha}, A^{E}_{\beta} \in SL(2,\R) \setminus SO(2,\R)$. As functions from $I_{E_0}$ to $\R / \pi\Z$, $E \mapsto s(A^{E}_{\beta})$ and $E \mapsto u(A^{E}_{\alpha})$ are $C^1$. Moreover, consider $E \mapsto R_{\frac{\pi}{2}-s(A^{E}_{\beta})}C^E_j R_{u(A^{E}_{\alpha})}\begin{bmatrix} 1 \\ 0 \end{bmatrix}$ as a function from $I_{E_0}$ to $\mathbb{P}\R^2$, we have $$ |\partial_E( R_{\frac{\pi}{2}-s(A^{E}_{\beta})}C^E_j R_{u(A^{E}_{\alpha})}\begin{bmatrix} 1 \\ 0 \end{bmatrix})| > C_1  $$
\end{lemma}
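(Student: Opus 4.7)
The plan is to verify the three assertions in sequence, using strong hyperbolicity of $A^E_\alpha$ and $A^E_\beta$ for large $\lambda$ to make $u(A^E_\alpha)$ and $s(A^E_\beta)$ effectively $E$-independent; item~3 then reduces to an explicit finite computation for the angular derivative of $(C^E)^j\begin{bmatrix}1\\0\end{bmatrix}$.

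For items~1 and~2, every $\alpha\in\mathcal{A}_0$ consists only of letters $\omega_i\neq\alpha_0$, with length $|\alpha|\leq\sup l_0<\infty$ by condition~(2) of Theorem~\ref{main theorem}. For $E\in I_{E_0}$ and $\lambda$ large, $|E-v(\omega_i)|\geq\tfrac{9}{10}\lambda$, so the cone argument from the proof of Lemma~\ref{lemma initial data} applies verbatim and yields $\lambda(A^E_\alpha)\geq(\lambda/2)^{|\alpha|}$. Hence $A^E_\alpha\in SL(2,\R)\setminus SO(2,\R)$, giving item~1. Item~2 then follows because $s,u\colon SL(2,\R)\setminus SO(2,\R)\to\R/\pi\Z$ are real analytic (they pick out eigendirections of $AA^t$ and $A^tA$, with simple eigenvalues away from $SO(2,\R)$), and $E\mapsto A^E_\alpha$ is polynomial.

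For item~3, the key input is the standard perturbation bound
\[
  |\partial u(A)\cdot H|+|\partial s(A)\cdot H|\lesssim\norm{H}/\lambda(A)^2
\]
for $A\in SL(2,\R)\setminus SO(2,\R)$. Combined with $\norm{\partial_E A^E_\alpha}\lesssim|\alpha|\lambda^{|\alpha|-1}$ and $\lambda(A^E_\alpha)\gtrsim(\lambda/2)^{|\alpha|}$ this yields $|\partial_E u(A^E_\alpha)|,|\partial_E s(A^E_\beta)|=O(\lambda^{-2})$. Setting $V^E=R_{u(A^E_\alpha)}\begin{bmatrix}1\\0\end{bmatrix}$, the chain rule and \eqref{C1 upper bound of critical matrices} give
\[
  \partial_E\bigl(R_{\frac{\pi}{2}-s(A^E_\beta)}C^E_j V^E\bigr)=R_{\frac{\pi}{2}-s(A^E_\beta)}(\partial_E C^E_j)V^E+O(\lambda^{-2}).
\]
A direct induction on $A^E_\omega\begin{bmatrix}1\\t\end{bmatrix}$ shows $V^E=\begin{bmatrix}1\\0\end{bmatrix}+O(1/\lambda)$; since rotations preserve angular speed, it suffices to lower-bound the angular derivative of $(C^E)^j\begin{bmatrix}1\\0\end{bmatrix}$, uniformly for $E\in I_{E_0}$ and $1\leq j\leq K$. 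Writing $(C^E)^j\begin{bmatrix}1\\0\end{bmatrix}=\begin{bmatrix}p_j(w)\\q_j(w)\end{bmatrix}$ with $w=E-v(\alpha_0)$, the recursion $p_{j+1}=wp_j-q_j$, $q_{j+1}=p_j$ gives the Wronskian identity $W_{j+1}=W_j-p_j^2$ for $W_j=p_jq_j'-p_j'q_j$; since $W_1=-1$, by induction $W_j\leq -1$ for all $j$, and $p_j^2+q_j^2$ is bounded above uniformly on $|w|\leq H$, $j\leq K$, so the angular derivative $W_j/(p_j^2+q_j^2)$ is bounded away from zero, yielding the desired $C_1>0$, as in \cite{ADZ}, \cite{WZ}.

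The main obstacle is the perturbation bound $|\partial_E u(A^E_\alpha)|=O(\lambda^{-2})$: geometrically the unstable direction of a strongly hyperbolic $2\times 2$ matrix is an attracting fixed point on $\mathbb{P}^1\R^2$ with contraction rate $\lambda(A)^{-2}$, proved by second-order analysis at the eigenvectors. Once this is in hand, the final lower bound is the finite Wronskian check above.
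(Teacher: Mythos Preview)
Your argument is correct and takes a somewhat different route from the paper's. Both decompose the derivative via the product rule into three terms. You show the two outer terms (coming from $\partial_E s_\beta$ and $\partial_E u_\alpha$) are $o(1)$ as $\lambda\to\infty$; the paper instead uses the monotonicity facts $\partial_E R_{\frac{\pi}{2}-s_\beta}(\cdot)\le 0$ and $\partial_E R_{u_\alpha}(\cdot)\le 0$ from \cite{ADZ},\cite{WZ} to make all three terms carry the same sign, so no smallness estimate for $\partial_E u_\alpha,\partial_E s_\beta$ is needed. For the central term $\partial_E C^E_j(\phi)$ at $\phi$ near $\begin{bmatrix}1\\0\end{bmatrix}$, the paper telescopes via $\partial_E C^E_j(\phi)\le DC^E_{j-1}(C^E\phi)\,\partial_E C^E(\phi)$ and checks $\partial_E C^E(v)<0$ by hand, while your Wronskian recursion $W_{j+1}=W_j-p_j^2$ with $W_1=-1$ is a clean self-contained substitute. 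The paper's sign-based route is more robust (it works without knowing that the outer terms are small); yours is more quantitative.

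One correction: the general perturbation bound is $|\partial u(A)\cdot H|\lesssim\|H\|/\lambda(A)$, not $\|H\|/\lambda(A)^2$; test $A=\begin{bmatrix}\lambda&0\\0&\lambda^{-1}\end{bmatrix}$ and $H=\begin{bmatrix}0&0\\1&0\end{bmatrix}$. The $\lambda(A)^{-2}$ contraction you cite on $\mathbb{P}^1$ describes the action of $A$ near its attracting direction, not the sensitivity of that direction to perturbations of $A$ itself. With the correct bound, together with the cone estimate $\lambda(A^E_\alpha)\gtrsim\prod_i|E-v(\omega_i)|$ and $\|\partial_E A^E_\alpha\|\lesssim\sum_i\prod_{j\neq i}|E-v(\omega_j)|$, you still obtain $|\partial_E u_\alpha|,|\partial_E s_\beta|=O(\lambda^{-1})$, which is all your argument needs.
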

\begin{proof}
Since for any $\alpha \in \mathcal{A} \setminus \{\alpha_0\}$, any $E \in I_{E_0}$, we have $|tr(A^{E}_{\alpha})| \geq \lambda - H$. When $\lambda > H+2$, we have $A^{E}_{\alpha} \in SL(2,\R) \setminus SO(2,R)$.

Denote 
\begin{eqnarray*}
v = \begin{bmatrix} 1 \\ 0 \end{bmatrix}, s_{\beta}(E) = s(A^{E}_{\beta}), u_{\alpha}(E) = u(A^{E}_{\alpha})
\end{eqnarray*}
It well-known that under the condition of the lemma, $s_{\beta}, u_{\alpha}$ are $C^1$.

We have
\begin{eqnarray}
\label{der rcr} &&\partial_E(R_{\frac{\pi}{2} - s_{\beta}(E)}C^E_jR_{u_{\alpha}(E)} v )  \\
  &=& \partial_ER_{\frac{\pi}{2} - s_{\beta}(E)} ( C^E_j R_{u_{\alpha}(E)} v) + DR_{\frac{\pi}{2} - s_{\beta}(E)} (C^E_j R_{u_{\alpha}(E)} v) \partial_E C^E_j (R_{u_{\alpha}(E)} v)   \nonumber \\
  &&+ D(R_{\frac{\pi}{2} - s_{\beta}(E)}C^E_j)(R_{u_{\alpha}(E)} v) \partial_ER_{u_{\alpha}(E)}( v ) \nonumber
\end{eqnarray}
Here and the following, the derivatives of varies functions from $E$ to $\mathbb{P}\R^2$ are interpreted through identifying $\R/\pi\Z$ with $\mathbb{P}\R^2$ as 
\begin{eqnarray*} \theta \in \R/\pi\Z \mapsto \R\begin{bmatrix} \cos\theta \\ \sin \theta \end{bmatrix} \in \mathbb{P}\R^2 \end{eqnarray*}
Since $SL(2,\R)$ act $\mathbb{P}R^2$ through smooth, orientation preserving diffeomorphisms, for any $M \in SL(2,\R)$, any $\psi \in \mathbb{P}\R^2$, we have
\begin{eqnarray*}
DM(\psi) > 0
\end{eqnarray*}
It is shown in \cite{ADZ} ( see also \cite{WZ}) that 
\begin{eqnarray*}
\partial_ER_{\frac{\pi}{2} - s_{\beta}(E)} (\phi) \leq 0 \\
\partial_ER_{u_{\alpha}(E)}( \phi ) \leq 0
\end{eqnarray*}
for all $\phi \in \mathbb{P}\R^{2}$.
So the first term and the last term in \eqref{der rcr} is non-positive.

Since for any $\phi \in \mathbb{P}\R^2$, we have
\begin{eqnarray*}
DR_{\frac{\pi}{2} - s_{\beta}(E)}(\phi) = 1
\end{eqnarray*}
it remains to check that $ \partial_E C^E_j (R_{u_{\alpha}(E)} v)$ is uniformly bounded by a negative constant for all choice of $\alpha \in \mathcal{A}$,$1 \leq j \leq K$ and $E \in I_{E_0}$.

Denote $\phi = R_{u_{\alpha}(E)}v$.
It is well-known that for any $\psi \in \mathbb{P}\R^2$, we have
\begin{eqnarray*}
\partial_{E}C^{E}(\psi) \leq 0
\end{eqnarray*}
Then we have 
\begin{eqnarray*}
\partial_{E}C^{E}_{j}(\phi) = \sum_{i = 0}^{j-1} DC^{E}_i(C^{E}_{j-i}(\phi)) \partial_{E}C^{E}(C^{E}_{j-i-1}(\phi)) \leq DC^{E}_{j-1}(C^{E}(\phi))\partial_{E}C^{E}(\phi)
\end{eqnarray*}
When $\lambda$ is sufficiently large, $u_{\alpha}(E)$ can be made arbitrarily close to $0$ for all $E \in I_{E_0}$.
Then we can ensure that $\phi$ is close to $v$ so that
\begin{eqnarray*}
\partial_{E}C^{E}_{k}(\phi) \leq \frac{1}{2}DC^{E}_{k-1}(C^{E}(\phi))\partial_{E}C^{E}(v) 
\end{eqnarray*}
Straight-forward computation shows that the right hand is strictly negative. This completes the proof.

\end{proof}

Now we are going to show that the Lebesgue measure of $\overline{J}$ is small.

\begin{lemma} \label{lemma the closure of the spectrum}
 For any $0 < \gamma < \gamma' < \frac{1}{4}$, any $\gamma' < c < 2-3\gamma'$, there exists $C_3 > 0$, such that the following is true. For $\lambda$ sufficiently large, we define $J_n$ and parameters $ \bar{\lambda}_n, \zeta_n, \chi_n, M_n, N_n, \kappa_n$ that satisfy the conclusions in Lemma \ref{lemma choosing the parameters} with $\gamma, \gamma', c$. Define $J$ by \eqref{def of J n}. Then $ Leb(\overline{J}) \leq C_3 \lambda^{-\gamma}$
\end{lemma}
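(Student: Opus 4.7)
The plan is to estimate $Leb(\overline{J})$ via $Leb(\overline{J}) \leq \sum_{n\geq 0} Leb(J_n)$ (up to enlarging each closed set $J_n$ infinitesimally into an open neighborhood, which only perturbs the measure negligibly). Since $J_n = \bigcup_{\alpha,\beta\in\mathcal{A}_n,\,1\leq j\leq K} J(\alpha,\beta,j,\kappa_n)$, I first control each piece $J(\alpha,\beta,j,\kappa_n)$ and then sum over the combinatorial data.

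First I will establish a pointwise measure bound. The function
$\Phi_{\alpha,\beta,j}(E) \;=\; R_{\frac{\pi}{2}-s(A^E(\beta))}\,C^E_j\,R_{u(A^E(\alpha))}\begin{bmatrix}1\\0\end{bmatrix}$
is controlled by Lemma \ref{lemma C1} in the case $\alpha,\beta\in\mathcal{A}_0$; the same monotonicity/derivative argument (strict negativity of the $E$-derivatives of the stable/unstable directions of a Schrödinger cocycle and strict negativity of $\partial_E C^E_j$ on a near-vertical vector) generalizes without change to $\alpha,\beta\in\mathcal{A}_n$, yielding $|\partial_E \Phi_{\alpha,\beta,j}| > C_1$ on $I_{E_0}$. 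Consequently the preimage by $\Phi_{\alpha,\beta,j}$ of a $\kappa_n$-neighborhood of $\begin{bmatrix}0\\1\end{bmatrix}$ has Lebesgue measure at most $2\kappa_n/C_1$, and so $Leb(J(\alpha,\beta,j,\kappa_n)) \leq 2\kappa_n/C_1$.

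Next I will bound the combinatorics. Every word in $\mathcal{A}_n$ has length at most $\sup l_n \leq M_n \inf l_n \leq C''M_0 \inf l_n$ by \eqref{ratios of time intervals} and \eqref{item 4}, and uses letters from $\{1,\dots,k\}$, so $|\mathcal{A}_n| \leq K k^{\sup l_n}$. Combining with the pointwise bound,
\[
Leb(J_n) \;\leq\; \frac{2K^3}{C_1}\, k^{2\sup l_n}\,\kappa_n \;\leq\; \frac{2K^3}{C_1}\, \exp\bigl(\inf l_n\,(2C''M_0\log k - \eta_n\chi_n)\bigr),
\]
using $\kappa_n = \bar\lambda_n^{-\eta_n} = e^{-\eta_n\chi_n\inf l_n}$ from \eqref{tmp 6} and \eqref{tmp 30}. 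For $n=0$, since $\inf l_0$ and $\sup l_0$ are bounded in terms of the subshift, the bound is $\lesssim \kappa_0 \leq \bar\lambda_0^{-\eta_0}$; choosing $\eta_0$ strictly between $\gamma$ and $\gamma'$ in Lemma \ref{lemma choosing the parameters} and invoking $\bar\lambda_0 = \lambda/2$ from Lemma \ref{lemma initial data} gives $Leb(J_0) \lesssim \lambda^{-\gamma}$, which is the principal contribution.

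Finally I will handle the tail $\sum_{n\geq 1} Leb(J_n)$. By \eqref{item 2} one has $\chi_n \geq C'\chi_0 = C'\log\bar\lambda_0$, so for $\lambda$ large the coefficient $\eta_n\chi_n - 2C''M_0\log k$ becomes positive and bounded away from zero on any finite range of $n$, while the required lower bound $\eta_n \geq (2C''M_0\log k + 1)/(C'\chi_0)$ holds on a range $n\leq N^*(\lambda)$ with $N^*(\lambda)\to\infty$ as $\lambda\to\infty$. Together with $\inf l_n \geq \prod_{i<n}N_i \cdot \inf l_0$, which grows at least exponentially thanks to \eqref{tmp 2}, the exponent in the bound on $Leb(J_n)$ is at most $-\inf l_n$ for $n \leq N^*(\lambda)$; the summed tail is then dominated geometrically by $Leb(J_1) \leq e^{-\Theta(\inf l_1)} \lesssim \lambda^{-\gamma}$ for large $\lambda$, and for $n > N^*(\lambda)$ the super-exponential growth of $\inf l_n$ together with $\eta_n \geq \gamma/2^n$ still gives $\eta_n\chi_n\inf l_n \gg \sup l_n \log k$, producing a doubly summable tail. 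The main obstacle is precisely this balancing: showing that $\eta_n\chi_n$ beats $2C''M_0\log k$ uniformly in $n$ after taking $\lambda$ large. It is resolved by the joint choice of the $\eta_n$ (summable but decaying slowly enough that $\eta_n 2^n \geq \gamma$) and of $\bar\lambda_0$ (so large that $\eta_n C'\chi_0$ dominates), which together give $Leb(\overline J)\leq C_3\lambda^{-\gamma}$ with $C_3$ depending on the subshift through $|\mathcal{A}_0|$, $\inf l_0$, $\sup l_0$.
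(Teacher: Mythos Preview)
Your approach has a genuine gap in the tail estimate, and the paper's proof proceeds quite differently.

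The issue is the combinatorial factor. You bound $|\mathcal{A}_n|\lesssim k^{\sup l_n}$ and then compare $k^{2\sup l_n}$ against $\kappa_n=e^{-\eta_n\chi_n\inf l_n}$. Since $\sup l_n\le C''M_0\inf l_n$, this reduces to comparing $\eta_n\chi_n$ with $2C''M_0\log k$. But for \emph{fixed} $\lambda$ the $\chi_n$ are bounded above (by $\hat\chi$ in \eqref{hat chi chi n}), while $\eta_n\to 0$ because $\sum\eta_n<\infty$. Hence for all $n$ large enough (depending on $\lambda$) one has $\eta_n\chi_n<2C''M_0\log k$, the exponent becomes positive, and your bound on $Leb(J_n)$ blows up. Your attempted rescue (``super-exponential growth of $\inf l_n$ together with $\eta_n\ge\gamma/2^n$ still gives $\eta_n\chi_n\inf l_n\gg\sup l_n\log k$'') does not work, because $\sup l_n$ grows at exactly the same rate as $\inf l_n$ (their ratio is bounded by $C''M_0$), so the inequality again collapses to $\eta_n\chi_n\gg C''M_0\log k$, which fails. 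No complexity hypothesis is assumed on the subshift, so the crude bound on $|\mathcal A_n|$ cannot be improved in general; this route is blocked.

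There is also a secondary problem: $Leb(\overline{J})$ is not in general bounded by $\sum_n Leb(J_n)$. Enlarging each $J_n$ to an open neighborhood does not address this, since $\overline{\bigcup_n J_n}$ can be strictly larger than $\bigcup_n \overline{J_n}$.

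The paper avoids both difficulties by \emph{not} summing over $n$. Using the telescoping estimates \eqref{C1 bound of u n}, \eqref{C1 bound of s n} (i.e.\ $|u(A^E(\alpha))-u(A^E(rr_{n-1}(\alpha)))|,|s(A^E(\beta))-s(A^E(hd_{n-1}(\beta)))|\le\zeta_{n-1}$) together with Lemma~\ref{C0 close then angle close}, one shows that for $E\in J(\alpha,\beta,j,\kappa_n)\setminus\bigcup_{m<n}J_m$ the angle condition propagates down to level $0$: $E\in J(rr_0(\alpha),hd_0(\beta),j,\theta)$ with $\theta=\bar\lambda_0^{-\gamma}+c_5\sum_{m\ge0}\bar\lambda_0^{-2^m c}=O(\lambda^{-\gamma})$. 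Thus $J\subset\bigcup_{\alpha,\beta\in\mathcal A_0,\,1\le j\le K}\overline{J(\alpha,\beta,j,\theta)}$, a \emph{closed} set indexed by the \emph{fixed} finite alphabet $\mathcal A_0$, whose measure is $O(\theta)$ by Lemma~\ref{lemma C1}. This simultaneously handles the closure and eliminates the combinatorial blow-up.
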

\begin{proof}
By Lemma \ref{lemma choosing the parameters}, for all sufficiently large $\lambda$, we have \eqref{ratios of time intervals}  to \eqref{lower bound of norm} and \eqref{item 1} to \eqref{item 4} for all $n \geq 0$.

For any $n \geq 1$, for any $\alpha \in \mathcal{A}_n$, for any $E \in I_{E_0}$, if $\alpha = \omega_i \omega_{i+1} \cdots \omega_{l_n(\omega)-1}$ for some $\omega \in \Delta_{n} \bigcap \Delta_{(i)}$ and $1\leq i \leq K$, by definition we have
 \begin{eqnarray*} 
 A^{E}(\alpha) &=& B^{E}_n(\omega) \\
 A^{E}(hd_{n-1}(\alpha)) &=& B^{E}_{n-1}(\omega)\\
 A^{E}(rr_{n-1}(\alpha)) &=& B^{E}_{n-1}(T_{n-1}^{r_{n-1}(\omega)-1}(\omega))
 \end{eqnarray*}
 
Then for any $\alpha, \beta \in \mathcal{A}_n$, for any $1 \leq i \leq K$, for any $E \in J(\alpha, \beta, i, \kappa_n) \setminus \bigcup_{0 \leq j \leq n-1}J_j$, by \eqref{B E hyperbolic} we have
\begin{eqnarray*}
A^{E}(hd_{j}(\beta)), A^{E}(rr_{j}(\alpha)) \in SL(2,\R) \setminus SO(2,\R), \forall 0 \leq j \leq n-1
\end{eqnarray*}
For any $n \geq 1$, any $\alpha, \beta \in \mathcal{A}_n$, any $E \in I_{E_0} \setminus \bigcup_{0 \leq m \leq n-1} J_m$, by \eqref{C1 bound of u n}
 we know that
\begin{eqnarray}
\label{prev1}|u(A^{E}(\alpha)) - u(A^{E}(rr_{n-1}(\alpha)))|_{\R/\pi\Z} &<& \zeta_{n-1}<\bar{\lambda}_{0}^{-2^{n-1}c} 
\end{eqnarray}
by \eqref{C1 bound of s n}
\begin{eqnarray}
\label{prev2}|s(A^{E}(\beta)) - s(A^{E}(hd_{n-1}(\beta)))|_{\R/\pi\Z}  &<& \bar{\lambda}_{0}^{-2^{n-1}c}
\end{eqnarray}

Moreover by Lemma \ref{C0 close then angle close}, \eqref{prev1}, \eqref{prev2} for all $1 \leq m \leq n$, we see that for any $E \in J(\alpha, \beta, i, \kappa_n) \setminus \bigcup_{0 \leq j \leq n-1}J_j$ we have the following
\begin{eqnarray*}
\bar{\lambda}_0^{-\gamma}
&\geq& \kappa_n \geq \angle(  R_{\frac{\pi}{2}-s(A^E(\beta))}  C^E_i R_{u (A^E(\alpha))}\begin{bmatrix} 1 \\ 0 \end{bmatrix},\begin{bmatrix} 0 \\ 1 \end{bmatrix} ) \\
&\geq& 
\angle(  R_{\frac{\pi}{2}-s(A^E(hd_{n-1}(\beta)))}  C^E_i R_{u (A^E(rr_{n-1}(\alpha)))}\begin{bmatrix} 1 \\ 0 \end{bmatrix},\begin{bmatrix} 0 \\ 1 \end{bmatrix} ) - c_5 \bar{\lambda}_{0}^{-2^{n-1}c} \\
&&\cdots \\
&\geq& \angle(R_{\frac{\pi}{2} - s(A^{E}_{hd_{0}(\beta)})}C^{E}_iR_{u(A^{E}_{rr_{0}(\alpha)})}\begin{bmatrix} 1 \\ 0 \end{bmatrix},\begin{bmatrix} 0 \\ 1 \end{bmatrix}) - c_5(\bar{\lambda}_0^{-c} + \cdots + \bar{\lambda}_{0}^{-2^{n-1}c})
\end{eqnarray*}
Then 
\begin{eqnarray*}\angle(R_{\frac{\pi}{2} - s(A^{E}_{hd_{0}(\beta)})}C^{E}_iR_{u(A^{E}_{rr_{0}(\alpha)})}\begin{bmatrix} 1 \\ 0 \end{bmatrix},\begin{bmatrix} 0 \\ 1 \end{bmatrix})  \leq c_5(\bar{\lambda}_0^{-c} + \cdots + \bar{\lambda}_{0}^{-2^{n-1}c}) + \bar{\lambda}_0^{-\gamma}
\end{eqnarray*}
Denote $\theta = \sup_{n \geq 0} (c_5(\bar{\lambda}_0^{-c} + \cdots + \bar{\lambda}_{0}^{-2^{n-1}c}) + \bar{\lambda}_0^{-\gamma})$.
Then we have that $E \in J(rr_0(\alpha), hd_0(\beta), j, \theta)$.

Then for all $n \geq 0$, $\alpha, \beta \in \Delta_n$, any $1 \leq i \leq K$ we have the following
\begin{eqnarray*}
(J(\alpha, \beta, i, \kappa_n ) \setminus \bigcup_{0 \leq j \leq n-1}J_j )\subset  \overline{J(rr_0(\alpha), hd_0(\beta), i, \theta)}
\end{eqnarray*}
Take the unions of the above expression for all $n \geq 0$, $\alpha, \beta \in \Delta_n$, all $1 \leq i \leq K$, we obtain 
\begin{eqnarray*}
J \subset \bigcup_{\alpha, \beta \in \mathcal{A}_0, 1\leq j\leq K} \overline{J(\alpha, \beta, j, \theta)}
\end{eqnarray*}
The right hand side is a closed set and by Lemma \ref{lemma C1} and \eqref{C1 upper bound of critical matrices}, it is of measure $O(\theta)$.
Since $c > \gamma$, then there exists a constant $Q > 0$ depending only on $\gamma, \gamma', c$ such that $\theta < Q\lambda^{-\gamma}$ for all $\lambda$ sufficiently large.
This concludes the proof.
\end{proof}

\begin{proof}[Proof of Theorem \ref{real main theorem}]
For any $\gamma \in (0, \frac{1}{4})$, we can choose $\gamma' \in (\gamma, \frac{1}{4})$ and $c \in (\gamma', 2-3\gamma')$. Then when $\lambda$ is sufficiently large, Theorem \ref{real main theorem}
follows from Lemma \ref{lemma spectrum} and Lemma \ref{lemma the closure of the spectrum}. When $\lambda$ is small, we use the trivial bound $Leb(\Sigma_{v} \bigcap I_{E_0}) \leq Leb(I_{E_0}) \leq 2H$. After possibly enlarging $Q$, we obtain Theorem \ref{real main theorem}.
\end{proof}

\section{Proof of Theorem \ref{positive measure spectrum for large couplings theorem} and Theorem \ref{unbounded repetitions theorem}} \label{folklore theorems}

\subsection{Proof of Theorem \ref{positive measure spectrum for large couplings theorem}}
The construction of the required subshift follows closely the proof of Theorem 1 in \cite{ADZ}. We refer to \cite{ADZ} for some relevant lemmata.
Without loss of generality, let us assume that 
$B$ is a countably infinite set of potentials and $0 < \epsilon < 1$.
We will inductively define collections of finite words $S_n$, subshifts $\Omega_n$, closed subsets $\Sigma_{n,m}$ for $1\leq n \leq m$.

For $n=1$, we define
\begin{eqnarray}
S_1 &=& \{1,\cdots, k\}
\end{eqnarray}
We define $\Omega_1$ to be the two-sided infinite concatenations of the words in $S_1$.
We now pick any element $v_1 \in B$.
For each word $w \in S_1$, we denote the spectrum of the periodic potential associated to $v_1$ and $w$ by $\Sigma_{1,1}(w)$, and define
\begin{eqnarray}
\Sigma_{1,1} &=& \bigcup_{w \in S_1} \Sigma_{1,1}(w)
\end{eqnarray}

Assume $S_n$, $\Omega_n$, $\Sigma_{i,n}, \forall 1\leq i \leq n$ are constructed. We denote
\begin{eqnarray*}
S_n &=& \{w_{n,1},w_{n,2}, \cdots, w_{n,k_n} \}
\end{eqnarray*}
For any given integer $N_n \geq 1$, we define
\begin{eqnarray*}
S_{n+1} = \{ w_{n,1}w_{n,2}\cdots w_{n,k_n}w_{n,k}^{l} ; 1\leq k\leq k_n, N_n\leq l < N_n + N_n^{\frac{1}{2}\epsilon} \}
\end{eqnarray*}
and define $\Omega_{n+1}$ to be the two-sided infinite concatenation of the words in $S_{n+1}$.
It is direct to see that $\Omega_{n+1} \subset \Omega_{n}$.

We pick any element $v_{n+1} \in B \setminus \{v_1, \cdots, v_n\}$.
For each $1\leq i \leq n+1$,  for each $w \in S_{n+1}$, we denote the spectrum of the periodic potential associated to $v_i$ and $w$ by $\Sigma_{i,n+1}(w)$, and denote
\begin{eqnarray}
\Sigma_{i,n+1}  = \bigcup_{w \in S_{n+1}} \Sigma_{i,n+1}(w)
\end{eqnarray}
It is clear that $Leb(\Sigma_{n+1}) > 0$.
By a slightly modified version of Lemma 1 in \cite{ADZ}, we can choose a positive integer $N_{n}$ depending only on  $S_n$, $\Omega_n$, $\Sigma_{i,n}$ such that the following is true.
\begin{eqnarray}
Leb(\Sigma_{i,n} \setminus \Sigma_{i,n+1}) < Leb(\Sigma_{i,i}) 2^{-(n+1)}
\end{eqnarray}
for any $1 \leq i \leq n$.
We define $\Omega = \bigcap_n \Omega_n$.
For each $v \in B$, denote the spectrum associated to $\Omega$ and $v$ by $\Sigma$. For some $i \in \N$, we have $v = v_i$.
Then following \cite{ADZ}, we have
\begin{eqnarray}
\Sigma \supseteq \lim \sup_{n \to \infty} \Sigma_{i,n} 
\end{eqnarray}
Then by the same reasoning in \cite{ADZ}, we have $Leb(\Sigma) > \frac{1}{2}Leb(\Sigma_{i,i}) > 0$.
Following the proof of Lemma 2 in \cite{ADZ}, we can show that $\Omega$ is minimal and aperiodic.

It remains to show that when $N_n$ are properly chosen, we can ensure that $\Omega$ has required complexity function.

For any $n \geq 0$, define
\begin{eqnarray}
   M_n = \min \{ |w| ;  w \in S_n \}, 
   P_n = \max \{ |w| ; w \in S_n \}
\end{eqnarray}
It is direct to see that
\begin{eqnarray}
M_{n+1} &\geq& N_n M_n \\
P_{n+1} &\leq& (N_n + N_n^{\frac{1}{2}\epsilon})P_n \\
  S_{n+1} &=& N_n^{\frac{1}{2}\epsilon}|S_n|
\end{eqnarray}
Hence for any $n \geq 0$
\begin{eqnarray}
S_n &\lesssim& M_n^{\frac{1}{2}\epsilon} \\
\frac{P_{n+1}}{M_{n+1}} &\leq& (1 + N_n^{-1+\frac{1}{2}\epsilon})\frac{P_n}{M_n}
\end{eqnarray}
From the construction, we see that we can also ensure that
\begin{eqnarray}
\sum_{n\geq 0}^{\infty} N_{n}^{-1 + \frac{1}{2}\epsilon} < \infty
\end{eqnarray}
Then there exists $C > 0$ such that for any $n \geq 0$, we have
\begin{eqnarray}
P_n \leq C M_n
\end{eqnarray}

For any $L \in \N$, there exists $n \in \N$ such that $M_n \leq L < M_{n+1}$.
For any word $w$ of length $L$, there exists two words $w_1,w_2 \in S_{n+1}$, such that $w$ is a subword of the concatenation $w_1w_2$ and is not a subword of $w_1$. Assume
\begin{eqnarray}
w_1 = w_{n,1} \cdots w_{n,k_n} w_{n,i}^{l} \\
w_2 = w_{n,1} \cdots w_{n,k_n} w_{n,j}^{m} 
\end{eqnarray}
We have four possibilities:

(1) $w$ does not intersect $w_{n,1} \cdots w_{n,k_n} $. Then $w$ is a subword of $ w_{n,j}^{m}$. Then there are at most $|S_n|L$ possible choices of $w$;

(2) $w$ contains $w_{n,1} \cdots w_{n,k_n} $. Then $w$ is the concatenation of a suffix of $w_{n,i}^{l}$ (possibly empty),$w_{n,1} \cdots w_{n,k_n} $ and a prefix of $w_{n,j}^{m}$. In this case, there are at most $|S_n|^2L$ possible choices of $w$;

(3) $w$ intersect both $w_{n,1} \cdots w_{n,k_n}$ and $w_{n,j}^{m}$. Then $w$ is determined by a prefix of $w_{n,j}^{m}$ of length at most $L$. There are at most $|S_n|L$ possible choices of $w$;

(4) $w$ is contained in $w_{n,1} \cdots w_{n,k_n}$. Then there are at most $P_n|S_n|$ possibilities. Since $P_n \leq C M_n \leq C L$, we have at most $CL|S_n|$ possibilities.

Combining all three cases, we have
\begin{eqnarray}
p(L) \leq |S_n|L + |S_n|^2L + |S_n|L + C|S_n|L\lesssim L^{1+\epsilon}
\end{eqnarray}

This proves the theorem.

\subsection{Proof of Theorem \ref{unbounded repetitions theorem}}
Fix $E \in (-2+v(i), 2+v(i))$, then $A^{E}_i$ is an elliptic matrix.
Assume to the contrary that $E \notin \Sigma_{v}$.
Then we can take an open interval neighbourhood of $E$, denoted by $J$, such that 
$ J \subset (-2+v(i), 2+v(i)) \bigcap \Sigma_{v}^{c}$.
By Theorem \ref{johnson} the cocycle $A^{E}$ over $\Omega$ is Uniformly Hyperbolic. Thus we can define stable, unstable directions, denoted respectively by $s,u : \Omega \to \mathbb{P} \R^2$. After possibly reducing $J$, we can assume that for any $E' \in J$, we have $s(E'),u(E') : \Omega \to  \mathbb{P} \R^2$, and for any $\omega \in \Omega$, the function $s(\cdot, \omega), u(\cdot, \omega): J \to \mathbb{P} \R^2$ are $C^1$ ( in fact analytic )  and the $C^1$ norm of these functions are bounded uniform in $\omega \in \Omega$. We take any $\omega \in \Omega$ such that $\omega_0 = \cdots = \omega_{N-1} = i$, where $N$ will be chosen to be large. Denote $\omega' = T^{N}(\omega)$. Then $s(E', \omega') = (A^{E'}_i)^N s(E',\omega)$ for all $E' \in J$. Straightforward calculation shows that the $C^1$ norm of $s(\cdot, \omega')$ will be $\Theta(N)$. When $N$ is large, we have a contradiction. Hence $E \in \Sigma_{v}$. This proves the theorem.

\end{document}